\begin{document}

\title{A Kazhdan--Lusztig correspondence for %Braided Tensor Category Associated to
$\affvoa$}

\author{Thomas Creutzig\footnote{Department of Mathematics, University of Alberta. \href{mailto:creutzig@ualberta.ca}{creutzig@ualberta.ca}}
	\and David Ridout\footnote{School of Mathematics and Statistics, University of Melbourne.  \href{mailto:david.ridout@unimelb.edu.au}{david.ridout@unimelb.edu.au}}
	\and Matthew Rupert\footnote{Department of Mathematics and Statistics, Utah State University. \href{mailto:matthew.rupert@usu.edu}{matthew.rupert@usu.edu}}}

\date{}

\maketitle

\begin{abstract}
	The abelian and monoidal structure of the category of smooth weight modules over a non-integrable affine vertex algebra of rank greater than one is an interesting, difficult and essentially wide open problem. Even conjectures are lacking. This work details and tests such a conjecture for $\affvoa$ via a logarithmic Kazhdan--Lusztig correspondence.

	We first investigate the representation theory of $\qgrp$, the unrolled restricted quantum group of $\slthree$ at fourth root of unity. In particular, we analyse its finite-dimensional weight category, determining Loewy diagrams for all projective indecomposables and decomposing all tensor products of irreducibles. Our motivation is that this category is conjecturally braided tensor equivalent to a category of $\singvoa$-modules. Here, $\singvoa$ is an orbifold of the octuplet vertex algebra $\octvoa$ of Semikhatov, the latter being the natural $\slthree$-analogue of the well known triplet algebra. Moreover, $\singvoa$ is the parafermionic coset of the affine vertex algebra $\affvoa$.

	We formulate an explicit conjecture relating the representation theory of $\singvoa$ and $\qgrp$ and work out the resulting structures of the corresponding $\affvoa$-modules. In particular, we obtain conjectural Loewy diagrams for the latter's projective indecomposables and decompositions for the fusion products of its irreducibles. These products coincide with those recently computed via Verlinde's formula. Finally, we give analogous results for $\octvoa$.
\end{abstract}

\tableofcontents

\onehalfspacing

\section{Introduction}

Two excellent sources of braided tensor categories are the representation categories of modules of quantum groups and of vertex operator algebras. Here, we are mostly interested in categories that are not semisimple. Besides being interesting in their own right, such categories give rise to rich invariants of $3$-manifolds \cite{CGP} and they naturally emerge from quantum field theories \cite{CDGG}. In this work, we study the case of the unrolled restricted quantum group of $\slthree$ at fourth root of unity, $\qgrp$, and its connection to the simple affine vertex operator algebra of $ \mathfrak{sl}_3$ at level $-\frac{3}{2}$, $\affvoa$. Let us start by reviewing the state of the art.

\subsection{Affine vertex algebras at admissible level}

Let $\mfg$ be a finite-dimensional complex simple Lie algebra. Then, to any $k \in \CC$, one associates the simple affine vertex algebra of $\mfg$ at level $k$, which we denote by $L_k(\mfg)$. If $k$ is a non-negative integer, then $L_k(\mfg)$ is rational and lisse, hence its module category is a semisimple modular tensor category \cite{HUang-MTC}. It is a major research goal to thoroughly understand the module categories of $L_k(\mfg)$ for general $k$. A relatively accessible, though still extremely challenging, case is when $k$ is an admissible level \cite{KacMod88}.

This goal entails first classifying the irreducible modules, second understanding extensions between them and third establishing a vertex tensor category structure and determining its properties. There are of course different types of categories of modules that one can study. The simplest one is the category of ordinary modules, these being the positive-energy modules whose conformal weight spaces are finite-dimensional. For $k$ admissible, this category is a semisimple vertex tensor category \cite{Ara-adm, CHY} for which rigidity is also mostly established \cite{C-adm, CKL-B, CGL}.

It turns out that when $k$ is not a non-negative integer, $L_k(\mfg)$ usually has many modules that are not positive-energy or have infinite-dimensional conformal weight spaces. Among other reasons, one must then extend the category of ordinary modules to include some of these in order to obtain a modular-invariant partition function \cite{KacMod88,CR1,CR2}. There is therefore significant current effort to understand these extended categories of $L_k(\mfg)$-modules, particularly those that include the so-called relaxed highest-weight modules \cite{FeiEqu97,RidRel15}, see for example \cite{A,RidBos14,AraWei16,RidAdm17,Ada-log,KR1,CreCos18,AdaFus19,KR2,AllBos20,FutSim20,FK,AdaRea20,FehCla20,FutAdm21,KRW}.

In the case $\mfg = \mathfrak{sl}_2$, the irreducible positive-energy modules were classified in \cite{AM-sl2}. Reducible positive-energy modules were subsequently considered in many works, including \cite{GabFus01,RidFus10,CR1,CR2,KR1}. Moreover, modules that are not positive-energy are often obtained by applying spectral flow functors to those that are. In fact, all irreducible weight modules with finite-dimensional weight spaces can be obtained in this fashion \cite{FutCla01}. Non-trivial extensions between such spectrally flowed modules were introduced in \cite{GabFus01,RidFus10} and a general construction of Adamovi\'{c} \cite{Ada-log} covers all the indecomposable projective modules \cite{ACK} (in the category of weight modules).

Beyond $\mfg = \mathfrak{sl}_2$, the structures of indecomposable projective modules are currently unknown. There are however many conjectural correspondences between categories of modules of certain quantum groups and vertex operator algebras. These are often called logarithmic Kazhdan--Lusztig correspondences and we will review them in a moment. One such correspondence relates the unrolled restricted quantum group $\qgrp$, at a fourth root of unity, and a vertex operator algebra related to $\affvoa$ \cite{CR}. (This correspondence is in fact a variant of the conjectured  logarithmic Kazhdan--Lusztig correspondence between Feigin--Tipunin algebras and restricted quantum groups \cite{FT}.) Since the representation theory of quantum groups is reasonably accessible, this is a promising direction to deduce otherwise unimaginable structural conjectures for indecomposable projective $\affvoa$-modules.

Eventually, one would not only like to understand the linear categories of an affine vertex algebra, but also their monoidal structures. For example, one would like to obtain fusion rules for the irreducible modules. Verlinde's formula \cite{VerFus88} is a particularly nice way to obtain (Grothendieck) fusion rules, though it is only established for rational and lisse vertex operator algebras \cite{Hu-ver}. There is nevertheless a generalised Verlinde conjecture \cite{CreLog13,RidVer14} that has been successfully applied to $L_k(\sltwo)$ at admissible levels \cite{CR1, CR2}. This conjectural formula has been recently lifted to $\affvoa$ in a companion paper \cite{KRW}, so that the tensor product decompositions of $\qgrp$-modules and the fusion rules of the corresponding $\affvoa$-modules may be compared.

\subsection{Logarithmic Kazhdan--Lusztig correspondences}

The category of ordinary modules of the affine vertex operator algebra $L_k(\mfg)$, with $\mfg$ simple and $k+h^\vee \notin \QQ_{\ge0}$, is equivalent as a braided tensor category to the category of finite-dimensional modules of Lusztig's quantum group $\Uq{\mfg}$ at $q = \ee^{\pi \ii/\ell(k+h^\vee)}$, where $h^\vee$ and $\ell$ are the dual Coxeter and lacing numbers, respectively, of $\mfg$.  This is the original Kazhdan--Lusztig correspondence \cite{KL1, KL2, KL3, KL4} established in the 1990s.

Related progress on the mathematics of logarithmic conformal field theory was made in the mid-2000s \cite{Feigin:2005zx, Feigin:2005xs, Feigin:2006iv, Feigin:2006xa}. These papers conjectured a logarithmic Kazhdan--Lusztig correspondence between the representation categories of the triplet algebras $\triplet$ (parameterised by an integer $p \ge 2$) and the restricted quantum group $\Uqbar{\sltwo}$ at $q = \ee^{\pi \ii/p}$. It was later noticed that the correspondence is in fact between the triplet algebras and a quasi-Hopf algebra whose underlying algebra is the quantum group. One calls this a quasi-Hopf modification of the quantum group \cite{Gainutdinov:2015lja,CGR}. In this instance, the quantum group category was quickly understood and it thus provided reliable conjectures about $\triplet$-module categories, in particular their abelian structures and fusion rules. It then took many works \cite{AM-triplet,AM-singlet,KS-11,Nagatomo:2009xp,TsuExt13,CGR,mcrae2021structure,CLR,GN} to completely settle these conjectures.

A related conjectural logarithmic Kazhdan--Lusztig correspondence involves the unrolled restricted quantum group $\UHbar{\sltwo}$ at $q = \ee^{\pi \ii/p}$ and the singlet algebra $\singlet$ \cite{Costantino:2014sma}, the latter being the $U(1)$-orbifold of $\triplet$. This in turn means that $\triplet$ is a simple current extension of $\singlet$ \cite{RidMod13}. The corresponding relation between the two quantum groups $\Uqbar{\sltwo}$ and $\UHbar{\sltwo}$ is sometimes called uprolling and was first worked out in \cite{CGR}. Moreover, a full understanding of the tensor category of $\singlet$ was recently achieved in \cite{CMY-singlet, CKMY-singlet}.

This unrolled/singlet correspondence is of significant interest. One reason is that very interesting vertex algebras, called $\mathcal B_p$-algebras \cite{Creutzig:2013pda}, may be realised as simple current extensions of $\singlet$ and a rank-one Heisenberg vertex algebra. For $p=2$, one gets the $\beta\gamma$-vertex algebra \cite{RidSL210}; for $p=3$, one gets $L_{-4/3}(\sltwo)$ \cite{Ada-sl2}; while for larger $p$, one gets certain subregular W-algebras of $\mathfrak{sl}_{p-1}$ \cite{Adamovic:2020lvj}. We remark that the $\mathcal B_p$-algebras are important in physics as they arise as chiral algebras of certain Argyres--Douglas theories \cite{Beem:2017ooy, C1, Adamovic:2020lvj}, these being a type of four-dimensional superconformal quantum field theory.

The quantum group correspondence was then used to give a complete, but of course still conjectural, description of the category of smooth weight modules of all the $\mathcal{B}_p$-algebras \cite{ACKR}. In particular, fusion rules were conjectured via Verlinde's formula and, even better, the normalised entries of the modular S-transformation corresponding to the generalised Verlinde formula were found to coincide with the normalised modified traces of open Hopf links, as was already anticipated in \cite{CMR, Creutzig:2016fms, Gainutdinov:2016qhz}. In summary, these logarithmic Kazhdan--Lusztig correspondences between quantum group categories and categories of modules of vertex algebras, both associated to $\sltwo$, have been invaluable for better understanding the representation theory of the vertex algebras. It is thus reasonable to extend this program to higher ranks.

In this direction, there is a natural generalisation of the triplet algebras called the Feigin--Tipunin algebras \cite{FT}. They are parameterised by a simply laced simple Lie algebra $\mfg$ and an integer $p\ge2$. The corresponding quantum groups are expected to be quasi-Hopf modifications of the restricted quantum groups $\Uqbar{\mfg}$ at $q=\ee^{\pi\ii/p}$. The unrolled version should moreover correspond to the orbifold of the Feigin--Tipunin algebra by the maximal torus of the Lie group of $\mfg$. Unfortunately, very few results have thus far been established for these vertex algebras, see \cite{SemVir11, SemNot13, CM2, Sugimoto:2021lok, AdaPar20, sugimoto2021simplicity}.

As in the case of $\mfg = \sltwo$, there exist extensions of the tensor product of these Feigin--Tipunin orbifolds by Heisenberg vertex algebras of the same rank as $\mfg$ \cite{C2}. These extensions also arise in physics as chiral algebras of Argyres--Douglas theories \cite{Buican:2017uka, C2}. The very first higher-rank example is $\mfg = \slthree$ and $p=2$. In this case, the extension was identified by Adamovi\'{c} as $\affvoa$ \cite{A}. It is also worth pointing out that a different extension, obtained by first tensoring with a rank-one Heisenberg vertex algebra, gives the small $N=4$ superconformal algebra at central charge $-9$.

The quantum group associated to the orbifold $\singvoa$ of the Feigin--Tipunin algebra for $\slthree$ and $p=2$ is $\qgrp$. General results for these unrolled restricted quantum groups and their uprollings were obtained in \cite{GP, CGP, Costantino:2014sma, R, CR}, but the explicit structures of the projective indecomposables and decompositions of tensor products were not known. We thus decided to study this example in order to compare our findings with the recent results of \cite{KRW} on the representation theory of $\affvoa$, via the conjectural logarithmic Kazhdan--Lusztig correspondence.

\subsection{Overview}

We begin in \cref{sec:bgrnd} by recalling the definition of the unrolled restricted quantum group $\qgrp$ and outlining some of the broader features of its representation theory, following \cite{R}.  We also outline the theory of Loewy series and diagrams to fix notation and explain the subtle relation between this theory and duality.

\cref{qgrpmod} is devoted to a detailed study of the category of finite-dimensional weight $\qgrp$-modules. The simple objects of this category are realised as irreducible quotients $\irred{\lambda}$ of Verma modules $\verma{\lambda}$. It follows from \cite[Propositions~4.4 and 4.6]{R} that the irreducibility of $M_{\lambda}$ is determined by the ``typicality'' of the scalars $\lambda_k=\killing{\lambda+\rho}{\alpha_k}$, $k=1,2,3$, where $\alpha_1$ and $\alpha_2$ are the simple roots of $\slthree$ and $\alpha_3 = \rho = \alpha_1 + \alpha_2$.  We call $\lambda_k \in \CC$ typical if $\lambda_k \notin 2\ZZ+1$ and a weight $\lambda \in \mfh^*$ typical if all $\lambda_j$ are typical.  Weights that are not typical are called atypical and are further subdivided by the degree of atypicality, this being the number of $\lambda_k$ that are atypical (in $2\ZZ+1$).

We find that the structures and invariants of the irreducibles $\irred{\lambda}$, Vermas $\verma{\lambda}$ and projective covers $\proj{\lambda}$ are captured by four different classes of atypicality for the weight $\lambda$:
\begin{itemize}
	\item $\lambda$ typical (atypical of degree $0$);
	\item $\lambda$ atypical of degree $1$;
	\item $\lambda$ atypical of degree $2$ with $\lambda_3$ atypical;
	\item $\lambda$ atypical of degree $2$ with $\lambda_3$ typical.
\end{itemize}
For example, the dimensions of the $L_{\lambda}$ corresponding to these four classes are $8$, $4$, $3$ and $1$, respectively.  This, along with the characters of the $L_{\lambda}$, is the content of \cref{irreduciblecharacter}.

From here, we establish in \cref{LoewyM} the Loewy diagrams that summarise the structures of the reducible Verma modules, see \cref{fig:LoewyM}. The quite non-trivial generalisation for the indecomposable projectives is reported in \cref{thm:LoewyP,fig:LoewyP}. Both results again split into cases along the lines of the atypicality classes introduced above. This identification of projectives is moreover extremely helpful for decomposing many of the tensor products of irreducible $\qgrp$-modules, this being the subject of \cref{tensorsec}.

The implications of these results for the representation theories of the vertex operator algebras $\singvoa$, $\octvoa$ and $\affvoa$ is discussed in \cref{sec:KL}. The latter here serves as a partial check as some features of the representation theory of $\affvoa$ have been recently established in \cite{KRW}. These results are reviewed in \cref{sec:KLaff}.  However, the conjectural logarithmic Kazhdan--Lusztig correspondence involves the Feigin--Tipunin orbifold $\singvoa$, not $\affvoa$, so \cref{sec:KLcoset} details the relationship between these two vertex operator algebras. In particular, we determine the branching rules of the irreducible $\affvoa$-modules, that is their decomposition into modules of $\singvoa \otimes H$ (where $H$ denotes a rank-$2$ Heisenberg algebra).

The Kazhdan--Lusztig correspondence itself is the topic of \cref{sec:KLcorr}, see \cref{conj:bte}.  This immediately results in conjectural Loewy diagrams for the indecomposable projective $\singvoa$-modules, see \cref{fig:LoewyE}.  To lift these to $\affvoa$, we use the theory of vertex algebra extensions to construct an exact monoidal functor of a suitable subcategory of $\singvoa \otimes H$-modules to the weight category of $\affvoa$.  This is detailed in \cref{sec:KLrecon} with the conjectured Loewy diagrams for the indecomposable projective $\affvoa$-modules reported in \cref{fig:LoewyAff}.  This gives the first predictions for the structures of these projective covers and will be invaluable guides for their subsequent construction and analysis.

We also test the tensor part of the conjectured equivalence by using the tensor product decompositions for $\qgrp$, given in \cref{tensorsec}, to predict fusion rules for $\affvoa$ in \cref{sec:KLfusion} and checking that they descend to the Grothendieck fusion rules recently calculated in \cite{KRW}.  This is a highly non-trivial check of both our \cref{conj:bte} and the standard Verlinde conjecture for logarithmic models \cite{CreLog13,RidVer14}.

To conclude, we again use vertex algebra extension theory to study the representation theory of $\octvoa$.  This Feigin--Tipunin algebra was dubbed the octuplet algebra by Semikhatov \cite{SemVir11,SemNot13} as it is generated by eight primary fields of conformal weight four.  As an $\slthree$ generalisation of the triplet algebras, it is expected to be $C_2$-cofinite.  We verify in \cref{sec:KLoctuplet}, again assuming \cref{conj:bte}, that there are finitely many irreducibles (up to isomorphism) and that all are ordinary.  Predictions for their fusion rules are deduced and Loewy diagrams for the projective indecomposables are proposed in \cref{fig:LoewyOct}.

\subsection*{Acknowledgements}

The work of TC is supported by NSERC Grant Number RES0048511.
DR's research is supported by the Australian Research Council Discovery Projects DP160101520 and DP210101502, as well as an Australian Research Council Future Fellowship FT200100431.

\section{Preliminaries} \label{sec:bgrnd}

In this section, the unrolled restricted quantum group $\qgrp$ is defined and various results are recalled for later use in identifying the Loewy diagrams of its reducible Verma modules and their projective covers.

\subsection{The quantum group $\qgrp$}

Fix a choice of Cartan subalgebra $\csub \subset \slthree$.  The simple roots are denoted by $\alpha_i \in \csub^*$, $i=1,2$, and the highest root is $\alpha_3 = \alpha_1 + \alpha_2$.  The Killing form $\killing{\blank}{\blank}$ is normalised so that $\killing{\alpha_i}{\alpha_j} = A_{i,j}$, $i=1,2$, where $A = \begin{psmallmatrix} 2&-1\\-1&2 \end{psmallmatrix}$ is the Cartan matrix.  The dual form on $\csub^*$ will also be denoted by $\killing{\blank}{\blank}$.  Fix also a lattice $L \subset \csub^*$ such that $Q \subseteq L \subseteq P$, where $P$ and $Q$ are the weight and root lattices of $\slthree$, respectively.

Here and below, $\ii$ denotes $\sqrt{-1} \equiv \ee^{\pi \ii/2}$.

\begin{definition} \label{Uisl3}
	The \emph{unrolled restricted quantum group} $\qgrp$ is the complex Hopf algebra with generators $X_{\pm j}$, $H_j$ and $K_{\gamma}$, for $j=1,2$ and $\gamma \in L$, and relations
	\begin{subequations}
		\begin{equation} \label{eq:Uisl3rels}
			\begin{aligned}
				K_0 &= 1, & K_{\gamma_1}K_{\gamma_2} &= K_{\gamma_1+\gamma_2}, & K_{\gamma}X_{\pm j}K_{-\gamma} &= \ii^{\pm \langle \gamma, \alpha_j \rangle} X_{\pm j}, \\
				[X_{j},X_{-j'}] &= \delta_{j,j'} \frac{K_{\alpha_{j}}-K_{-\alpha_{j}}}{2\ii}, & X_{\pm j}^2 &= 0, & (X_{\pm1} X_{\pm2})^2 &= (X_{\pm2} X_{\pm1})^2, \\
				[H_{j},X_{\pm j'}] &= \pm A_{j,j'}X_{\pm j'}, & [H_{j},H_{j'}] &= 0, & [H_j,K_{\gamma}] &=0.
			\end{aligned}
		\end{equation}
		(The notation $K_j=K_{\alpha_j}$ and $K_{-\gamma}=K_{\gamma}^{-1}$ will also be used.)  The coproduct, counit and antipode are given by
		\begin{equation} \label{eq:Uisl3hopf}
			\begin{aligned}
				\Delta(K_{\gamma}) &= K_{\gamma} \otimes K_{\gamma}, & \epsilon(K_{\gamma}) &= 1, &  S(K_{\gamma}) &= K_{-\gamma},\\
				\Delta(X_j) &= X_j \otimes K_j + 1 \otimes X_j, & \epsilon(X_j) &= 0, & S(X_j) &= -X_jK_j^{-1},\\
				\Delta(X_{-j}) &= X_{-j} \otimes 1 + K_j^{-1} \otimes X_{-j}, & \epsilon(X_{-j}) &= 0, & S(X_{-j}) &= -K_jX_{-j}\\
				\Delta(H_j) &= 1 \otimes H_j + H_j \otimes 1, & \epsilon(H_j) &= 0, & S(H_j) &= -H_j,
			\end{aligned}
		\end{equation}
	\end{subequations}
	respectively.
\end{definition}

Normally, one would augment the relations \eqref{eq:Uisl3rels} with Serre relations.  However, these become trivial when the quantum group parameter $q$ is specialised to $\ii$.

The subalgebra of $\qgrp$ generated by the ``simple root vectors'' $X_{\pm1}$ and $X_{\pm2}$ will be denoted by $\qgrpnilpm$.  As in \cite[Section~6.2.3]{KS}, the root vectors associated to $\pm\alpha_3$ may be chosen to be
\begin{equation}
	X_3 = -X_1 X_2 - \ii X_2 X_1 \qquad \text{and} \qquad X_{-3} = -X_{-2} X_{-1} + \ii X_{-1} X_{-2}.
\end{equation}
It is easy to check from \eqref{eq:Uisl3rels} that $X_{\pm3}^2 = 0$.  The following result is easy to establish.

\begin{proposition}\label{PBW}
	The set $\set{X_{\pm1}^{n_1} X^{n_3}_{\pm 3} X^{n_2}_{\pm 2}}{n_1, n_2, n_3 = 0, 1}$ is a vector space basis for $\qgrpnilpm$.
\end{proposition}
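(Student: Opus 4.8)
The plan is to prove the statement for $\qgrpnilp$; the case of $\qgrpnilm$ then follows from the algebra isomorphism $\qgrpnilp \to \qgrpnilm$ determined by $X_1 \mapsto X_{-1}$, $X_2 \mapsto X_{-2}$, which respects the relevant relations of \eqref{eq:Uisl3rels} and carries $X_3$ to a nonzero scalar multiple of $X_{-3}$. Since $\qgrpnilp$ is generated by $X_1$ and $X_2$, the first step is to record the straightening relations that reorder an arbitrary word into the order $X_1, X_3, X_2$. Beyond $X_{\pm3}^2 = 0$ (already noted above), the definition $X_3 = -X_1X_2 - \ii X_2 X_1$ together with $X_j^2 = 0$ yields, by direct computation,
\[
X_2 X_1 = \ii X_1 X_2 + \ii X_3, \qquad X_3 X_1 = -\ii X_1 X_3, \qquad X_2 X_3 = -\ii X_3 X_2.
\]

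Next I would prove spanning. Let $V \subseteq \qgrpnilp$ be the span of the eight monomials $X_1^{n_1} X_3^{n_3} X_2^{n_2}$ with $n_1,n_2,n_3 \in \{0,1\}$. I claim $V$ is a left ideal, hence equals $\qgrpnilp$ as it contains $1$. Since $X_1$ and $X_2$ generate, it suffices to check $X_1 V \subseteq V$ and $X_2 V \subseteq V$, which is a short finite verification: each product $X_i \cdot X_1^{n_1}X_3^{n_3}X_2^{n_2}$ is rewritten into the span of the eight monomials using the three straightening relations and $X_i^2 = 0$. The only mildly involved cases are $X_2 \cdot X_1 X_3$ and $X_2 \cdot X_1 X_3 X_2$, which reduce to $X_1 X_3 X_2$ and $0$ respectively after applying all three rules. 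This already gives $\dim \qgrpnilp \le 8$.

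It remains to prove linear independence, equivalently $\dim \qgrpnilp \ge 8$. Conceptually I would use the $\csub^*$-grading in which $X_j$ has weight $\alpha_j$ and $X_3$ has weight $\alpha_3$: the six weights $0$, $\alpha_1$, $\alpha_2$, $2\alpha_1+\alpha_2$, $\alpha_1+2\alpha_2$, $2\alpha_1+2\alpha_2$ each support a single monomial, so independence in those degrees is immediate once the monomials are known to be nonzero, and the sole coincidence is in weight $\alpha_3 = \alpha_1+\alpha_2$, shared by $X_3$ and $X_1 X_2$. Thus everything reduces to showing that the top monomial $X_1 X_3 X_2$ is nonzero and that $X_3$ and $X_1 X_2$ are not proportional (equivalently, that $X_1 X_2$ and $X_2 X_1$ are independent).

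These non-degeneracies cannot be read off from the relations alone and must be detected by a module; this is the real content of the proof, and the step I expect to be the main obstacle. I would test the monomials on a Verma module $\verma{\lambda}$ of $\qgrp$, induced from a one-dimensional character of the positive Borel subalgebra, on which $\qgrpnilm$ acts freely on the cyclic vector $v_\lambda$. For $\lambda$ typical, $\verma{\lambda}$ is irreducible of dimension eight by \cite{R}, so the eight vectors $X_{-1}^{n_1}X_{-3}^{n_3}X_{-2}^{n_2} v_\lambda$ — which span $\verma{\lambda}$ by the spanning step applied to $\qgrpnilm$ through the isomorphism above — must form a basis; hence the eight negative monomials are independent in $\qgrpnilm$, and the eight positive monomials in $\qgrpnilp$. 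Combined with spanning, this identifies the claimed basis. Everything apart from this lower bound is bookkeeping with the straightening relations; one could alternatively package the spanning and reduction via Bergman's Diamond Lemma for the presentation on $X_1, X_3, X_2$ with the relations above, ordered by length and then lexicographically with $X_1 < X_3 < X_2$, but the lower bound $\dim \qgrpnilp \ge 8$ would still have to be supplied by a module as here.
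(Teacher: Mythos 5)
The paper gives no proof of this \lcnamecref{PBW} (it is dismissed as ``easy to establish''), so your proposal must stand on its own. Most of it does: the straightening relations $X_2X_1 = \ii X_1X_2 + \ii X_3$, $X_3X_1 = -\ii X_1X_3$ and $X_2X_3 = -\ii X_3X_2$ are correct consequences of $X_{\pm j}^2=0$ and the definition of $X_{\pm3}$ (with $X_{\pm3}^2=0$ encoding $(X_{\pm1}X_{\pm2})^2=(X_{\pm2}X_{\pm1})^2$); the spanning argument is complete; the transfer to $\qgrpnilm$ via an isomorphism sending $X_3$ to $\ii X_{-3}$ is fine; and the reduction of linear independence, via the root-lattice grading, to the two statements $X_1X_3X_2 \ne 0$ and $X_3 \notin \CC\, X_1X_2$ is a genuinely useful observation.

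The gap is in the lower bound, precisely the step you flag as ``the real content of the proof''. You justify $\dim\qgrpnilm \ge 8$ by quoting that a typical Verma module ``is irreducible of dimension eight by \cite{R}''. But $\verma{\lambda}$ is by construction free of rank one over $\qgrpnilm$, so $\dim\verma{\lambda} = \dim\qgrpnilm$: asserting that the typical Verma module is eight-dimensional \emph{is} the assertion you are trying to prove. In this paper the basis \eqref{eq:VermaBasis} and the character \eqref{eq:VermaChar} are \emph{derived from} \cref{PBW}, and \cite[Propositions~4.4 and 4.6]{R} (quoted here as \cref{irredM}) supply only the irreducibility criterion, which says nothing about the dimension; irreducibility of $\verma{\lambda}$ is compatible with $\dim\qgrpnilm$ being any value between $1$ and $8$. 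To close the argument you must actually exhibit a module: for example, write down explicit matrices for an $8$-dimensional candidate typical module, verify the relations \eqref{eq:Uisl3rels}, and check that $X_{-1}X_{-3}X_{-2}v_{\lambda} \ne 0$ and that $X_{-3}v_{\lambda}$, $X_{-1}X_{-2}v_{\lambda}$ are independent (the computations in the proof of \cref{irreduciblecharacter} show what such checks look like); or run the Diamond Lemma on a presentation of all of $\qgrp$ (not merely of $\qgrpnilpm$) to obtain the triangular decomposition; or cite the PBW basis of the restricted quantum group of $\slthree$ at $q=\ii$ directly from the quantum-group literature. As written, the key non-degeneracy is supplied by a citation that is circular relative to the logical order of this paper.
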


\subsection{Finite-dimensional weight $\qgrp$-modules}

To discuss the representation theory of $\qgrp$, it will be convenient to identify the generators $H_1$ and $H_2$ with the simple coroots of $\slthree$, so that $\spn \{H_1,H_2\} = \csub$.

\begin{definition}\label{wt}
	Given a $\qgrp$-module $V$, denote by $V(\lambda)$ the simultaneous eigenspace of $H_1$ and $H_2$ with weight $\lambda \in \csub^*$:
	\begin{equation}\label{wtspace}
		V(\lambda) = \set{v \in V}{H_j v = \lambda(H_j) v,\ j=1,2}.
	\end{equation}
	A $\qgrp$-module $V$ is called a \emph{weight module} if it is the direct sum of its weight spaces $V(\lambda)$, $\lambda \in \csub^*$, and every $K_{\gamma}$, $\gamma \in L$, acts on each $V(\lambda)$ as multiplication by $\ii^{\killing{\gamma}{\lambda}}$.  A non-zero element of $V(\lambda)$ is called a \emph{weight vector} of weight $\lambda$.  The category of all finite-dimensional weight $\qgrp$-modules will be denoted by $\qgrpcat$.
\end{definition}

Note that the category $\qgrpcat$ is insensitive to the choice of lattice $L$ made for $\qgrp$ (see \cref{Uisl3}), because the action of the $K_{\gamma}$, $\gamma \in L$, in $\qgrpcat$ is completely determined by the action of $H_1$ and $H_2$.

\begin{definition}
	The \emph{character} of a module $V$ in $\qgrpcat$ is the following formal series in $z$:
	\begin{equation} \label{eq:defchar}
		\ch{V} = \sum_{\lambda \in \csub^*} \dim V(\lambda) \, z^{\lambda}.
	\end{equation}
\end{definition}

\begin{definition}
	A weight vector in a $\qgrp$-module is said to be \emph{maximal} if it is annihilated by $X_1$ and $X_2$.  Any $\qgrp$-module generated by a single maximal vector is called a \emph{highest-weight module} and the weight of the generating maximal vector is called its \emph{highest weight}.
\end{definition}

As usual, one may define the Verma $\qgrp$-module $\verma{\lambda}$, $\lambda \in \csub^*$, as the (unique up to isomorphism) universal highest-weight module generated by a maximal vector of weight $\lambda$.  It has a unique irreducible quotient which we denote by $\irred{\lambda}$.  Moreover, $\verma{\lambda}$ is free as a $\qgrpnilm$-module so \cref{PBW} gives the basis
\begin{equation} \label{eq:VermaBasis}
	\set{X_{-1}^{n_1} X_{-3}^{n_3} X_{-2}^{n_2} m_{\lambda}}{n_1, n_2, n_3 = 0,1},
\end{equation}
where $m_{\lambda}$ is any vector of weight $\lambda$.  The character of a Verma module is thus given by
\begin{equation} \label{eq:VermaChar}
	\ch{\verma{\lambda}}
	= \sum_{n_1=0}^1 \sum_{n_2=0}^1 \sum_{n_3=0}^1 z^{\lambda - n_1 \alpha_1 - n_2 \alpha_2 - n_3 \alpha_3}
	= z^{\lambda} \prod_{i=1}^3 (1+z^{-\alpha_i}).
\end{equation}
It moreover follows that both $\verma{\lambda}$ and $\irred{\lambda}$ belong to the finite-dimensional weight category $\qgrpcat$, for all $\lambda \in \csub^*$.  As noted in \cite[Section~4.2]{R}, $\qgrpcat$ has enough projectives.  In particular, $\irred{\lambda}$ and $\verma{\lambda}$ have a common projective cover in $\qgrpcat$, which we denote by $\proj{\lambda}$.

The question of when $\verma{\lambda}$ is irreducible was decided in \cite{R} in terms of the notion of typical weights.  Let $\rho$ denote the Weyl vector of $\slthree$.

\begin{definition} \label{typ}
	For each $\lambda \in \csub^*$, let
	\begin{equation} \label{eq:lk}
		\lambda_j = \killing{\lambda+\rho}{\alpha_j}, \quad j=1,2,3,
	\end{equation}
	and note that $\lambda_3 = \lambda_1 + \lambda_2$.  The index $\lambda_j$ is said to be \emph{atypical} whenever $\lambda_j \in 2\ZZ+1$ and \emph{typical} otherwise.  If all of the $\lambda_j$, $j=1,2,3$, are typical, then the weight $\lambda \in \csub^*$ is said to be \emph{typical}.  For $n=0,1,2$, a weight $\lambda$ with $n$ atypical indices $\lambda_j$ is said to be \emph{atypical of degree $n$}.
\end{definition}

For convenience, we illustrate the atypical weights in \cref{fig:atypwts}.
\begin{figure}
	\centering
	\begin{tikzpicture}[scale=0.6]
		% typical integral weights
		\begin{scope}[shift={(90:1)}] \foreach \n in {-3,-1,1,3} \node[wt] at (30:\n) {}; \end{scope}
		\begin{scope}[shift={(90:3)}] \foreach \n in {-3,-1,1} \node[wt] at (30:\n) {}; \end{scope}
		\begin{scope}[shift={(90:-1)}] \foreach \n in {-3,-1,1,3} \node[wt] at (30:\n) {}; \end{scope}
		\begin{scope}[shift={(90:-3)}] \foreach \n in {-1,1,3} \node[wt] at (30:\n) {}; \end{scope}
		% atypical lines
		\draw[very thick] (-150:4.5) -- (30:4.5);
		\begin{scope}[shift={(90:2)}] \draw[very thick] (-150:4.5) -- (30:2.5); \end{scope}
		\begin{scope}[shift={(90:4)}] \draw[very thick] (-150:4.5) -- (30:0.5); \end{scope}
		\begin{scope}[shift={(90:-2)}] \draw[very thick] (-150:2.5) -- (30:4.5); \end{scope}
		\begin{scope}[shift={(90:-4)}] \draw[very thick] (-150:0.5) -- (30:4.5); \end{scope}
		\draw[very thick] (-90:4.5) -- (90:4.5);
		\begin{scope}[shift={(30:2)}] \draw[very thick] (-90:4.5) -- (90:2.5); \end{scope}
		\begin{scope}[shift={(30:4)}] \draw[very thick] (-90:4.5) -- (90:0.5); \end{scope}
		\begin{scope}[shift={(30:-2)}] \draw[very thick] (-90:2.5) -- (90:4.5); \end{scope}
		\begin{scope}[shift={(30:-4)}] \draw[very thick] (-90:0.5) -- (90:4.5); \end{scope}
		\begin{scope}[shift={(90:1)}] \draw[very thick] (150:3.5) -- (-30:4.5); \end{scope}
		\begin{scope}[shift={(90:3)}] \draw[very thick] (150:1.5) -- (-30:4.5); \end{scope}
		\begin{scope}[shift={(90:-1)}] \draw[very thick] (150:4.5) -- (-30:3.5); \end{scope}
		\begin{scope}[shift={(90:-3)}] \draw[very thick] (150:4.5) -- (-30:1.5); \end{scope}
		% atypical integral weights
		\foreach \n in {-4,-3,...,4} \node[atyp] at (30:\n) {};
		\begin{scope}[shift={(90:1)}] \foreach \n in {-4,-2,0,2} \node[atyp] at (30:\n) {}; \end{scope}
		\begin{scope}[shift={(90:2)}] \foreach \n in {-4,-3,...,2} \node[atyp] at (30:\n) {}; \end{scope}
		\begin{scope}[shift={(90:3)}] \foreach \n in {-4,-2,0} \node[atyp] at (30:\n) {}; \end{scope}
		\begin{scope}[shift={(90:4)}] \foreach \n in {-4,-3,...,0} \node[atyp] at (30:\n) {}; \end{scope}
		\begin{scope}[shift={(90:-1)}] \foreach \n in {-2,0,2,4} \node[atyp] at (30:\n) {}; \end{scope}
		\begin{scope}[shift={(90:-2)}] \foreach \n in {-2,-1,...,4} \node[atyp] at (30:\n) {}; \end{scope}
		\begin{scope}[shift={(90:-3)}] \foreach \n in {0,2,4} \node[atyp] at (30:\n) {}; \end{scope}
		\begin{scope}[shift={(90:-4)}] \foreach \n in {0,1,...,4} \node[atyp] at (30:\n) {}; \end{scope}
	\end{tikzpicture}
\caption{The atypical weights in $\csub^*$, pictured with $0$ at the centre.  Grey dots indicate typical weights in $P$, black lines indicate atypical weights of degree $1$ and black dots are atypical weights of degree $2$.  The Weyl vector $\rho$ is the grey dot closest to $0$ along the ray $30^{\circ}$ to the right of the upwards vertical.} \label{fig:atypwts}
\end{figure}
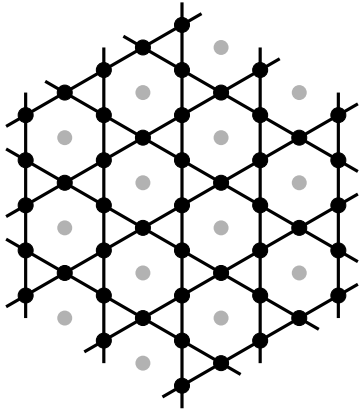

\begin{proposition}[{\cite[Propositions~4.4 and 4.6]{R}}] \label{irredM}
	$M_{\lambda}$ is irreducible if and only if $\lambda$ is typical.
\end{proposition}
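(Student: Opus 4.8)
The plan is to detect reducibility by means of a contravariant (Shapovalov) form and to reduce everything to the vanishing of quantum integers. Let $\omega$ be the algebra antiautomorphism of $\qgrp$ fixing each $H_j$ and $K_\gamma$ and interchanging $X_j \leftrightarrow X_{-j}$; a direct check against \eqref{eq:Uisl3rels} confirms that $\omega$ preserves all the relations. It determines a symmetric bilinear form $(\cdot,\cdot)$ on $M_\lambda$ by $(m_\lambda,m_\lambda)=1$ together with contravariance $(uv,w)=(v,\omega(u)w)$. Because $\omega$ fixes the $H_j$, distinct weight spaces are orthogonal, and the standard argument shows that the radical of this form is the maximal proper submodule of $M_\lambda$. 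Hence $M_\lambda$ is irreducible if and only if the form is nondegenerate, i.e.\ if and only if its Gram matrix on each weight space is invertible. Throughout I write $[n]=(\ii^{n}-\ii^{-n})/(\ii-\ii^{-1})$ for the eigenvalue by which $(K_{\alpha_j}-K_{-\alpha_j})/2\ii$ acts on weight $\mu$ with $\killing{\alpha_j}{\mu}=n$, and I record that, since $\ii-\ii^{-1}=2\ii\neq0$, one has $[n]=0$ exactly when $n$ is an even integer; in particular $[\lambda_j-1]=0$ precisely when $\lambda_j$ is atypical.

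The top space $\CC m_\lambda$ carries Gram entry $1$. For the two depth-one spaces I compute, using $[X_1,X_{-1}]=(K_1-K_1^{-1})/2\ii$, $[X_2,X_{-1}]=0$ and $X_jm_\lambda=0$, that $(X_{-1}m_\lambda,X_{-1}m_\lambda)=(m_\lambda,X_1X_{-1}m_\lambda)=[\killing{\alpha_1}{\lambda}]=[\lambda_1-1]$, and likewise $(X_{-2}m_\lambda,X_{-2}m_\lambda)=[\lambda_2-1]$. These vanish exactly when $\lambda_1$, respectively $\lambda_2$, is atypical, and in those cases $X_{-1}m_\lambda$, respectively $X_{-2}m_\lambda$, is the resulting singular vector.

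The crux is the two-dimensional weight space $M_\lambda(\lambda-\alpha_3)$, spanned by $w_1=X_{-1}X_{-2}m_\lambda$ and $w_2=X_{-2}X_{-1}m_\lambda$. Commuting the raising operators through with \eqref{eq:Uisl3rels} and evaluating $(K_{\alpha_j}-K_{-\alpha_j})/2\ii$ on the intermediate weights gives $X_1w_1=[\lambda_1]X_{-2}m_\lambda$, $X_1w_2=[\lambda_1-1]X_{-2}m_\lambda$, $X_2w_1=[\lambda_2-1]X_{-1}m_\lambda$ and $X_2w_2=[\lambda_2]X_{-1}m_\lambda$. Feeding these into the form yields the Gram matrix
\[
  G=\begin{pmatrix} [\lambda_1][\lambda_2-1] & [\lambda_1-1][\lambda_2-1] \\ [\lambda_1-1][\lambda_2-1] & [\lambda_1-1][\lambda_2] \end{pmatrix},
\]
whose determinant factorises as $\det G=[\lambda_1-1][\lambda_2-1]\bigl([\lambda_1][\lambda_2]-[\lambda_1-1][\lambda_2-1]\bigr)$. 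A short product-to-sum manipulation, together with $\lambda_3=\lambda_1+\lambda_2$, evaluates the last factor as $[\lambda_1][\lambda_2]-[\lambda_1-1][\lambda_2-1]=-\tfrac12(\ii^{\lambda_3}+\ii^{-\lambda_3})$, which vanishes precisely when $\lambda_3$ is atypical. Thus $\det G=0$ if and only if at least one of $\lambda_1,\lambda_2,\lambda_3$ is atypical, and when $\lambda_3$ alone is atypical the kernel of $G$ produces a genuine combination $aw_1+bw_2$ that is singular. I expect this depth-two space to be the main obstacle: it is the only weight space on which a singular vector need not be a PBW monomial, and it is exactly where degree-one atypicality of the form $\lambda_3\in2\ZZ+1$ with $\lambda_1,\lambda_2$ typical is detected.

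It remains to treat the three deeper spaces $\lambda-\alpha_1-\alpha_3$, $\lambda-\alpha_2-\alpha_3$ and $\lambda-2\alpha_3$. Either by repeating the bracket computations above or, more economically, by exploiting the $w_0$-symmetry of the Verma character (the form intertwines the weight space $\lambda-\beta$ with $\lambda-(2\alpha_3-\beta)$), one finds that these contribute only further powers of the quantum integers $[\lambda_1-1]$, $[\lambda_2-1]$ and $\ii^{\lambda_3}+\ii^{-\lambda_3}$ already encountered; equivalently, the full Shapovalov determinant is a nonzero scalar times a product of these factors, one family for each positive root $\alpha_1,\alpha_2,\alpha_3$, so that no new vanishing condition appears. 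Consequently the form is nondegenerate on every weight space if and only if none of $\lambda_1,\lambda_2,\lambda_3$ is atypical, that is, if and only if $\lambda$ is typical. Combining the two directions gives the claim: $M_\lambda$ is irreducible exactly when $\lambda$ is typical.
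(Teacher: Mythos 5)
Note first that the paper does not prove this statement itself: it is imported wholesale from \cite[Propositions~4.4 and 4.6]{R}, and the closest in-paper arguments (the proofs of \cref{irreduciblecharacter,LoewyM}) proceed by exhibiting explicit maximal vectors in $\verma{\lambda}$ case by case. Your route via a contravariant (Shapovalov) form is genuinely different: it treats both directions at once, since nondegeneracy of the form on every weight space is equivalent to irreducibility, and reducibility is witnessed by the kernel of a Gram matrix rather than by a guessed singular vector. The computations you actually carry out are correct. Your antiautomorphism $\omega$ does preserve the relations \eqref{eq:Uisl3rels}; the identification of $-\tfrac12(\ii^{\lambda_j}+\ii^{-\lambda_j})$ with $[\lambda_j-1]$ matches the paper's \eqref{eq:X+kX-km}; the $2\times2$ Gram matrix at $\lambda-\alpha_3$ and the factorisation of its determinant are right; and the kernel vector $([\lambda_1-1],-[\lambda_1])$ you obtain when $\lambda_3$ is atypical is proportional to the maximal vector $v$ written down in the proof of \cref{irreduciblecharacter}. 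Up to this point your argument already establishes the ``only if'' direction, since each of the three atypicality conditions is detected at depth at most two.

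The gap is in the last paragraph, and it affects precisely the ``if'' direction, which needs nondegeneracy on \emph{all seven} weight spaces. For the three deepest ones you offer two justifications and carry out neither. The first (``repeat the bracket computations'') is fine in principle but is not done. The second, the ``$w_0$-symmetry'' claim that the form intertwines $M_\lambda(\lambda-\beta)$ with $M_\lambda(\lambda-(2\alpha_3-\beta))$, is not correct as stated: distinct weight spaces are orthogonal for this form, so each weight space is paired only with itself, and the putative symmetry would have to identify the depth-$\alpha_1$ determinant $[\lambda_1-1]$ with the depth-$(\alpha_2+\alpha_3)$ determinant, which a direct computation shows equals $\pm\tfrac12\,[\lambda_2-1]\,(\ii^{\lambda_3}+\ii^{-\lambda_3})$ --- a different function of $\lambda$. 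So the symmetry cannot be used to transport the earlier results. The good news is that the assertion you need is true and cheap to verify: the three remaining weight spaces are one-dimensional, and for instance
\begin{equation*}
	\bigl( X_{-1}X_{-2}X_{-1}m_{\lambda},\, X_{-1}X_{-2}X_{-1}m_{\lambda} \bigr)
	= [\lambda_1-1]\bigl( [\lambda_1-2][\lambda_2] + [\lambda_1-1][\lambda_2-1] \bigr)
	= \tfrac12\,[\lambda_1-1]\,(\ii^{\lambda_3}+\ii^{-\lambda_3}),
\end{equation*}
with the analogous statements at $\lambda-\alpha_2-\alpha_3$ and $\lambda-2\alpha_3$, so no new vanishing conditions appear. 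You should replace the symmetry appeal with these three short computations; with that done, the proof is complete.
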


We shall see in \cref{Verma,Projective} that the subquotient structure of $\verma{\lambda}$ and $\proj{\lambda}$ is determined by which of the $\lambda_j$, $j=1,2,3$, are atypical.

It is easy to see that two irreducible modules in $\qgrpcat$ are isomorphic if and only if they have the same character.  One might not expect this to remain true when ``irreducible'' is replaced by ``projective''.  For example, it is false for general Drinfeld--Jimbo quantum groups (at roots of unity).  However, unrolling allows us to consider additive module weights rather than just multiplicative ones (meaning $H_j$-eigenvalues rather than $K_{\gamma}$-eigenvalues).  This has the following consequence.

\begin{proposition}[{\cite[Proposition~4.13]{R}}] \label{projcharacter}
	If two projective modules in $\qgrpcat$ have the same character, then they are isomorphic.
\end{proposition}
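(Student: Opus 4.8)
The plan is to reduce the statement to the linear independence of the characters of the indecomposable projectives and then to establish that independence by a triangularity argument made possible precisely by the honest $\csub^*$-valued weights of $\qgrpcat$. Since $\qgrpcat$ is a finite-length abelian category it satisfies Krull--Schmidt, so any projective $P$ decomposes as $P \cong \bigoplus_{\lambda} \proj{\lambda}^{\oplus m_{\lambda}(P)}$ with finitely many nonzero multiplicities $m_{\lambda}(P) = \dim \Hom_{\qgrpcat}(P, \irred{\lambda}) \in \NN$, the $\proj{\lambda}$ being the projective covers of the irreducibles $\irred{\lambda}$. Taking characters gives $\ch{P} = \sum_\lambda m_\lambda(P)\, \ch{\proj{\lambda}}$, so two projectives with equal character are forced to be isomorphic as soon as the family $\{\ch{\proj{\lambda}}\}_\lambda$ is linearly independent. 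This is what I would prove.

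Introduce the dominance partial order on $\csub^*$, with $\mu \preceq \lambda$ iff $\lambda - \mu \in \NN\alpha_1 + \NN\alpha_2$. By \eqref{eq:VermaChar}, $\ch{\verma{\lambda}} = z^{\lambda} \prod_{i} (1 + z^{-\alpha_i})$ has leading monomial $z^\lambda$ with coefficient $1$ and all remaining monomials strictly below $\lambda$; hence the transition matrix from $\{\ch{\verma{\lambda}}\}$ to the monomials $\{z^\lambda\}$ is unitriangular and the Verma characters are linearly independent. It is here that unrolling is essential: the weights are genuine elements of $\csub^*$ rather than classes in $\csub^*$ modulo a lattice, so this leading-term bookkeeping is available — exactly what fails for the Drinfeld--Jimbo quantum group, and what is responsible there for the failure of the analogous statement.

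Next I would expand each projective in the Verma basis. Each $\proj{\lambda}$ should admit a Verma (standard) filtration in which $\verma{\lambda}$ occurs exactly once, as the top quotient $\proj{\lambda} \onto \verma{\lambda}$, while every other subquotient $\verma{\mu}$ satisfies $\mu \succ \lambda$; equivalently, the matrix $\bigl( (\proj{\lambda} : \verma{\mu}) \bigr)$ is unitriangular for $\preceq$. Granting this, suppose $\sum_{\lambda \in S} c_\lambda \ch{\proj{\lambda}} = 0$ for a finite set $S$ and let $\lambda_0$ be minimal in $\{\lambda \in S : c_\lambda \neq 0\}$. Rewriting in the Verma basis, the coefficient of $\ch{\verma{\lambda_0}}$ receives a contribution from $\proj{\lambda}$ only when $(\proj{\lambda}:\verma{\lambda_0}) \neq 0$, i.e.\ when $\lambda \preceq \lambda_0$, hence only from $\lambda = \lambda_0$ by minimality, and that contribution equals $c_{\lambda_0}$. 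Linear independence of the Verma characters then forces $c_{\lambda_0} = 0$, a contradiction, so $\{\ch{\proj{\lambda}}\}$ is linearly independent and the proposition follows.

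The main obstacle is the structural input of the previous paragraph, namely that $\qgrpcat$ behaves like a highest-weight category: that the $\proj{\lambda}$ carry Verma filtrations and that a BGG-type reciprocity pins the multiplicities to the triangular pattern above. Concretely, this amounts to factoring the Cartan matrix $C = \bigl([\proj{\lambda}:\irred{\mu}]\bigr)$ as $C = D^{\mathsf T} D$ with $D = \bigl([\verma{\mu}:\irred{\lambda}]\bigr)$ unitriangular, whence $C$ is invertible. For the case at hand this can be extracted from the representation-theoretic results of \cite{R}, or read off directly from the explicit Loewy diagrams of the $\proj{\lambda}$ determined later in \cref{thm:LoewyP}; one should also note that the contravariant duality on $\qgrpcat$ preserves characters, so it does not interfere with the multiplicity counting above.
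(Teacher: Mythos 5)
The paper does not prove this statement itself but simply cites \cite[Proposition~4.13]{R}, and your argument is a correct reconstruction of the standard proof of that result. The reduction via Krull--Schmidt to linear independence of the $\ch{\proj{\lambda}}$, the unitriangularity of the Verma characters with respect to dominance (which is precisely what the honest $\csub^*$-valued weights of the unrolled quantum group buy you), and the unitriangularity of the matrix $\bigl((\proj{\lambda}:\verma{\mu})\bigr)$ — which follows from BGG reciprocity (\cref{bgg}) together with the fact that $[\verma{\mu}:\irred{\lambda}]\ne 0$ forces $\lambda\preceq\mu$ and $[\verma{\lambda}:\irred{\lambda}]=1$ — are all sound, and the structural inputs you flag as the main obstacle are supplied by \cref{bgg} independently of \cref{thm:LoewyP}, so no circularity arises.
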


Note that the category $\qgrpcat$ admits a character-preserving exact contravariant functor $V \mapsto \dual{V}$ analogous to the usual duality functor on the category of finite-dimensional modules over a Lie algebra (see \cite[Section~4.2]{R} for details).

\begin{definition} \label{dual}
	Let $\dual{\hphantom{\blank}}$ denote the dual defined on $\qgrpcat$ by the involutive antiautomorphism $S \circ \Omega$, where $S$ is the antipode of $\qgrp$ and $\Omega$ is the automorphism of $\qgrp$ defined by
	\begin{equation} \label{eqw}
		\Omega(X_{\pm j}) = X_{\mp j}, \quad \Omega(K_{\gamma}) = K_{-\gamma}, \quad \Omega(H_j) = -H_j, \qquad j=1,2,\ \gamma \in L.
	\end{equation}
\end{definition}

It is clear that irreducible modules in $\qgrpcat$ are self-dual with respect to this \lcnamecref{dual}.

\begin{theorem}[{\cite[Theorem~4.14]{R}}] \label{selfdual}
	With respect to $\dual{\hphantom{\blank}}$, $\proj{\lambda}$ is self-dual: $\dproj{\lambda} \cong \proj{\lambda}$.
\end{theorem}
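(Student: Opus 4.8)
The plan is to deduce self-duality from three facts already in hand: the functor $\dual{\hphantom{\blank}}$ is exact, contravariant, involutive and character-preserving; it fixes the simple objects, $\dirred{\lambda} \cong \irred{\lambda}$; and, by \cref{projcharacter}, a projective object in $\qgrpcat$ is determined up to isomorphism by its character. Since $\dual{\hphantom{\blank}}$ preserves characters, $\dproj{\lambda}$ and $\proj{\lambda}$ have the same character, so it will suffice to prove that $\dproj{\lambda}$ is again projective; \cref{projcharacter} then gives $\dproj{\lambda} \cong \proj{\lambda}$ at once. Put differently, applying the exact contravariant involution to the projective cover $\proj{\lambda} \onto \irred{\lambda}$ exhibits $\dirred{\lambda} \cong \irred{\lambda}$ as an essential sub-object of $\dproj{\lambda}$, so $\dproj{\lambda}$ is the injective hull of $\irred{\lambda}$; the theorem is thus equivalent to the assertion that this injective hull coincides with the projective cover.

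Everything therefore reduces to showing that projective objects in $\qgrpcat$ are injective (equivalently, that $\dproj{\lambda}$ is projective). This reformulation is clean homologically: since $\dual{\hphantom{\blank}}$ is an exact contravariant involution, $\Ext^1(\dproj{\lambda}, N) \cong \Ext^1(\dual{N}, \proj{\lambda})$ for every $N$, and as $\dual{N}$ ranges over all of $\qgrpcat$ this vanishes for all $N$ precisely when $\proj{\lambda}$ is injective. I would establish the needed coincidence of projectives and injectives --- that $\qgrpcat$ is a Frobenius category --- by descending it from the finite-dimensional Hopf subalgebra $\overline{\mathcal{U}}_{\ii}(\slthree) \subset \qgrp$ generated by the $X_{\pm j}$ and the $K_{\gamma}$ (that is, $\qgrp$ without the Cartan generators $H_1,H_2$). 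Being a finite-dimensional Hopf algebra, $\overline{\mathcal{U}}_{\ii}(\slthree)$ is a Frobenius algebra and hence self-injective, so in its finite-dimensional module category projectives and injectives coincide. The weight category $\qgrpcat$ fibres over the continuous $H$-weights, each fibre being governed by $\overline{\mathcal{U}}_{\ii}(\slthree)$, and I would use the restriction functor along the inclusion $\overline{\mathcal{U}}_{\ii}(\slthree) \into \qgrp$ to transport projectivity and injectivity between the two categories.

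The main obstacle is exactly this last descent. Because $\qgrp$ is itself infinite-dimensional, the classical fact that a finite-dimensional Hopf algebra is Frobenius does not apply to it directly, and one must set up an exact comparison functor relating $\qgrpcat$ to modules over $\overline{\mathcal{U}}_{\ii}(\slthree)$ --- for instance by fixing a coset of weights modulo the root lattice and recording the $H_j$-eigenvalues as continuous parameters --- and then verify that it both reflects and preserves projectivity and injectivity. An alternative, more categorical route to the same point exploits that $\qgrpcat$ is rigid: the functor $- \otimes X$ has two-sided adjoints $- \otimes X^{*}$ and $- \otimes {}^{*}X$, hence is exact and makes the projectives a tensor ideal, and the duality factors as $\dual{V} \cong (V^{*})^{\Omega}$, the contragredient $V^{*}$ twisted by the automorphism $\Omega$ of \cref{dual}. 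Twisting by $\Omega$ is a covariant auto-equivalence and so preserves projectivity, whence the theorem again comes down to the unimodularity of $\qgrp$, i.e.\ to the coincidence of projectives and injectives. Either way, the substantive content is this Frobenius/unimodular property; granting it, the three cited facts close the argument immediately.
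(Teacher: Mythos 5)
Your reduction is sound and, as far as I can tell, is essentially the skeleton of the argument in the cited source \cite{R} (the paper itself gives no proof, only the citation): since $\dual{\hphantom{\blank}}$ is an exact, character-preserving, contravariant involution fixing the simples, and since \cref{projcharacter} determines projectives by their characters, the theorem follows the moment one knows that $\dproj{\lambda}$ is projective, equivalently that $\proj{\lambda}$ is injective. Your homological bookkeeping is also fine: the dual of the projective cover $\proj{\lambda} \onto \irred{\lambda}$ is an injective hull of $\irred{\lambda}$, and $\Ext^1(\dproj{\lambda}, N) \cong \Ext^1(\dual{N}, \proj{\lambda})$ vanishes for all $N$ precisely when $\proj{\lambda}$ is injective.

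The gap is that this one step is the entire substantive content of the theorem, and you grant it rather than prove it. Of your two routes, the descent to the finite-dimensional Hopf algebra $\overline{\mathcal{U}}_{\ii}(\slthree)$ is the one to avoid: restriction along $\overline{\mathcal{U}}_{\ii}(\slthree) \into \qgrp$ does not identify $\qgrpcat$ with a module category over that algebra in a way that transports projectivity, because the weight condition (each $K_\gamma$ acting by $\ii^{\killing{\gamma}{\lambda}}$ on $V(\lambda)$ with $H_1, H_2$ semisimple) changes which extensions exist --- self-extensions of simples, for instance, are excluded in $\qgrpcat$ --- so the indecomposable projectives on the two sides are genuinely different and the Frobenius property of the finite-dimensional algebra cannot simply be quoted. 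The rigidity route is the right one, and the missing ingredient is concrete rather than an abstract unimodularity statement: the typical irreducibles $\irr{8}{\mu}$ are projective (this is the nontrivial input, established in \cite{R}, see also \cite{GP}), the rigid dual of a typical is again irreducible of dimension $8$, hence typical and projective, and every $\proj{\lambda}$ is a direct summand of $(\irred{\lambda} \otimes V^{*}) \otimes V$ for $V$ typical, since the evaluation $V^{*} \otimes V \onto \irred{0}$ makes this object surject onto $\irred{\lambda}$ while projectivity of $V$ and the tensor-ideal property make it projective. As $\dual{\hphantom{\blank}}$ sends a tensor product to the tensor product of the duals (up to reordering of the factors), the dual of this object is again a tensor product with a typical factor, hence projective, hence so is its direct summand $\dproj{\lambda}$. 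With that lemma supplied, your argument closes; without it, the proof is a correct reformulation of the theorem rather than a proof of it.
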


This result heavily constrains the Loewy diagram of $P_{\lambda}$ and will be used extensively in \cref{Projective}.

\subsection{Filtrations and BGG reciprocity} \label{subsectionfilt}

Recall that a \emph{filtration}, or \emph{series}, of a module $V$ is a strictly increasing family of submodules ordered by inclusion:
\begin{equation} \label{series}
	0 = V_0 \subset V_1 \subset \dots \subset V_{n-1} \subset V_n = V.
\end{equation}
The integer $n$ is called the \emph{length} of the series.  Such a series is called a \emph{composition series} of $V$ if the quotient of each non-zero submodule in the filtration by its predecessor is irreducible: for each $j=1,\dots,n$, $V_j / V_{j-1} \cong \irred{\lambda_j}$ for some $\lambda_j \in \csub^*$.  Similarly, a series is called a \emph{standard filtration} (or \emph{Verma flag}) of $V$ if the successive quotients are Verma modules: for each $j=1,\dots,n$, $V_j / V_{j-1} \cong \verma{\lambda_j}$ for some $\lambda_j \in \csub^*$.

Every module in $\qgrpcat$ admits a composition series.  While this series is not unique, they all have the same length.  Moreover, the set of isomorphism classes of the irreducible quotients (the \emph{composition factors}) is uniquely fixed by the isomorphism class of the module.  The set of isomorphism classes of the quotients (the \emph{Verma factors}) of a standard filtration is likewise unique, though not every module in $\qgrpcat$ admits such a series.

Given a module $V \in \qgrpcat$ which admits a standard filtration, denote by $(V:\verma{\lambda})$ the multiplicity with which $\verma{\lambda}$ appears in the set of Verma factors of any standard filtration of $V$.  Similarly, denote by $[V:\irred{\lambda}]$ the multiplicity with which $\irred{\lambda}$ appears in the set of irreducible quotients of any composition series of $V$.

\begin{proposition}[{\cite[Proposition~4.12]{R}}] \label{bgg}
	BGG reciprocity holds for the category $\qgrpcat$. In other words, $(\proj{\lambda}:\verma{\mu})=[\verma{\mu}:\irred{\lambda}]$ for all $\lambda, \mu \in \csub^*$.
\end{proposition}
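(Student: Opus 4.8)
The plan is to deduce the reciprocity from the standard homological machinery of highest-weight categories, organised as a short chain of identities. Throughout, write $\dverma{\mu} = \dual{\verma{\mu}}$ for the \emph{dual Verma} (costandard) module obtained by applying the duality of \cref{dual}; it shares its character with $\verma{\mu}$, has $\irred{\mu}$ as its socle, and all its other composition factors $\irred{\kappa}$ have $\kappa$ strictly lower than $\mu$. The goal is the chain
\[
	(\proj{\lambda}:\verma{\mu}) = \dim \Hom(\proj{\lambda}, \dverma{\mu}) = [\dverma{\mu} : \irred{\lambda}] = [\verma{\mu}:\irred{\lambda}],
\]
whose four terms I would establish by three separate arguments, two soft and one structural.

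The two outer equalities are the soft ones. For the middle-to-right step I would first prove the general identity $\dim \Hom(\proj{\lambda}, V) = [V:\irred{\lambda}]$ for every $V \in \qgrpcat$: since $\proj{\lambda}$ is projective, the functor $\Hom(\proj{\lambda}, \blank)$ is exact, so evaluating it along a composition series of $V$ turns $\dim \Hom(\proj{\lambda}, V)$ into a sum of the numbers $\dim \Hom(\proj{\lambda}, \irred{\kappa})$; as $\proj{\lambda}$ is the projective cover of $\irred{\lambda}$ its head is $\irred{\lambda}$, whence $\Hom(\proj{\lambda}, \irred{\kappa}) = \delta_{\lambda, \kappa} \CC$ and the sum counts exactly the factors isomorphic to $\irred{\lambda}$. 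Taking $V = \dverma{\mu}$ gives the second equality. The last equality is immediate from \cref{dual}: the duality is exact, contravariant, and fixes each irreducible (irreducibles are self-dual), hence it preserves composition multiplicities, so $[\dverma{\mu}:\irred{\lambda}] = [\verma{\mu}:\irred{\lambda}]$.

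The leftmost equality is the structural heart, and requires two inputs: that $\proj{\lambda}$ admits a standard filtration, and the standard--costandard orthogonality $\dim\Hom(\verma{\nu}, \dverma{\mu}) = \delta_{\nu, \mu}$ together with $\Ext^1(\verma{\nu}, \dverma{\mu}) = 0$. Granting these, a straightforward induction on the length $n$ of a standard filtration $0 = F_0 \subset \dots \subset F_n = \proj{\lambda}$, with $F_i / F_{i-1} \cong \verma{\mu_i}$, gives the claim: applying the left-exact $\Hom(\blank, \dverma{\mu})$ to $0 \to F_{n-1} \to \proj{\lambda} \to \verma{\mu_n} \to 0$, the $\Ext^1$-vanishing makes the resulting sequence short exact, so dimensions add and the inductive hypothesis together with $\dim\Hom(\verma{\mu_n}, \dverma{\mu}) = \delta_{\mu_n, \mu}$ yields $\dim\Hom(\proj{\lambda}, \dverma{\mu}) = (\proj{\lambda}:\verma{\mu})$. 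The orthogonality itself I would read off the explicit structure: a nonzero map $\verma{\nu} \to \dverma{\mu}$ is pinned down by the image of the maximal generator, which must be a maximal vector of weight $\nu$; since any nonzero submodule of $\dverma{\mu}$ meets its socle $\irred{\mu}$, weight comparisons force $\nu = \mu$ and confine the Hom-space to one dimension, while the $\Ext^1$-vanishing follows by lifting the generator of $\verma{\nu}$ through an extension by $\dverma{\mu}$ and using weight constraints to show it can be chosen maximal, so that the extension splits.

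The main obstacle is the first input above: the existence of a standard filtration for $\proj{\lambda}$, without which the symbol $(\proj{\lambda}:\verma{\mu})$ is not even defined. I would obtain it from the highest-weight structure of $\qgrpcat$ --- concretely, by realising $\proj{\lambda}$ inside a module induced up from the ``lower-triangular'' subalgebra $\qgrpnilm$ (for which \cref{PBW} supplies an explicit basis) and filtering the latter by Vermas, or equivalently by verifying the usual axiom that $\ker(\proj{\lambda} \onto \verma{\lambda})$ is filtered by $\verma{\nu}$ with $\nu$ strictly higher than $\lambda$ in the linkage order. The self-duality $\dproj{\lambda} \cong \proj{\lambda}$ of \cref{selfdual} is a useful consistency check here, as it forces $\proj{\lambda}$ to carry a costandard filtration dual to its standard one with matching multiplicities. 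The $\Ext^1$-orthogonality is the second delicate point, but it is the routine standard--costandard vanishing and should reduce to the weight bookkeeping indicated above.
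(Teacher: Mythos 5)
A point of order first: the paper contains no proof of \cref{bgg} at all --- as the bracketed attribution indicates, the proposition is imported wholesale from \cite[Proposition~4.12]{R} --- so your attempt can only be measured against the standard highest-weight-category argument, which is in essence what the cited source assembles. Judged on that basis, your skeleton is the right one. The chain $(\proj{\lambda}:\verma{\mu}) = \dim \Hom(\proj{\lambda}, \dverma{\mu}) = [\dverma{\mu} : \irred{\lambda}] = [\verma{\mu}:\irred{\lambda}]$ is correct; the two outer steps are genuinely soft here (exactness of $\Hom(\proj{\lambda},\blank)$ together with $\Hom(\proj{\lambda},\irred{\kappa}) \cong \delta_{\lambda\kappa}\,\CC$, and the fact that the duality of \cref{dual} is exact, contravariant and fixes every irreducible), and your weight-theoretic proof of $\dim\Hom(\verma{\nu},\dverma{\mu}) = \delta_{\nu\mu}$ goes through because $\dverma{\mu}$ has simple socle $\irred{\mu}$. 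One caveat on the $\Ext^1$-vanishing: when $\nu < \mu$ the lift of the generator of $\verma{\nu}$ is \emph{not} automatically maximal (there are weights of $\dverma{\mu}$ lying above $\nu$), so this case should be handled by applying the duality, $\Ext^1(\verma{\nu},\dverma{\mu}) \cong \Ext^1(\verma{\mu},\dverma{\nu})$, to reduce to the case $\nu \not< \mu$, where the lifted generator is maximal for weight reasons and the extension splits.

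The genuine gap is the input you yourself flag as the crux, and the construction you propose for it fails. Inducing from $\qgrpnilm$ is the wrong operation twice over. First, $\qgrpnilm$ does not contain the Cartan generators, so $\qgrp \otimes_{\qgrpnilm} W$ retains the entire infinite-dimensional Cartan part (the polynomial algebra in $H_1, H_2$ and the group algebra of $L$) and is therefore not an object of $\qgrpcat$. Second, even inducing from the full negative Borel $\langle X_{-1},X_{-2},H_j,K_\gamma\rangle$ produces lowest-weight objects, whose weights are bounded below; these are filtered by conjugates of Vermas, not by the highest-weight modules $\verma{\nu}$ that the symbol $(\proj{\lambda}:\verma{\mu})$ refers to. The construction that works uses the \emph{positive} Borel $B = \langle X_{1},X_{2},H_j,K_\gamma\rangle$: by the triangular decomposition underlying \cref{PBW}, $\qgrp$ is free as a right $B$-module, so $\qgrp \otimes_B \blank$ is exact, stays inside $\qgrpcat$ (dimensions get multiplied by $8$), and sends the one-dimensional weight $B$-module $\CC_\nu$ to $\verma{\nu}$; since $X_1,X_2$ raise weights, every finite-dimensional weight $B$-module is filtered by one-dimensional ones, so every such induced module is Verma-filtered; and the counit $\qgrp \otimes_B \proj{\lambda}|_B \onto \proj{\lambda}$ splits by projectivity. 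Even after this repair there remains a second missing lemma: that a \emph{direct summand} of a Verma-filtered module is Verma-filtered. This is precisely where the homological criterion ($V$ admits a Verma flag if and only if $\Ext^1(V,\dverma{\mu}) = 0$ for all $\mu$) must be proved --- the ``only if'' direction is your orthogonality, but the ``if'' direction is a separate induction that your sketch never addresses. None of this is unfixable, and it is exactly the standard package; but as written, the step on which the left-hand side of the reciprocity formula depends is the one that would fail.
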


We shall also be interested in a different type of filtration/series in which the successive quotients are required to be completely reducible.  A composition series satisfies this property, but there are usually other examples.

\begin{definition}
	A \emph{Loewy series} of $V \in \qgrpcat$ is one with minimal length amongst all series in which the successive quotients are completely reducible.
\end{definition}

There are two standard examples of Loewy series.  First, the \emph{radical series}
\begin{equation}
	0 = \rad^n V \subset \rad^{n-1} V \subset \dots \subset \rad^1 V \subset \rad^0 V = V
\end{equation}
of $V \in \qgrpcat$ is defined inductively by setting $\rad^0 V = V$ and $\rad^j V = \rad (\rad^{j-1} V)$, for $j\ge1$.  Here, the \emph{radical} of $W \in \qgrpcat$, denoted by $\rad W$, is the smallest submodule such that the quotient is completely reducible.  Equivalently, $\rad W$ is the intersection of the maximal proper submodules of $W$.  The successive semisimple quotients are denoted by $\rad_j V = \rad^{j-1} V \big/ \rad^j V$.

The second example is the \emph{socle series}
\begin{equation}
	0 = \soc^0 V \subset \soc^1 V \subset \dots \subset \soc^{n-1} V \subset \soc^n V = V
\end{equation}
of $V \in \qgrpcat$, this being the series defined inductively by taking $\soc^0 V = 0$ and letting $\soc^j V$ be the unique submodule of $V$ satisfying $\soc(V \big/ \soc^{j-1} V) = \soc_j V$, where $\soc_j V = \soc^j V \big/ \soc^{j-1} V$.  Here, the \emph{socle} of $W \in \qgrpcat$, denoted by $\soc W$, is the largest completely reducible submodule.  Equivalently, it is the sum of the irreducible submodules of $W$.

Note that the radical and socle series of $V \in \qgrpcat$ are uniquely specified.  The dual nature of the definitions suggests that they might actually coincide, however this need not be the case.  Indeed, the radical series imposes maximality on quotients, starting from $V$ and working down to $0$, while the socle series imposes maximality on submodules, starting from $0$ and working up to $V$.  The upshot is that any Loewy series \eqref{series} must therefore lie between these two extremes:
\begin{equation}
	\rad^{n-k} V \subseteq V_k \subseteq \soc^k V, \qquad k=0,1,\dots,n.
\end{equation}
Consequently, $V$ admits a unique Loewy series if and only if the radical and socle series coincide.

In general, it is difficult to know if Loewy series are unique.  However, there are easily derived criteria if the length $n$ of the series is sufficiently small.  To see this, note that:
\begin{itemize}
	\item If $\soc_1 V = \soc V$ is irreducible, then $\rad^{n-1} V = \soc^1 V$.
	\item If the \emph{head} $\rad_1 V = V/\rad V$ of $V$ is irreducible, then $\rad^1 V = \soc^{n-1} V$.
\end{itemize}
It follows, when $n=2$, that all Loewy series coincide if either the socle or head of $V$ is irreducible.  Similarly, when $n=3$, all Loewy series coincide if both the socle and head are irreducible.

As our category $\qgrpcat$ admits a duality functor, one is naturally led to ask for a relationship between the Loewy series of a module $V$ and its dual $\dual{V}$.  For the radical and socle series, this is the content of the \emph{Landrock lemma} \cite[Lemma~8.4i)]{L} which states that
\begin{equation}
	\soc^j \dual{V} \cong \outdual{\frac{V}{\rad^j V}} \qquad \text{and} \qquad \rad^j \dual{V} \cong \outdual{\frac{V}{\soc^j V}}.
\end{equation}
The successive semisimple quotients are then related as follows:
\begin{align} \label{eq:socq=radq}
	\outdual{\soc_j \dual{V}}
	&= \outdual{\frac{\soc^j \dual{V}}{\soc^{j-1} \dual{V}}}
	\cong \outdual{\outdual{\frac{V}{\rad^j V}} \middle/ \outdual{\frac{V}{\rad^{j-1} V}}} \notag \\
	&\cong \ker \left( \frac{V}{\rad^j V} \onto \frac{V}{\rad^{j-1} V} \right)
	\cong \frac{\rad^{j-1} V}{\rad^j V}
	= \rad_j V.
\end{align}
Since the irreducible $\qgrp$-modules in $\qgrpcat$ are self-dual, we therefore have $\rad_j V \cong \soc_j \dual{V}$ for all $j=1,\dots,n$.

\begin{definition} \label{Loewyd}
	Given a Loewy series \eqref{series} of $V \in \qgrpcat$, the associated \emph{Loewy diagram} is a picture of the composition factors of $V$ arranged in horizontal layers that reflect the successive quotients of the series.  In particular, the $j$-th layer from the bottom consists of the irreducible direct summands of the completely reducible module $V_j/V_{j-1}$.
\end{definition}

We shall generally annotate a Loewy diagram with arrows pointing from a composition factor in the $j$-th layer to one in the $(j-1)$-th layer whenever these factors correspond to a non-split subquotient of $V_j / V_{j-2}$.  This additional information helps to describe the submodule structure of $V$, though it need not fix it completely.

Since the irreducibles $\irred{\lambda}$ are self-dual, \eqref{eq:socq=radq} implies that the Loewy diagram for the radical (socle) series of $\dual{V}$ (\emph{sans} arrows) is obtained from that of the socle (radical) series of $V \in \qgrpcat$ by turning it upside-down.  In particular, the radical and socle series of the projectives $\proj{\lambda}$ are related in this way, by \cref{selfdual}.

\section{Representation theory of $\qgrp$} \label{qgrpmod}

The Verma modules of $\qgrp$ are only $8$-dimensional, implying that their submodule structures may be readily computed.  This is the topic of the present \lcnamecref{qgrpmod}, along with the more intricate determination of the structures of the indecomposable projective modules and the decomposition of all tensor products of irreducibles in $\qgrpcat$.

\subsection{Verma modules and irreducible quotients} \label{Verma}

Recall from \cref{irredM} that the irreducibility of the Verma module $M_{\lambda}$ is determined by the typicality of the scalars $\lambda_j=\langle \lambda+\rho ,\alpha_j \rangle$, $j=1,2,3$.  Here, the structures of the reducible Verma modules are determined and the characters of their irreducible quotients computed.  Note that at most two of the $\lambda_j$ can be atypical (the third index must then lie in $2\ZZ$).

\begin{proposition} \label{irreduciblecharacter}
	\leavevmode
	\begin{enumerate}
		\item \label{it:typ}
		If $\lambda$ is typical, so $\lambda_1, \lambda_2, \lambda_3 \notin 2\ZZ+1$, then $\dim \irred{\lambda} = 8$ and
		\begin{equation}
			\ch{\irred{\lambda}} = z^{\lambda} + z^{\lambda-\alpha_1} + z^{\lambda-\alpha_2} + 2z^{\lambda-\alpha_3} + z^{\lambda-\alpha_1-\alpha_3} + z^{\lambda-\alpha_2-\alpha_3} + z^{\lambda-2\alpha_3}.
		\end{equation}
		\item \label{it:atyp1}
		If $\lambda$ is atypical of degree $1$, then $\dim \irred{\lambda} = 4$ and
		\begin{equation}
			\ch{\irred{\lambda}} = z^{\lambda} + z^{\lambda-\alpha_j} + z^{\lambda-\alpha_k} + z^{\lambda-\alpha_j-\alpha_k},
		\end{equation}
		where $j,k \in \{1,2,3\}$ are chosen so that $\lambda_j$ and $\lambda_k$ are typical.
		\item \label{it:atyp2}
		If $\lambda$ is atypical of degree $2$ and $\lambda_3$ is atypical, then $\dim \irred{\lambda} = 3$ and
		\begin{equation}
			\ch{\irred{\lambda}} = z^{\lambda} + z^{\lambda-\alpha_j} + z^{\lambda-\alpha_3},
		\end{equation}
		where $j \in \{1,2\}$ is chosen so that $\lambda_j$ is typical.
		\item \label{it:atyp2'}
		Otherwise, $\lambda$ is atypical of degree $2$ and $\lambda_3$ is typical.  In this case, $\dim \irred{\lambda} = 1$ and
		\begin{equation}
			\ch{\irred{\lambda}} = z^{\lambda}.
		\end{equation}
	\end{enumerate}
\end{proposition}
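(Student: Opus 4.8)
The plan is to realise each irreducible $\irred{\lambda}$ as the quotient of the $8$-dimensional Verma module $\verma{\lambda}$ by its maximal submodule, and to read off $\ch{\irred{\lambda}}$ by subtracting the character of that submodule from the expression for $\ch{\verma{\lambda}}$ in \eqref{eq:VermaChar}. Case \ref{it:typ} is immediate from \cref{irredM}: when $\lambda$ is typical, $\verma{\lambda}$ is irreducible, so $\irred{\lambda}=\verma{\lambda}$, and expanding $z^{\lambda}\prod_{i=1}^{3}(1+z^{-\alpha_i})$ (where $\alpha_3=\alpha_1+\alpha_2$ is exactly what produces the doubled $z^{\lambda-\alpha_3}$) gives the stated eight-term character. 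For the reducible cases I would instead locate all maximal (singular) vectors of weight strictly below $\lambda$, generate the maximal submodule from them, and subtract.

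The essential tool is that $\tfrac{1}{2\ii}(K_j-K_j^{-1})$ acts on a weight-$\mu$ vector as the quantum integer $[n]:=\tfrac{1}{2\ii}(\ii^{n}-\ii^{-n})=\sin\bigl(\tfrac{\pi}{2}n\bigr)$ with $n=\killing{\alpha_j}{\mu}$, which vanishes exactly when $\killing{\alpha_j}{\mu}\in 2\ZZ$. Since $\killing{\alpha_j}{\lambda}=\lambda_j-1$ for $j=1,2$, one has $X_j X_{-j}m_{\lambda}=[\lambda_j-1]\,m_{\lambda}$, so $X_{-j}m_{\lambda}$ is singular precisely when $\lambda_j$ is atypical. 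The delicate point is the $2$-dimensional space $\verma{\lambda}(\lambda-\alpha_3)$, spanned by $X_{-1}X_{-2}m_{\lambda}$ and $X_{-2}X_{-1}m_{\lambda}$: computing the $X_1$- and $X_2$-images of a general element of this space gives a $2\times2$ homogeneous system whose matrix has determinant $[\lambda_1][\lambda_2]-[\lambda_1-1][\lambda_2-1]=[\lambda_3-1]=-\cos\bigl(\tfrac{\pi}{2}\lambda_3\bigr)$, using $\lambda_3=\lambda_1+\lambda_2$. Hence a singular vector of weight $\lambda-\alpha_3$ exists iff $\lambda_3$ is atypical, and its direction is the kernel of this matrix.

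Armed with this, the maximal submodule is the one generated by the singular vectors found, and its character is combinatorial: a singular vector $v$ of weight $\mu$ generates $\qgrpnilm$ applied to $v$, and because $X_{\pm j}^2=0$ the surviving lowering monomials, hence the reachable weights, are easily enumerated. In case \ref{it:atyp1} there is a single singular vector (either $X_{-j}m_{\lambda}$ for the unique atypical index in $\{1,2\}$, or a weight-$(\lambda-\alpha_3)$ vector when $\lambda_3$ is the atypical index) generating a $4$-dimensional submodule; subtracting leaves the claimed $4$-dimensional quotient, which in both subcases equals $z^{\lambda}+z^{\lambda-\alpha_j}+z^{\lambda-\alpha_k}+z^{\lambda-\alpha_j-\alpha_k}$ over the two typical indices $j,k$. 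In the final doubly-atypical case with $\lambda_3$ typical, both $X_{-1}m_{\lambda}$ and $X_{-2}m_{\lambda}$ are singular and together generate everything except $\CC m_{\lambda}$, leaving the $1$-dimensional $\irred{\lambda}$.

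The crux, and the step I expect to be the main obstacle, is case \ref{it:atyp2}: here $\lambda_3$ and exactly one of $\lambda_1,\lambda_2$ (say $\lambda_1$) are atypical, so $X_{-1}m_{\lambda}$ is singular and spans a $4$-dimensional submodule, but the atypicality of $\lambda_3$ forces a second singular vector, of weight $\lambda-\alpha_2-\alpha_3$, which is singular only modulo the first submodule. One must therefore quotient in two stages to arrive at the $3$-dimensional $z^{\lambda}+z^{\lambda-\alpha_j}+z^{\lambda-\alpha_3}$. The careful points throughout are (a) confirming that this second vector genuinely appears and is not already accounted for, and (b) verifying in every case that the final quotient carries no further singular vectors, so that it really is $\irred{\lambda}$; the latter requires checking the bottom weights $\lambda-\alpha_j-\alpha_3$ and $\lambda-2\alpha_3$ and not merely the top layers. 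All of these residual checks reduce to the same quantum-integer computation, and the resulting dimensions $8,4,3,1$ furnish a consistency check.
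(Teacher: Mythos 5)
Your proposal is correct and follows essentially the same route as the paper: case \ref{it:typ} from \cref{irredM}, and the remaining cases by locating singular (and, in case \ref{it:atyp2}, subsingular) vectors via the vanishing of $\tfrac{1}{2\ii}(K_j-K_j^{-1})$ on the relevant weight spaces, then subtracting the character of the maximal submodule from \eqref{eq:VermaChar}. Your determinant identity $[\lambda_1][\lambda_2]-[\lambda_1-1][\lambda_2-1]=[\lambda_3-1]$ on the two-dimensional space $\verma{\lambda}(\lambda-\alpha_3)$ is just a tidy repackaging of the paper's explicit maximal vector $v$ of that weight, and your flagged residual checks (non-vanishing of descendants, no further singular vectors in the quotient) are exactly the computations the paper carries out.
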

\begin{proof}
	Case \ref{it:typ} follows immediately from \cref{irredM} and the Verma character \eqref{eq:VermaChar}.  We therefore start with \ref{it:atyp2'}, which assumes that $\lambda_1, \lambda_2 \in 2\ZZ+1$.  If $v$ is a vector of weight $\lambda$ and $j,j'=1,2$, then
	\begin{equation} \label{eq:X+kX-km}
		\left[ X_j, X_{-j'} \right] v = \delta_{j,j'} \frac{K_j - K_j^{-1}}{2\ii} v = \delta_{j,j'} \frac{\ii^{\killing{\lambda}{\alpha_j}} - \ii^{-\killing{\lambda}{\alpha_j}}}{2\ii} v = -\frac{1}{2} \delta_{j,j'} \left(\ii^{\lambda_j} + \ii^{-\lambda_j} \right) v.
	\end{equation}
	Taking $v = m_{\lambda}$, a generating maximal vector in $\verma{\lambda}$, it follows that $X_j X_{-j'} m_{\lambda}$ is always $0$, for $j \ne j'$, and $X_j X_{-j} m_{\lambda} = 0$ if and only if $\lambda_j \in 2\ZZ+1$.  Thus, $X_{-1} m_{\lambda}$ and $X_{-2} m_{\lambda}$ belong to the maximal proper submodule $\maxi{\lambda}$ of $\verma{\lambda}$, when $\lambda$ is typical, hence so does $X_{-3} m_{\lambda}$.  $\irred{\lambda}$ is therefore one-dimensional, spanned by the equivalence class of $m_{\lambda}$, as claimed.

	In case \ref{it:atyp2}, take (without loss of generality) $\lambda_1 \in 2\ZZ$ and $\lambda_2, \lambda_3 \in 2\ZZ+1$.  Then, $X_{-2} m_{\lambda} \in \verma{\lambda}$ is maximal, as above, and so it and its descendants lie in $\maxi{\lambda}$.  We claim that $X_{-1} X_{-3} m_{\lambda} \in \maxi{\lambda}$ as well.  Since $X_2 X_{-1} X_{-3} m_{\lambda}$ vanishes for weight reasons, this follows from
	\begin{align}
		X_1 X_{-1} X_{-3} m_{\lambda}
		&= -X_1 X_{-1} X_{-2} X_{-1} m_{\lambda}
		= -[X_1, X_{-1}] X_{-2} X_{-1} m_{\lambda} - X_{-1} X_{-2} [X_1, X_{-1}] m_{\lambda} \notag \\
		&= \frac{1}{2} \left( \ii^{\lambda_1-1} + \ii^{-\lambda_1+1} \right) X_{-2} X_{-1} m_{\lambda} + \frac{1}{2} \left( \ii^{\lambda_1} + \ii^{-\lambda_1} \right) X_{-1} X_{-2} m_{\lambda} \notag \\
		&= \frac{1}{2} \left( \ii^{\lambda_1} + \ii^{-\lambda_1} \right) X_{-1} X_{-2} m_{\lambda},
	\end{align}
	which is non-zero since $\lambda_1 \in 2\ZZ$.  Similar calculations establish that $X_2$ sends $X_{-3} m_{\lambda}$ to a non-zero multiple of $X_{-1} m_{\lambda}$, which is in turn sent to a non-zero multiple of $m_{\lambda}$ by $X_1$.  These states are therefore not in $\maxi{\lambda}$, hence their images span $\irred{\lambda}$ and $\dim \irred{\lambda} = 3$.

	It remains to prove \ref{it:atyp1}.  Suppose first that $\lambda_2 \in 2\ZZ+1$, so that $\lambda_1, \lambda_3 \in \CC \setminus \ZZ$.  The previous case shows that $X_{-2} m_{\lambda}$ and its descendants are in $\maxi{\lambda}$.  This time, $X_{-1} X_{-3} m_{\lambda}$ is not because
	\begin{equation}
		X_1 X_{-1} X_{-3} m_{\lambda} = \frac{1}{2} \left( \ii^{\lambda_1-1} + \ii^{-(\lambda_1-1)} \right) X_{-2} X_{-1} m_{\lambda} \quad \pmod{\maxi{\lambda}},
	\end{equation}
	which is non-zero as $\lambda_1 \notin 2\ZZ$.  Moreover, $X_{-2} X_{-1} m_{\lambda} \notin \maxi{\lambda}$ as $X_2$ sends it to a non-zero multiple of $X_{-1} m_{\lambda}$ and $X_1$ sends the latter to $m_{\lambda}$.  $\dim \irred{\lambda}$ is thus $4$.

	As $\lambda_1 \in 2\ZZ+1$ obviously follows in the same way, the last case to consider is $\lambda_3 \in 2\ZZ+1$ and $\lambda_1, \lambda_2 \in \CC \setminus \ZZ$.  As above, it easily follows that $X_{-1} m_{\lambda}, X_{-2} m_{\lambda} \notin \maxi{\lambda}$.  However there is a vector of weight $\lambda - \alpha_3$, unique up to scalar multiples, that is maximal hence in $\maxi{\lambda}$:
	\begin{equation}
		v = \left( \ii^{\lambda_1} + \ii^{-\lambda_1} \right) X_{-1} X_{-2} m_{\lambda} - \left( \ii^{\lambda_1+1} + \ii^{-\lambda_1-1} \right) X_{-2} X_{-1} m_{\lambda}.
	\end{equation}
	Checking $X_1 v = 0$ is straightforward and an easy computation gives
	\begin{equation}
		X_2 v = -\left( \ii^{\lambda_3} + \ii^{-\lambda_3} \right) X_{-1} m_{\lambda},
	\end{equation}
	which vanishes because $\lambda_3 \in 2\ZZ+1$.  Now one checks that $v$ generates a four-dimensional submodule of $\verma{\lambda}$ (which is therefore $\maxi{\lambda}$).  It follows that $\dim \irred{\lambda} = 4$, completing the proof.
\end{proof}

It is very convenient that the dimension of $\irred{\lambda}$ completely determines the four different cases in \cref{irreduciblecharacter}.  This dimension also completely determines the Loewy diagrams of the Verma modules of $\qgrp$.

\begin{theorem} \label{LoewyM}
	The Loewy series of any given Verma module $\verma{\lambda}$ of $\qgrp$ all coincide.  When $\verma{\lambda}$ is reducible, its (unique) Loewy diagram is given in \cref{fig:LoewyM}.
\end{theorem}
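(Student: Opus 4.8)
The plan is to treat separately the four atypicality classes of \cref{irreduciblecharacter}. The typical case is already handled by \cref{irredM} (where $\verma{\lambda}=\irred{\lambda}$ is irreducible and so absent from \cref{fig:LoewyM}), so I would concentrate on the three reducible cases: $\lambda$ atypical of degree $1$, and $\lambda$ atypical of degree $2$ with $\lambda_3$ either atypical or typical. In each case the argument splits into three stages: first pin down the composition factors, then determine the submodule structure, and finally deduce uniqueness of the Loewy series.

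For the first stage I would exploit the fact that the characters record \emph{additive} weights, so that the decomposition of $\ch{\verma{\lambda}}$ into simple characters is essentially forced. Concretely, $\ch{\verma{\lambda}}$ is given by \eqref{eq:VermaChar} and every $\ch{\irred{\mu}}$ is listed in \cref{irreduciblecharacter}; since each $\ch{\irred{\mu}}$ has $z^{\mu}$ as its unique top term, repeatedly subtracting the simple character headed by the highest remaining weight (starting from $\ch{\irred{\lambda}}$) yields a unique multiset of composition factors. I expect this to give: in the degree-$1$ case with atypical index $j$, the factors $\irred{\lambda}$ and $\irred{\lambda-\alpha_j}$; in the degree-$2$ case with $\lambda_3$ atypical, $\irred{\lambda}$, two intermediate factors, and a single bottom factor $\irred{\lambda-\alpha_3}$; and in the degree-$2$ case with $\lambda_3$ typical, $\irred{\lambda}$ together with three factors (e.g.\ $\irred{\lambda-\alpha_1}$, $\irred{\lambda-\alpha_2}$ and $\irred{\lambda-2\alpha_3}$) of total dimension $7$.

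The second stage is the crux, and the hardest part. Here I would work directly inside the $8$-dimensional $\verma{\lambda}$ using the basis \eqref{eq:VermaBasis} and the commutator relation \eqref{eq:X+kX-km}, just as in the proof of \cref{irreduciblecharacter}. The head is always $\irred{\lambda}$ since $\verma{\lambda}$ is highest-weight, so the work is to locate the maximal vectors inside $\maxi{\lambda}$ and its subquotients, compute the submodules they generate, and track how these nest and intersect. The delicate points are: in the degree-$2$, $\lambda_3$-atypical case, showing that the two chains of submodules share a common lowest factor $\irred{\lambda-\alpha_3}$, so that the socle is simple and the diagram is a diamond; and in the degree-$2$, $\lambda_3$-typical case, verifying that the lowest-weight vector is in fact maximal, so that $\maxi{\lambda}$ is semisimple and the socle carries all three lower factors. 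Both reduce to short computations in which the scalars $\ii^{\lambda_j}+\ii^{-\lambda_j}$ vanish or not according to the parity of $\lambda_j$, combined with the nilpotency $X_{\pm i}^2=0$.

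For the third stage I would invoke the length-based criteria recorded just before \cref{Loewyd}: a module of Loewy length $2$ has a unique Loewy series once its head or socle is irreducible, and one of length $3$ once both are. As the head is $\irred{\lambda}$ throughout, the degree-$1$ and the degree-$2$/typical-$\lambda_3$ cases (each of length $2$) follow immediately, while the degree-$2$/atypical-$\lambda_3$ case (length $3$) additionally uses the simplicity of the socle established in stage two. The arrows are then essentially forced: indecomposability of $\verma{\lambda}$ forces $\irred{\lambda}$ to connect to every factor in the next layer, and dually each penultimate factor to connect to the simple socle, producing the uniserial, diamond, and ``head-over-semisimple-socle'' shapes of \cref{fig:LoewyM}. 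As a consistency check on the socle, one can compare against the dual module, whose Loewy diagram is the upside-down of that of $\verma{\lambda}$.
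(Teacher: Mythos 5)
Your overall architecture (composition factors from characters, explicit submodule computations in the $8$-dimensional Verma module, then the head/socle criteria for uniqueness) matches the paper's proof, and stages one and three are sound in principle. However, there is a genuine error in stage two that would lead you to establish the wrong diagram in one of the three cases. For $\lambda$ atypical of degree $2$ with $\lambda_1, \lambda_2 \in 2\ZZ+1$ (the $\lambda_3$-typical case), you assert that the maximality of the lowest-weight vector forces $\maxi{\lambda}$ to be semisimple, so that $\verma{\lambda}$ has Loewy length $2$ with socle $\irred{\lambda-\alpha_1} \oplus \irred{\lambda-\alpha_2} \oplus \irred{\lambda-2\alpha_3}$. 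This is false, and it contradicts the right-hand diagram of \cref{fig:LoewyM}, which is a length-$3$ diamond with simple socle $\irred{\lambda-2\alpha_3}$. The maximality of $(X_{-1}X_{-2})^2 m_{\lambda}$ only places $\irred{\lambda-2\alpha_3}$ in the socle; it says nothing about the other two factors, and the inference that $\maxi{\lambda}$ is semisimple is a non sequitur. In fact the submodule $V^i$ generated by the maximal vector $X_{-i} m_{\lambda}$ is a $4$-dimensional highest-weight module: it contains $(X_{-2}X_{-1})^2 m_{\lambda} = (X_{-1}X_{-2})^2 m_{\lambda} = -X_{-1}X_{-3}X_{-2} m_{\lambda} \ne 0$ (equality of the two squares is a defining relation \eqref{eq:Uisl3rels}, and the last expression is a PBW basis vector). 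Hence $V^i$ is a non-split extension of $\irred{\lambda-\alpha_i}$ by $\irred{\lambda-2\alpha_3}$, no vector of weight $\lambda-\alpha_i$ generates an irreducible submodule, $\irred{\lambda-\alpha_i}$ does not lie in the socle, and $\verma{\lambda}$ has Loewy length $3$. Your closing list of shapes --- ``uniserial, diamond, and head-over-semisimple-socle'' --- betrays the same confusion: both degree-$2$ diagrams in \cref{fig:LoewyM} are diamonds.

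Once this is corrected, the rest of your plan goes through essentially as in the paper: the length-$3$ diamond still has irreducible head and irreducible socle, so the $n=3$ criterion you quote yields uniqueness of the Loewy series in this case as well.
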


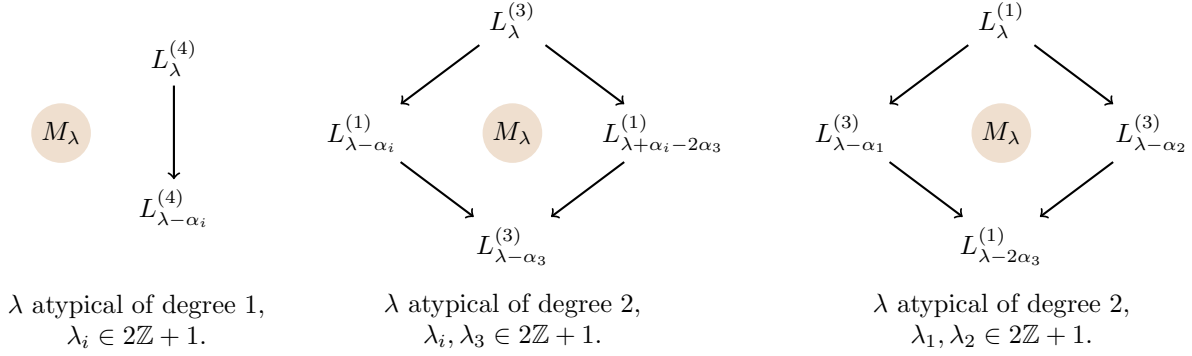
\begin{figure}
	\centering
	\begin{tikzpicture}[->, thick, scale=1]
		\begin{scope}[shift={(0,0)}]
			\node (top) at (1,1) {$\irr{4}{\lambda}$};
			\node (bottom) at (1,-1) {$\irr{4}{\lambda-\alpha_i}$};
			\draw (top) -- (bottom);
			\node[nom] at (-0.5,0) {$\verma{\lambda}$};
			\node[align=center] at (0.5,-2.5) {$\lambda$ atypical of degree $1$, \\ $\lambda_i \in 2\ZZ+1$.};
		\end{scope}
		\begin{scope}[shift={(5.5,0)}]
			\node (top) at (0,1.5) {$\irr{3}{\lambda}$};
			\node (middleI) at (-2,0) {$\irr{1}{\lambda-\alpha_i}$};
			\node (middleII) at (2,0) {$\irr{1}{\lambda+\alpha_i-2\alpha_3}$};
			\node (bottom) at (0,-1.5) {$\irr{3}{\lambda-\alpha_3}$};
			\draw (top) -- (middleI);
			\draw (top) -- (middleII);
			\draw (middleI) -- (bottom);
			\draw (middleII) -- (bottom);
			\node[nom] at (0,0) {$\verma{\lambda}$};
			\node[align=center] at (0,-2.5) {$\lambda$ atypical of degree $2$, \\ $\lambda_i, \lambda_3 \in 2\ZZ+1$.};
		\end{scope}
		\begin{scope}[shift={(12,0)}]
			\node (top) at (0,1.5) {$\irr{1}{\lambda}$};
			\node (middleI) at (-2,0) {$\irr{3}{\lambda-\alpha_1}$};
			\node (middleII) at (2,0) {$\irr{3}{\lambda-\alpha_2}$};
			\node (bottom) at (0,-1.5) {$\irr{1}{\lambda-2\alpha_3}$};
			\draw (top) -- (middleI);
			\draw (top) -- (middleII);
			\draw (middleI) -- (bottom);
			\draw (middleII) -- (bottom);
			\node[nom] at (0,0) {$\verma{\lambda}$};
			\node[align=center] at (0,-2.5) {$\lambda$ atypical of degree $2$, \\ $\lambda_1, \lambda_2 \in 2\ZZ+1$.};
		\end{scope}
	\end{tikzpicture}
\caption{The Loewy diagrams of the reducible Verma modules $\verma{\lambda}$ of $\qgrp$.  Here, the superscript on a composition factor indicates its dimension.} \label{fig:LoewyM}
\end{figure}

\begin{proof}
	Suppose first that $\dim \irred{\lambda} = 4$, so that there is a unique $i=1,2,3$ satisfying $\lambda_i \in 2\ZZ+1$ and thus $\lambda_j \in \CC \setminus \ZZ$ for all $j \ne i$.  The proof of \cref{irreduciblecharacter} established that the maximal proper submodule $\maxi{\lambda} = \rad \verma{\lambda}$ is generated by a maximal vector of weight $\mu = \lambda - \alpha_i$.  Because $\mu_i = \lambda_i - 2 \in 2\ZZ+1$ and $\mu_j = \lambda_j \pm 1 \in \CC \setminus \ZZ$, for $j \ne i$, $\dim \irred{\mu}$ is also $4$.  This implies that $\rad \verma{\lambda} \cong \irred{\mu}$, fixing the Loewy diagram of the radical series.  Because this series has length $2$ and the head is irreducible (true for all Verma modules), all Loewy series coincide and so the Loewy diagram is unique.

	Next, take $\dim \irred{\lambda} = 1$, so $\lambda_1, \lambda_2 \in 2\ZZ+1$.  In this case, the proof of \cref{irreduciblecharacter} established that $\rad \verma{\lambda}$ is generated by the maximal vectors $X_{-i} m_{\lambda}$ of weights $\lambda - \alpha_i$, $i=1,2$.  Obviously, the submodule $V^i$ generated by $X_{-i} m_{\lambda}$ does not contain the other.  Moreover, $\dim \irred{\lambda-\alpha_i} = 3$, so the dimensions of the composition factors found thus far sum to $7$.  There is therefore one remaining and its dimension must be $1$.  It is easy to check that it is $\irred{\lambda-2\alpha_3}$, generated by the maximal vector $(X_{-1} X_{-2})^2 m_{\lambda} = (X_{-2} X_{-1})^2 m_{\lambda}$, which is clearly contained in $V^1 \cap V^2$.  This forces $\rad^2 \verma{\lambda} \cong \irred{\lambda-2\alpha_3}$ and $\rad^1 \verma{\lambda} \cong V^1 + V^2$, hence $\rad_2 \verma{\lambda} \cong \irred{\lambda-\alpha_1} \oplus \irred{\lambda-\alpha_2}$.  The length of the radical series is thus $3$ and it has an irreducible head.  The socle is obviously the irreducible $\irred{\lambda-2\alpha_3}$, so again there is a unique Loewy diagram.

	It remains to consider $\dim \irred{\lambda} = 3$.  Without loss of generality, take $\lambda_1 \in 2\ZZ$ and $\lambda_2 \in 2\ZZ+1$.  Then, the proof of \cref{irreduciblecharacter} shows that $\rad \verma{\lambda}$ is the $5$-dimensional submodule generated by the maximal vector $X_{-2} m_{\lambda}$ and the non-maximal vector $X_{-1} X_{-3} m_{\lambda}$.  Under the action of $X_1$, the latter is mapped to a non-zero multiple of $X_{-1} X_{-2} m_{\lambda}$ which is easily checked to be maximal.  We have thus identified three composition factors of $\rad \verma{\lambda}$: $\irred{\lambda-\alpha_2}$, $\irred{\lambda-\alpha_1-\alpha_3}$ and $\irred{\lambda-\alpha_3}$.  Their dimensions are $1$, $1$ and $3$ by \cref{irreduciblecharacter}, so there are no more factors.  This proves that $\rad^2 \verma{\lambda} \cong \irred{\lambda - \alpha_3}$ and $\rad_2 \verma{\lambda} \cong \irred{\lambda - \alpha_2} \oplus \irred{\lambda - \alpha_1 - \alpha_3}$.  The length of the radical series is thus $3$ and $\soc \verma{\lambda} \cong \irred{\lambda-\alpha_3}$, so $\verma{\lambda}$ once more has a unique Loewy diagram.
\end{proof}

A useful consequence of \cref{LoewyM} is the classification of indecomposable extensions of irreducibles in $\qgrpcat$.  Self-extensions are ruled out as they are necessarily non-weight, hence not in $\qgrpcat$.  Such an extension is therefore a highest-weight module, hence a Verma quotient, or the dual of a highest-weight module.  Either way, the extension is unique up to isomorphism.

\begin{corollary} \label{cor:irredext}
	A non-split extension between $\irred{\lambda}$ and $\irred{\mu}$ exists, and is then unique up to isomorphism, if and only if one of the following conditions is satisfied:
	\begin{itemize}
		\item $\lambda$ is atypical of degree $1$ and $\mu = \lambda \pm \alpha_i$, where $i=1,2,3$ satisfies $\lambda_i \in 2\ZZ+1$.
		\item $\lambda$ is atypical of degree $2$ with $\lambda_1, \lambda_2 \in 2\ZZ+1$ and $\mu = \lambda \pm \alpha_1, \lambda \pm \alpha_2, \lambda-\alpha_1+2\alpha_3, \lambda-\alpha_2+2\alpha_3$.
		\item $\lambda$ is atypical of degree $2$ with $\lambda_3 \in 2\ZZ+1$ and $\mu = \lambda \pm \alpha_i, \lambda+\alpha_i-2\alpha_3$, where $i=1,2$ satisfies $\lambda_i \in 2\ZZ+1$.
	\end{itemize}
\end{corollary}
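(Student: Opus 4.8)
The plan is to classify the indecomposable length-two modules in $\qgrpcat$ directly and then read off the answer from the Verma Loewy diagrams in \cref{fig:LoewyM}. A nonsplit extension between $\irred{\lambda}$ and $\irred{\mu}$ is precisely an indecomposable $E \in \qgrpcat$ of length two whose composition factors are $\irred{\lambda}$ and $\irred{\mu}$. Since $\qgrpcat$ admits no self-extensions, $\lambda \ne \mu$, and indecomposability forces $\rad E \ne 0$; as $E$ has length two, $\rad E$ is then a simple submodule, so $E$ is uniserial with irreducible head $\irred{\nu}$ and irreducible socle $\irred{\nu'}$, where $\{\nu,\nu'\} = \{\lambda,\mu\}$. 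Because the duality $\dual{\hphantom{\blank}}$ is exact and fixes the irreducibles, $\Ext^1(\irred{\lambda},\irred{\mu}) \cong \Ext^1(\irred{\mu},\irred{\lambda})$, so ``there is a nonsplit extension between $\irred{\lambda}$ and $\irred{\mu}$'' is a symmetric relation; I will analyse extensions with head $\irred{\lambda}$ and recover the rest by dualising.

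The key reduction is that $E$ or $\dual{E}$ is a highest-weight module. To see this, lift a maximal vector of the head $\irred{\nu}$ to a weight vector $v \in E(\nu)$; then $X_1 v, X_2 v \in \rad E = \irred{\nu'}$. If both vanish, $v$ is maximal and, since $v \notin \rad E$ and $\rad E$ is the unique maximal submodule, $v$ generates $E$, so $E$ is a quotient of $\verma{\nu}$. If instead $X_j v \ne 0$ for some $j \in \{1,2\}$, then $\nu + \alpha_j$ is a weight of $\irred{\nu'}$, which forces $\nu < \nu'$ in the order on $\csub^*$ given by $\mu \le \nu \iff \nu - \mu \in \NN\alpha_1 + \NN\alpha_2$. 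Running the same argument on $\dual{E}$, whose head is $\irred{\nu'}$, would give $\nu' < \nu$; as these cannot both hold, at least one of $E, \dual{E}$ is highest-weight. Consequently every nonsplit extension is a length-two quotient of a Verma module or the dual of one. Such a quotient of $\verma{\nu}$ with socle $\irred{\nu'}$ exists exactly when $\irred{\nu'}$ is a composition factor of the second radical layer $\rad_2 \verma{\nu}$, and since each second layer in \cref{fig:LoewyM} is multiplicity-free, the submodule defining it is unique; this yields the asserted uniqueness up to isomorphism.

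It remains to enumerate, for fixed $\lambda$, the weights $\mu$ with $\irred{\mu}$ in $\rad_2 \verma{\lambda}$ (the head direction) together with those for which $\irred{\lambda}$ lies in $\rad_2 \verma{\mu}$ (the socle direction). The head direction is read straight off \cref{fig:LoewyM}. For the socle direction I would use that the dimension of $\irred{\lambda}$ --- namely $8, 4, 3, 1$ across the four atypicality classes, by \cref{irreduciblecharacter} --- determines which diagram of \cref{fig:LoewyM} can carry $\irred{\lambda}$ in its second layer: a typical $\irred{\lambda}$ (dimension $8$) never occurs in a reducible Verma and so admits no extensions, while the dimensions $4$, $1$ and $3$ single out the first, second and third diagrams respectively. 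One then solves $\irred{\mu - \alpha_j} \cong \irred{\lambda}$ or $\irred{\mu + \alpha_j - 2\alpha_3} \cong \irred{\lambda}$ for $\mu$, subject to the parity conditions on $\mu_1, \mu_2, \mu_3$ that select the relevant diagram. Combining the two directions reproduces the three bullet lists; for instance, when $\lambda$ is atypical of degree $1$ with $\lambda_i \in 2\ZZ+1$, the head direction gives $\mu = \lambda - \alpha_i$ and the socle direction gives $\mu = \lambda + \alpha_i$, i.e.\ $\mu = \lambda \pm \alpha_i$.

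The main obstacle is the structural reduction of the second paragraph: once one knows that every length-two indecomposable in $\qgrpcat$ is a highest-weight module or its dual, the statement collapses to bookkeeping against \cref{fig:LoewyM,irreduciblecharacter}. The only delicate point in that bookkeeping is verifying, in the socle direction, that the parity constraints defining the atypicality class of $\verma{\mu}$ are consistent with the shift $\mu = \lambda \pm \alpha_i$ --- concretely, that $\mu_j = \lambda_j + \killing{\alpha_i}{\alpha_j}$ moves the relevant indices into or out of $2\ZZ+1$ correctly. This is exactly what forces the precise index $i$ and pins down the exceptional combinations such as $\lambda + \alpha_i - 2\alpha_3$.
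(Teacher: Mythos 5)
Your proposal is correct and follows essentially the same route as the paper: rule out self-extensions, observe that any length-two indecomposable (or its dual) is a highest-weight module and hence a Verma quotient, and then read the admissible pairs off the Loewy diagrams of \cref{fig:LoewyM}. The only difference is that you spell out the weight-vector argument showing $E$ or $\dual{E}$ is highest-weight, a step the paper asserts without proof.
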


The blocks of $\qgrp$ are thus as follows.  If $\lambda$ is typical, then $\irred{\lambda}$ generates a semisimple block in which it is the only irreducible.  There are three one-parameter families of blocks corresponding to $\lambda$ atypical of degree $1$, one for each positive root $\alpha_i$.  The parameter may be taken as either of the typical $\lambda_i \in \CC \setminus \ZZ$.  Finally, there is a unique block for $\lambda$ atypical of degree $2$.  This is the principal block as it includes $\irred{0}$.

\subsection{Projective modules} \label{Projective}

We turn to the Loewy diagrams of the projective indecomposables $\proj{\lambda}$.  These are almost completely determined by combining the Loewy diagrams of the Verma modules with their standard (and costandard) filtrations.  The following \lcnamecref{Loewylemma} will help to complete the picture.

\begin{lemma} \label{Loewylemma}
	Given $\lambda_i, \lambda_3 \in 2\ZZ+1$, for some $i=1,2$, there does not exist a $\qgrp$-module whose Loewy diagram is the form
	\begin{equation} \label{noloewy}
		\begin{tikzpicture}[->, thick, scale=0.6, baseline=(l.base)]
			\node (t) at (0,2) {$\irr{3}{\lambda}$};
			\node (l) at (-2,0) {$\irr{1}{\lambda+\alpha_i}$};
			\node (r) at (2,0) {$\irr{1}{\lambda+\alpha_i-2\alpha_3}$};
			\node (b) at (0,-2) {$\irr{3}{\lambda}$};
			\draw (t) -- (l);
			\draw (t) -- (r);
			\draw (l) -- (b);
			\draw (r) -- (b);
		\end{tikzpicture}
		\ .
	\end{equation}
\end{lemma}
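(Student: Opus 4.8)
\emph{Proof proposal.} The plan is to assume, for contradiction, that a module $M \in \qgrpcat$ with the displayed Loewy diagram exists, and to show that its highest weight vector is in fact annihilated by all lowering operators, so that it spans a one-dimensional submodule of the wrong isomorphism type to sit in $\soc M$. Using the Dynkin diagram automorphism $\dynkaut$ of $\qgrp$ (which interchanges $\alpha_1$ and $\alpha_2$), I may assume $i=1$, so that $\lambda_1, \lambda_3 \in 2\ZZ+1$ while $\lambda_2 \in 2\ZZ$. The four composition factors are then $\irred{\lambda}$ (with multiplicity two), $\irred{\lambda+\alpha_1}$ and $\irred{\lambda+\alpha_1-2\alpha_3}$, and the diagram asserts in particular that $\soc M \cong \irred{\lambda}$ is simple and that the head $M/\rad M \cong \irred{\lambda}$ is simple.

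First I would read off the weights of $M$ from the characters in \cref{irreduciblecharacter}. The support of $\ch{M}$ is $\{\lambda+\alpha_1,\ \lambda,\ \lambda-\alpha_2,\ \lambda-\alpha_3,\ \lambda+\alpha_1-2\alpha_3\}$, with $\lambda+\alpha_1$ the unique maximal weight (of multiplicity one) and $\lambda+\alpha_1-2\alpha_3$ the unique minimal one. The only bookkeeping facts I actually need are that none of $\lambda+\alpha_2$, $\lambda+\alpha_1-\alpha_2$, nor $\lambda-\alpha_1$ is a weight of $M$; each follows by a direct comparison against the five weights above.

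Next I fix a nonzero $w \in M(\lambda+\alpha_1)$. It is maximal since $\lambda+\alpha_1$ is the top weight, and $X_{-2}w=0$ because $\lambda+\alpha_1-\alpha_2$ is not a weight. Since the head $\irred{\lambda}$ is simple and is cyclically generated by its weight-$\lambda$ vector, $M$ is generated by some $t \in M(\lambda)$ with $t \notin \rad M$. As $\lambda+\alpha_2$ is not a weight we get $X_2 t = 0$, while $X_1 t \in M(\lambda+\alpha_1) = \CC w$, say $X_1 t = cw$. I would then show $c \neq 0$: were $c=0$, then $t$ would be maximal, making $M = \langle t \rangle$ a highest-weight module of highest weight $\lambda$, hence a quotient of $\verma{\lambda}$; comparing dimensions ($\dim \verma{\lambda} = 8 = \dim M$) would force $M \cong \verma{\lambda}$, which is impossible since the composition factors of $\verma{\lambda}$ in \cref{fig:LoewyM} include $\irred{\lambda-\alpha_1}$ and exclude $\irred{\lambda+\alpha_1}$.

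The crux is then a one-line computation. Because $\killing{\lambda}{\alpha_1} = \lambda_1 - 1$ is even, $K_1$ acts on $M(\lambda)$ as the real scalar $\ii^{\lambda_1-1}$, so the relation $[X_1,X_{-1}] = (K_1 - K_1^{-1})/2\ii$ shows that $[X_1,X_{-1}]$ annihilates $t$. Hence $X_{-1}w = c^{-1}X_{-1}X_1 t = c^{-1}X_1 X_{-1} t$, and this vanishes since $X_{-1}t = 0$ (as $\lambda-\alpha_1$ is not a weight). Thus $X_{-1}w = X_{-2}w = 0$, and with $w$ maximal this makes $\CC w$ a one-dimensional submodule isomorphic to $\irred{\lambda+\alpha_1}$, so $\irred{\lambda+\alpha_1} \into \soc M$ — contradicting $\soc M \cong \irred{\lambda} \not\cong \irred{\lambda+\alpha_1}$. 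I expect the only delicate points to be the weight-support bookkeeping (so that the decisive non-weights $\lambda-\alpha_1$ and $\lambda+\alpha_1-\alpha_2$ are pinned down correctly) and the clean verification that $c \neq 0$; once the parity $\lambda_1 - 1 \in 2\ZZ$ is invoked to kill $[X_1,X_{-1}]$ on weight-$\lambda$ vectors, the contradiction is forced.
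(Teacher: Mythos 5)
Your proposal is correct and, at its core, is the same argument as the paper's: both exploit that on the weight-$\lambda$ space $X_{-i}$ acts as zero (by the weight support) while $[X_i,X_{-i}]=(K_i-K_i^{-1})/2\ii$ also acts as zero (since $\killing{\lambda}{\alpha_i}=\lambda_i-1$ is even), so the composite ``raise by $\alpha_i$, then lower by $\alpha_i$'' annihilates the head generator, which is incompatible with the two arrows passing through $\irred{\lambda+\alpha_i}$. The only cosmetic differences are that you prove $X_1t\neq 0$ via a Verma-quotient dimension count where the paper reads it off the top arrow, and you phrase the contradiction as a forbidden simple submodule $\CC w\cong\irred{\lambda+\alpha_1}$ in the socle rather than as the failure of $X_{-i}X_iv$ to be linearly independent of $v$; both packagings are fine.
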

\begin{proof}
	Suppose that $V$ is such a module, with $i=2$ say (the argument for $i=1$ is identical).  Then,
	\begin{equation}
		\ch{V} = z^{\lambda+\alpha_2} + 2 \left( z^{\lambda} + z^{\lambda-\alpha_1}+z^{\lambda-\alpha_3} \right) + z^{\lambda+\alpha_2-2\alpha_3},
	\end{equation}
	by \cref{irreduciblecharacter}.  Let $v \in V$ be cyclic of weight $\lambda$.  Then, the arrow from $\irred{\lambda}$ to $\irred{\lambda+\alpha_2}$ in \eqref{noloewy} means that $X_2 v \ne 0$ and that from $\irred{\lambda+\alpha_2}$ to $\irred{\lambda}$ means that $w = X_{-2} X_2 v$ is linearly independent of $v$ (it generates the socle of $V$).  However, $X_{-2} v = 0$ since $\lambda - \alpha_2$ is not a weight of $V$, hence
	\begin{equation}
		w = [X_{-2},X_2] v = -\frac{K_2-K_2^{-1}}{2\ii} v
	\end{equation}
	is proportional to $v$, a contradiction.
\end{proof}

\begin{theorem} \label{thm:LoewyP}
	If $\lambda$ is typical, then $\proj{\lambda} =  \verma{\lambda} = \irred{\lambda}$.  Otherwise, the projective $\proj{\lambda}$ has a unique Loewy diagram which is presented, along with its character as a sum of Verma characters, in \cref{fig:LoewyP}.
\end{theorem}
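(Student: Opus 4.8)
The plan is to treat the typical case first and then the three atypical classes in turn, in each case extracting the standard (Verma) filtration of $\proj{\lambda}$ from BGG reciprocity, reading off the composition factors, and finally pinning down the layered structure using self-duality and the extension data. If $\lambda$ is typical then \cref{irredM} gives $\irred{\lambda} = \verma{\lambda}$, while \cref{cor:irredext} shows that this irreducible admits no non-split extensions and hence generates a semisimple block on its own; it is therefore its own projective cover, giving $\proj{\lambda} = \verma{\lambda} = \irred{\lambda}$.

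For atypical $\lambda$, the first step is to compute $(\proj{\lambda}:\verma{\mu}) = [\verma{\mu}:\irred{\lambda}]$ for all $\mu$ using \cref{bgg}. Since \cref{fig:LoewyM} records the composition factors of every reducible Verma, this amounts to listing the weights $\mu$ for which $\irred{\lambda}$ occurs in $\verma{\mu}$, and the dimension formulas of \cref{irreduciblecharacter} make this a short finite search within each atypicality class. Summing the resulting Verma characters \eqref{eq:VermaChar} then produces the expression for $\ch{\proj{\lambda}}$ as a sum of Verma characters claimed in \cref{fig:LoewyP}, and at the same time lists the composition factors of $\proj{\lambda}$ with multiplicity.

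With the composition factors in hand, I would build the Loewy diagram as follows. The head of $\proj{\lambda}$ is $\irred{\lambda}$ because it is a projective cover, and \cref{selfdual} together with $\soc_j \proj{\lambda} \cong \rad_j \proj{\lambda}$ (a consequence of \eqref{eq:socq=radq}) then shows that the socle is also $\irred{\lambda}$ and, more strongly, that the whole diagram is symmetric under reflection. The standard filtration exhibits $\verma{\lambda}$ as the top section, so its head $\irred{\lambda}$ is indeed the head of $\proj{\lambda}$, while the Vermas of strictly higher highest weight fill out a submodule whose dual (costandard) structure, available by self-duality, controls the bottom of the diagram. The admissible arrows between adjacent layers are constrained by \cref{cor:irredext}, which records exactly which pairs of irreducibles admit a non-split extension. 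In the degree-$1$ class this already forces a diamond of length $3$, and uniqueness follows from the criterion recalled before \cref{Loewyd} that a length-$3$ series with irreducible head and socle has a unique Loewy diagram.

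The substantive work, and the principal obstacle, is the two degree-$2$ classes, where the standard filtration involves three (respectively six) Vermas and the radical series is correspondingly longer. Here self-duality and \cref{cor:irredext} by themselves leave genuine ambiguity, both in the placement of the repeated copies of $\irred{\lambda}$ and in whether certain non-split subquotients actually occur. This is exactly what \cref{Loewylemma} is designed to resolve: it excludes the one reflection-symmetric configuration that is consistent with the character and the extension data but incompatible with the action of $\qgrp$, namely the diagram \eqref{noloewy}. Combining this exclusion with the self-dual symmetry, the fixed head and socle, and a weight-space count from $\ch{\proj{\lambda}}$ to confirm that each remaining arrow is realised should determine the diagrams of \cref{fig:LoewyP} uniquely. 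I expect the hardest point to be proving uniqueness, rather than mere consistency, for the largest projective (degree $2$ with $\lambda_3$ typical), where one must show that the long socle and radical series coincide; there the argument again rests on the irreducibility of both head and socle to collapse the outer layers and on \cref{Loewylemma} to fix the interior.
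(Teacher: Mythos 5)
Your proposal follows essentially the same route as the paper: the typical case via \cref{cor:irredext}, BGG reciprocity to extract the (unique) standard filtration and the character, self-duality together with the Landrock identity \eqref{eq:socq=radq} to symmetrise the layers, costandard filtrations for the dual-Verma arrows, and \cref{Loewylemma} to force the remaining arrows, with uniqueness from the irreducible head and socle. One small correction: in the paper \cref{Loewylemma} is invoked only in the degree-$2$, $\lambda_3$-atypical case (the $24$-dimensional projective); for the largest projective the four middle-layer copies of $\irr{1}{\lambda}$ are deliberately left unresolved (collapsed into a single four-fold summand), with the costandard arrows then added directly and \cref{cor:irredext} ruling out any further ones, rather than \cref{Loewylemma} fixing the interior.
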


\begin{figure}
	\centering
	\begin{tikzpicture}[->, thick, scale=0.72]
		\node (t) at (0,2) {$\irr{4}{\lambda}$};
		\node (l) at (2,0) {$\irr{4}{\lambda-\alpha_i}$};
		\node (r) at (-2,0) {$\irr{4}{\lambda+\alpha_i}$};
		\node (b) at (0,-2) {$\irr{4}{\lambda}$};
		\draw[verma] (t) -- (l);
		\draw[dverma] (t) -- (r);
		\draw[dverma] (l) -- (b);
		\draw[verma] (r) -- (b);
			\node[nom] at (-2.5,2) {$\pro{16}{\lambda}$};
			\node[align=center] at (0,-3.25) {$\lambda$ atypical of degree $1$, $\lambda_i \in 2\ZZ+1$, \\ $\ch{\proj{\lambda}} = \ch{\verma{\lambda+\alpha_i}} + \ch{\verma{\lambda}}$.};
	\end{tikzpicture}
	\\ \bigskip
	\begin{tikzpicture}[->, thick, scale=0.72]
			\node (1) at (0,4) [] {$\irr{3}{\lambda}$};
			\node (21) at (-4,2) [] {$\irr{1}{\lambda+\alpha_i}$};
			\node (22) at (0,2) [] {$\irr{1}{\lambda-\alpha_i}$};
			\node (23) at (4,2) [] {$\irr{1}{\lambda+\alpha_i-2\alpha_3}$};
			\node (31) at (-6,0) [] {$\irr{3}{\lambda+\alpha_3}$};
			\node (32) at (-2,0) [] {$\irr{3}{\lambda+2\alpha_i-\alpha_3}$};
			\node (33) at (2,0) [] {$\irr{3}{\lambda}$};
			\node (34) at (6,0) [] {$\irr{3}{\lambda-\alpha_3}$};
			\node (41) at (-4,-2) [] {$\irr{1}{\lambda+\alpha_i}$};
			\node (42) at (0,-2) [] {$\irr{1}{\lambda-\alpha_i}$};
			\node (43) at (4,-2) [] {$\irr{1}{\lambda+\alpha_i-2\alpha_3}$};
			\node (5) at (0,-4) [] {$\irr{3}{\lambda}$};
			\draw[verma] (1) -- (22);
			\draw[verma] (1) -- (23);
			\draw[verma] (22) -- (34);
			\draw[verma] (23) -- (34);
			\draw[verma] (21) -- (32);
			\draw[verma] (21) -- (33);
			\draw[verma] (32) -- (43);
			\draw[verma] (33) -- (43);
			\draw[verma] (31) -- (41);
			\draw[verma] (31) -- (42);
			\draw[verma] (41) -- (5);
			\draw[verma] (42) -- (5);
			\draw[dverma] (1) -- (21);
			\draw[dverma] (21) -- (31);
			\draw[dverma] (22) -- (31);
			\draw[dverma] (23) -- (32);
			\draw[dverma] (23) -- (33);
			\draw[dverma] (32) -- (41);
			\draw[dverma] (33) -- (41);
			\draw[dverma] (34) -- (42);
			\draw[dverma] (34) -- (43);
			\draw[dverma] (43) -- (5);
			\draw[other] (22) -- (33);
			\draw[other] (33) -- (42);
			\node[nom] at (-6,4) {$\pro{24}{\lambda}$};
			\node[align=center] at (0,-5.25) {$\lambda$ atypical of degree $2$, $\lambda_i, \lambda_3 \in 2\ZZ+1$, \\ $\ch{\proj{\lambda}} = \ch{\verma{\lambda+\alpha_3}} + \ch{\verma{\lambda+\alpha_i}} + \ch{\verma{\lambda}}$.};
		\end{tikzpicture}
	\\ \bigskip
	\begin{tikzpicture}[->, thick, xscale=1.1, yscale=0.72]
			\node (1) at (0,4) [] {$\irr{1}{\lambda}$};
			\node (21) at (-5,2) [] {$\irr{3}{\lambda+\alpha_2+\alpha_3}$};
			\node (22) at (-3,2) [] {$\irr{3}{\lambda+\alpha_1+\alpha_3}$};
			\node (23) at (-1,2) [] {$\irr{3}{\lambda+\alpha_2}$};
			\node (24) at (1,2) [] {$\irr{3}{\lambda+\alpha_1}$};
			\node (25) at (3,2) [] {$\irr{3}{\lambda-\alpha_1}$};
			\node (26) at (5,2) [] {$\irr{3}{\lambda-\alpha_2}$};
			\node (31) at (-6,0) [] {$\irr{1}{\lambda+2\alpha_3}$};
			\node (32) at (-4,0) [] {$\irr{1}{\lambda+2\alpha_2}$};
			\node (33) at (-2,0) [] {$\irr{1}{\lambda+2\alpha_1}$};
			\node (34) at (0,0) [] {${\irr{1}{\lambda}}{}^{\oplus4}$};
			\node (35) at (2,0) [] {$\irr{1}{\lambda-2\alpha_1}$};
			\node (36) at (4,0) [] {$\irr{1}{\lambda-2\alpha_2}$};
			\node (37) at (6,0) [] {$\irr{1}{\lambda-2\alpha_3}$};
			\node (41) at (-5,-2) [] {$\irr{3}{\lambda+\alpha_2+\alpha_3}$};
			\node (42) at (-3,-2) [] {$\irr{3}{\lambda+\alpha_1+\alpha_3}$};
			\node (43) at (-1,-2) [] {$\irr{3}{\lambda+\alpha_2}$};
			\node (44) at (1,-2) [] {$\irr{3}{\lambda+\alpha_1}$};
			\node (45) at (3,-2) [] {$\irr{3}{\lambda-\alpha_1}$};
			\node (46) at (5,-2) [] {$\irr{3}{\lambda-\alpha_2}$};
			\node (5) at (0,-4) [] {$\irr{1}{\lambda}$};
			\draw[verma] (1) -- (25);
			\draw[verma] (1) -- (26);
			\draw[verma] (25) -- (37);
			\draw[verma] (26) -- (37);
			\draw[verma] (21) -- (32);
			\draw[verma] (21) -- (34);
			\draw[verma] (32) -- (43);
			\draw[verma] (34) -- (43);
			\draw[verma] (22) -- (33);
			\draw[verma] (22) -- (34);
			\draw[verma] (33) -- (44);
			\draw[verma] (34) -- (44);
			\draw[verma] (23) -- (34);
			\draw[verma] (23) -- (35);
			\draw[verma] (34) -- (45);
			\draw[verma] (35) -- (45);
			\draw[verma] (24) -- (34);
			\draw[verma] (24) -- (36);
			\draw[verma] (34) -- (46);
			\draw[verma] (36) -- (46);
			\draw[verma] (31) -- (41);
			\draw[verma] (31) -- (42);
			\draw[verma] (41) -- (5);
			\draw[verma] (42) -- (5);
			\draw[dverma] (1) -- (21);
			\draw[dverma] (1) -- (22);
			\draw[dverma] (21) -- (31);
			\draw[dverma] (22) -- (31);
			\draw[dverma] (23) -- (32);
			\draw[dverma] (32) -- (41);
			\draw[dverma] (34) -- (41);
			\draw[dverma] (24) -- (33);
			\draw[dverma] (33) -- (42);
			\draw[dverma] (34) -- (42);
			\draw[dverma] (25) -- (34);
			\draw[dverma] (25) -- (35);
			\draw[dverma] (35) -- (43);
			\draw[dverma] (26) -- (34);
			\draw[dverma] (26) -- (36);
			\draw[dverma] (36) -- (44);
			\draw[dverma] (37) -- (45);
			\draw[dverma] (37) -- (46);
			\draw[dverma] (45) -- (5);
			\draw[dverma] (46) -- (5);
			\draw[other] (1) -- (23);
			\draw[other] (1) -- (24);
			\draw[other] (43) -- (5);
			\draw[other] (44) -- (5);
			\node[nom] at (-6,4) {$\pro{48}{\lambda}$};
			\node[align=center] at (0,-5.25) {$\lambda$ atypical of degree $2$, $\lambda_1, \lambda_2 \in 2\ZZ+1$, \\ $\ch{\proj{\lambda}} = \ch{\verma{\lambda+2\alpha_3}} + \ch{\verma{\lambda+2\alpha_3-\alpha_1}} + \ch{\verma{\lambda+2\alpha_3-\alpha_2}} + \ch{\verma{\lambda+\alpha_1}} + \ch{\verma{\lambda+\alpha_2}} + \ch{\verma{\lambda}}$.};
		\end{tikzpicture}
	\caption{Loewy diagrams and characters of the projectives $\pro{d}{\lambda}$ with $\lambda$ atypical.  Here, $d$ is the dimension.  Red arrows are associated with standard filtrations, blue with costandard filtrations and all other arrows are green.  The bottom diagram is not quite complete, because we make no attempt to resolve the four composition factors isomorphic to $\irr{1}{\lambda}$ in the middle row.} \label{fig:LoewyP}
\end{figure}

\begin{proof}
	The fact that $\irred{\lambda}$ is projective, for $\lambda$ typical, follows immediately from \cref{cor:irredext}.  Otherwise, the multiplicities of the Verma factors may be obtained from \cref{bgg,LoewyM}.

	Suppose first that $\dim \irred{\lambda} = 4$ and $i=1,2,3$ is the unique index satisfying $\lambda_i \in 2\ZZ+1$.  Then, $\irred{\lambda}$ can only appear as the head or the socle of another Verma of the same type.  These two possibilities give $(\proj{\lambda} : \verma{\mu}) = [\verma{\mu} : \irred{\lambda}] = \delta^{\mu,\lambda} + \delta^{\mu,\lambda+\alpha_i}$.  In the Grothendieck group of $\qgrpcat$, one therefore has
	\begin{equation}
		[\proj{\lambda}] = [\verma{\lambda}] + [\verma{\lambda+\alpha_i}].
	\end{equation}
	Obviously, only $\verma{\lambda}$ has the same head as $\proj{\lambda}$.  Moreover, \cref{LoewyM} shows that only $\verma{\lambda+\alpha_i}$ has the same socle as $\proj{\lambda}$:
	\begin{equation} \label{eq:irredsoc}
		\soc \proj{\lambda} = \soc_1 \proj{\lambda} \cong \rad_1 \proj{\lambda} = \frac{\proj{\lambda}}{\rad \proj{\lambda}} \cong \irred{\lambda},
	\end{equation}
	$\proj{\lambda}$ thus admits a single standard filtration: $0 \subset \verma{\lambda+\alpha_i} \subset \proj{\lambda}$.

	It follows that $\irred{\lambda-\alpha_i}$ appears in $\rad_2 \proj{\lambda}$ and $\irred{\lambda+\alpha_i}$ appears in $\soc_2 \proj{\lambda}$.  Since projectives are self-dual (\cref{selfdual}), \eqref{eq:socq=radq} leads to $\rad_2 \proj{\lambda} = \soc_2 \proj{\lambda} \cong \irred{\lambda-\alpha_i} \oplus \irred{\lambda+\alpha_i}$.  The Loewy length of $\proj{\lambda}$ is thus $3$ and the Loewy diagram (with some arrows missing) is
	\begin{center}
		\begin{tikzpicture}[->, thick, scale=0.6, baseline=(l.base)]
			\node (t) at (0,2) {$\irred{\lambda}$};
			\node (l) at (-2,0) {$\irred{\lambda-\alpha_i}$};
			\node (r) at (2,0) {$\irred{\lambda+\alpha_i}$};
			\node (b) at (0,-2) {$\irred{\lambda}$};
			\draw[verma] (t) -- (l);
			\draw[verma] (r) -- (b);
		\end{tikzpicture}
		\ .
	\end{center}

	This diagram obviously needs two more arrows.  While it is clear where they go, it is instructive for the other cases to see them arise from a \emph{costandard filtration} of $\proj{\lambda}$, that is one whose successive quotients are \emph{dual} Verma modules.  This series is easily deduced by applying the duality functor to the short exact sequence
	\begin{equation}
		0 \longrightarrow \verma{\lambda+\alpha_i} \longrightarrow \proj{\lambda} \longrightarrow \verma{\lambda} \longrightarrow 0,
	\end{equation}
	the result being $0 \subset \dverma{\lambda} \subset \proj{\lambda}$ with $\proj{\lambda} \big/ \dverma{\lambda} \cong \dverma{\lambda+\alpha_i}$.  Since Verma modules have unique Loewy diagrams, the same is true for their duals and the latter's diagrams are obtained by turning those of \cref{fig:LoewyM} upside-down.  Drawing in blue the arrows corresponding to these dual Verma modules on the partial Loewy diagram above (to distinguish them from the Verma arrows drawn in red), we arrive at the Loewy diagram of \cref{fig:LoewyP}.  As the socle and head are irreducible, it follows that this is the unique Loewy series of $\proj{\lambda}$.

	Suppose next that $\dim \irred{\lambda} = 3$, so that $\lambda_i, \lambda_3 \in 2\ZZ+1$ for some $i=1,2$.  Consulting \cref{fig:LoewyM}, BGG reciprocity implies this time that
	\begin{equation}
		[\proj{\lambda}] = [\verma{\lambda}] + [\verma{\lambda+\alpha_i}] + [\verma{\lambda+\alpha_3}].
	\end{equation}
	Again, the only Verma factor whose head matches that of $\proj{\lambda}$ is $\verma{\lambda}$ and the only Verma whose socle matches that of $\proj{\lambda}$ is $\verma{\lambda+\alpha_3}$.  We conclude that in this case, there is also a unique standard series for $\proj{\lambda}$ and it has the form
	\begin{equation} \label{eq:standardseriesdim=3}
		0 \subset \verma{\lambda+\alpha_3} \subset N \subset \proj{\lambda}, \qquad \text{with}\ \frac{N}{\verma{\lambda+\alpha_3}} \cong \verma{\lambda+\alpha_i}\ \text{and}\ \frac{\proj{\lambda}}{N} \cong \verma{\lambda}.
	\end{equation}

	Combining this with \eqref{eq:socq=radq} and \cref{LoewyM} identifies some of the composition factors in the second Loewy layers of the radical and socle series:
	\begin{equation}
		\irred{\lambda-\alpha_i} \oplus \irred{\lambda+\alpha_i-2\alpha_3} \subseteq \rad_2 \proj{\lambda} \cong \soc_2 \proj{\lambda} \supseteq \irred{\lambda+\alpha_i} \oplus \irred{\lambda-\alpha_i}.
	\end{equation}
	It follows that $\irred{\lambda+\alpha_i}$ appears in $\rad_2 \proj{\lambda}$ and $\irred{\lambda+\alpha_i-2\alpha_3}$ appears in $\soc_2 \proj{\lambda}$.  These extra composition factors are the head and socle of $\verma{\lambda+\alpha_i}$, hence the length of the radical and socle series of $\proj{\lambda}$ is $5$.  In fact, these series coincide and the Loewy diagram (with some arrows missing) is as follows:
	\begin{center}
		\begin{tikzpicture}[->, thick, scale=0.6, baseline=(33.base)]
			\node (1) at (0,4) [] {$\irred{\lambda}$};
			\node (21) at (-4,2) [] {$\irred{\lambda+\alpha_i}$};
			\node (22) at (0,2) [] {$\irred{\lambda-\alpha_i}$};
			\node (23) at (4,2) [] {$\irred{\lambda+\alpha_i-2\alpha_3}$};
			\node (31) at (-6,0) [] {$\irred{\lambda+\alpha_3}$};
			\node (32) at (-2,0) [] {$\irred{\lambda+2\alpha_i-\alpha_3}$};
			\node (33) at (2,0) [] {$\irred{\lambda}$};
			\node (34) at (6,0) [] {$\irred{\lambda-\alpha_3}$};
			\node (41) at (-4,-2) [] {$\irred{\lambda+\alpha_i}$};
			\node (42) at (0,-2) [] {$\irred{\lambda-\alpha_i}$};
			\node (43) at (4,-2) [] {$\irred{\lambda+\alpha_i-2\alpha_3}$};
			\node (5) at (0,-4) [] {$\irred{\lambda}$};
			\draw[verma] (1) -- (22);
			\draw[verma] (1) -- (23);
			\draw[verma] (22) -- (34);
			\draw[verma] (23) -- (34);
			\draw[verma] (21) -- (32);
			\draw[verma] (21) -- (33);
			\draw[verma] (32) -- (43);
			\draw[verma] (33) -- (43);
			\draw[verma] (31) -- (41);
			\draw[verma] (31) -- (42);
			\draw[verma] (41) -- (5);
			\draw[verma] (42) -- (5);
		\end{tikzpicture}
		\ .
	\end{center}

	To complement these red Verma arrows, one next computes the corresponding costandard filtration of $\proj{\lambda}$ by dualising \eqref{eq:standardseriesdim=3}:
	\begin{equation}
		\dverma{\lambda} \into \proj{\lambda} \overset{\pi}{\onto} \dual{N} \qquad \text{and} \qquad
		\dverma{\lambda+\alpha_i} \into \dual{N} \overset{\pi'}{\onto} \dverma{\lambda+\alpha_3}.
	\end{equation}
	The first says that $\dverma{\lambda} = \ker \pi$ is a submodule of $\proj{\lambda}$ and the second says that $\pi^{-1}(\dverma{\lambda+\alpha_i})$ is too.  Of course, $\pi^{-1}(\dverma{\lambda+\alpha_i}) \big/ \dverma{\lambda} = \pi^{-1}(\dverma{\lambda+\alpha_i}) \big/ \ker \pi \cong \dverma{\lambda+\alpha_i}$.  Similarly, $\pi^{-1}(\dverma{\lambda+\alpha_i}) = \ker(\pi' \circ \pi)$ and so $\proj{\lambda} \big/ \pi^{-1}(\dverma{\lambda+\alpha_i}) \cong \dverma{\lambda+\alpha_3}$.  The required costandard series is thus
	\begin{equation}
		0 \subset \dverma{\lambda} \subset \pi^{-1}(\dverma{\lambda+\alpha_i}) \subset \proj{\lambda}
	\end{equation}
	and the dual Verma quotients are $\dverma{\lambda}$, $\dverma{\lambda+\alpha_i}$ and $\dverma{\lambda+\alpha_3}$.  Drawing these dual Verma arrows on the Loewy diagram of $\proj{\lambda}$ in blue (and not repeating if the arrow already exists) gives
	\begin{center}
		\begin{tikzpicture}[->, thick, scale=0.6, baseline=(33.base)]
			\node (1) at (0,4) [] {$\irred{\lambda}$};
			\node (21) at (-4,2) [] {$\irred{\lambda+\alpha_i}$};
			\node (22) at (0,2) [] {$\irred{\lambda-\alpha_i}$};
			\node (23) at (4,2) [] {$\irred{\lambda+\alpha_i-2\alpha_3}$};
			\node (31) at (-6,0) [] {$\irred{\lambda+\alpha_3}$};
			\node (32) at (-2,0) [] {$\irred{\lambda+2\alpha_i-\alpha_3}$};
			\node (33) at (2,0) [] {$\irred{\lambda}$};
			\node (34) at (6,0) [] {$\irred{\lambda-\alpha_3}$};
			\node (41) at (-4,-2) [] {$\irred{\lambda+\alpha_i}$};
			\node (42) at (0,-2) [] {$\irred{\lambda-\alpha_i}$};
			\node (43) at (4,-2) [] {$\irred{\lambda+\alpha_i-2\alpha_3}$};
			\node (5) at (0,-4) [] {$\irred{\lambda}$};
			\draw[verma] (1) -- (22);
			\draw[verma] (1) -- (23);
			\draw[verma] (22) -- (34);
			\draw[verma] (23) -- (34);
			\draw[verma] (21) -- (32);
			\draw[verma] (21) -- (33);
			\draw[verma] (32) -- (43);
			\draw[verma] (33) -- (43);
			\draw[verma] (31) -- (41);
			\draw[verma] (31) -- (42);
			\draw[verma] (41) -- (5);
			\draw[verma] (42) -- (5);
			\draw[dverma] (1) -- (21);
			\draw[dverma] (21) -- (31);
			\draw[dverma] (22) -- (31);
			\draw[dverma] (23) -- (32);
			\draw[dverma] (23) -- (33);
			\draw[dverma] (32) -- (41);
			\draw[dverma] (33) -- (41);
			\draw[dverma] (34) -- (42);
			\draw[dverma] (34) -- (43);
			\draw[dverma] (43) -- (5);
		\end{tikzpicture}
		\ .
	\end{center}

	It remains to check for any additional arrows beyond those already established.  There are only a few possibilities and most are ruled out by \cref{cor:irredext}.  The only possible additions are an arrow from the composition factor $\irred{\lambda-\alpha_i}$ in the second-top layer to $\irred{\lambda}$ in the middle layer and from the latter factor to $\irred{\lambda-\alpha_i}$ in the second-bottom layer.  In both cases, the absence of the arrow would imply that $\proj{\lambda}$ has a quotient or submodule whose Loewy diagram is
	\begin{center}
		\begin{tikzpicture}[->, thick, scale=0.6, baseline=(23.base)]
			\node (1) at (0,4) [] {$\irred{\lambda}$};
			\node (21) at (-2,2) [] {$\irred{\lambda+\alpha_i}$};
			\node (22) at (2,2) [] {$\irred{\lambda+\alpha_i-2\alpha_3}$};
			\node (3) at (0,0) [] {$\irred{\lambda}$};
			\draw[->,verma] (1) -- (22);
			\draw[->,verma] (21) -- (3);
			\draw[->,dverma] (1) -- (21);
			\draw[->,dverma] (22) -- (3);
		\end{tikzpicture}
		\ .
	\end{center}
	But, such a $\qgrp$-module does not exist, by \cref{Loewylemma}.  Both arrows must therefore be present and the Loewy diagram is thus as in \cref{fig:LoewyP}.

	Finally, we consider the case $\dim \irred{\lambda} = 1$, hence $\lambda_1, \lambda_2 \in 2\ZZ+1$.  Now, BGG reciprocity gives six Verma modules for the projective:
	\begin{equation}
		[\proj{\lambda}] = [\verma{\lambda}] + [\verma{\lambda+\alpha_1}] + [\verma{\lambda+\alpha_2}] + [\verma{\lambda+2\alpha_3-\alpha_1}] + [\verma{\lambda+2\alpha_3-\alpha_2}] + [\verma{\lambda+2\alpha_3}].
	\end{equation}
	Following the same procedure as in the previous case, we find that the radical and socle series again coincide and have length $5$.  The Loewy diagram (with arrows corresponding to any of the standard filtrations) is
	\begin{center}
		\begin{tikzpicture}[->, thick, scale=0.75, baseline=(34.base)]
			\node (1) at (0,4) [] {$\irred{\lambda}$};
			\node (21) at (-6.25,2) [] {$\irred{\lambda-\alpha_1+2\alpha_3}$};
			\node (22) at (-3.75,2) [] {$\irred{\lambda-\alpha_2+2\alpha_3}$};
			\node (23) at (-1.25,2) [] {$\irred{\lambda+\alpha_2}$};
			\node (24) at (1.25,2) [] {$\irred{\lambda+\alpha_1}$};
			\node (25) at (3.75,2) [] {$\irred{\lambda-\alpha_1}$};
			\node (26) at (6.25,2) [] {$\irred{\lambda-\alpha_2}$};
			\node (31) at (-9,0) [] {$\irred{\lambda+2\alpha_3}$};
			\node (32) at (-7,0) [] {$\irred{\lambda+2\alpha_2}$};
			\node (33) at (-5,0) [] {$\irred{\lambda}$};
			\node (34) at (-3,0) [] {$\irred{\lambda+2\alpha_1}$};
			\node (35) at (-1,0) [] {$\irred{\lambda}$};
			\node (36) at (1,0) [] {$\irred{\lambda}$};
			\node (37) at (3,0) [] {$\irred{\lambda-2\alpha_1}$};
			\node (38) at (5,0) [] {$\irred{\lambda}$};
			\node (39) at (7,0) [] {$\irred{\lambda-2\alpha_2}$};
			\node (30) at (9,0) [] {$\irred{\lambda-2\alpha_3}$};
			\node (41) at (-6.25,-2) [] {$\irred{\lambda-\alpha_1+2\alpha_3}$};
			\node (42) at (-3.75,-2) [] {$\irred{\lambda-\alpha_2+2\alpha_3}$};
			\node (43) at (-1.25,-2) [] {$\irred{\lambda+\alpha_2}$};
			\node (44) at (1.25,-2) [] {$\irred{\lambda+\alpha_1}$};
			\node (45) at (3.75,-2) [] {$\irred{\lambda-\alpha_1}$};
			\node (46) at (6.25,-2) [] {$\irred{\lambda-\alpha_2}$};
			\node (5) at (0,-4) [] {$\irred{\lambda}$};
			\draw[verma] (1) -- (26);
			\draw[verma] (1) -- (25);
			\draw[verma] (25) -- (30);
			\draw[verma] (26) -- (30);
			\draw[verma] (24) -- (38);
			\draw[verma] (24) -- (39);
			\draw[verma] (38) -- (46);
			\draw[verma] (39) -- (46);
			\draw[verma] (23) -- (36);
			\draw[verma] (23) -- (37);
			\draw[verma] (36) -- (45);
			\draw[verma] (37) -- (45);
			\draw[verma] (22) -- (34);
			\draw[verma] (22) -- (35);
			\draw[verma] (34) -- (44);
			\draw[verma] (35) -- (44);
			\draw[verma] (21) -- (32);
			\draw[verma] (21) -- (33);
			\draw[verma] (32) -- (43);
			\draw[verma] (33) -- (43);
			\draw[verma] (31) -- (41);
			\draw[verma] (31) -- (42);
			\draw[verma] (41) -- (5);
			\draw[verma] (42) -- (5);
		\end{tikzpicture}
		\ .
	\end{center}
	Unfortunately, it is not straightforward to add the arrows corresponding to (any of) the costandard filtrations because $\irred{\lambda}$ occurs in the middle row with multiplicity $4$ --- roughly speaking, it is difficult to identify which copy (more precisely, which linear combination of copies) of $\irred{\lambda}$ corresponds to the head or tail of the arrows in a costandard filtration.  Rather than deal with this, we shall be content with a partial Loewy diagram in which these $4$ copies of $\irred{\lambda}$ are collapsed to $\irred{\lambda}^{\oplus4}$.  With this, adding the costandard arrows is easy.  Finally, there are obvious arrows to add between the first and second, and fourth and fifth, rows.  \cref{cor:irredext} then rules out any more.
\end{proof}

\subsection{Tensor product decompositions} \label{tensorsec}

Recall that there are four types of irreducible $\qgrp$-modules in $\qgrpcat$, distinguished by their dimensions.  Including this dimension as a superscript (as in \cref{fig:LoewyM,fig:LoewyP}), these types are characterised by the following conditions on their highest weights:
\begin{equation} \label{eq:thefourtypes}
	\begin{matrix}
		\irr{1}{\lambda}: && \lambda_1, \lambda_2 \in 2\ZZ+1,\ \lambda_3 \in 2\ZZ; \\
		\irr{3}{\lambda}: && \lambda_i, \lambda_3 \in 2\ZZ+1,\ \lambda_j \in 2\ZZ && \text{($\{i,j\}=\{1,2\}$)}; \\
		\irr{4}{\lambda}: && \lambda_i \in 2\ZZ+1,\ \lambda_j, \lambda_k \notin \ZZ && \text{($\{i,j,k\}=\{1,2,3\}$)}; \\
		\irr{8}{\lambda}: && \lambda_1, \lambda_2, \lambda_3 \notin 2\ZZ+1.
	\end{matrix}
\end{equation}
Recall also that $\lambda_i$ is said to be atypical if $\lambda_i \in 2\ZZ+1$ and typical otherwise.  In this \lcnamecref{tensorsec}, the tensor product of (almost) every pair of irreducible $\qgrp$-modules in $\qgrpcat$ is explicitly decomposed (up to isomorphism) as a direct sum of indecomposables.

It is worth mentioning that these computations are simplified by the fact that $\qgrpcat$ is braided.  This follows immediately from the fact that $\qgrp$ admits a quasitriangular structure \cite[Theorem~41]{GP}, see also \cite[Equations~(4.8--9)]{R}.

The easiest tensor products to identify are those of the form $\irr{1}{\lambda} \otimes \irred{\mu}$.  These have dimension $\dim \irred{\mu}$ and possess a maximal vector of weight $\lambda+\mu$.  Since
\begin{equation} \label{eq:indexofsum}
	(\lambda+\mu)_i = \lambda_i + \mu_i - \killing{\rho}{\alpha_i}, \quad i=1,2,3,
\end{equation}
and $\lambda_i - \killing{\rho}{\alpha_i} \in 2\ZZ$ for each $i$, it follows that $\lambda+\mu$ satisfies the same set of conditions in \eqref{eq:thefourtypes} as $\mu$.  This leads to the following conclusion.

\begin{proposition} \label{prop:TP1xL}
	Given $\lambda, \mu \in \csub^*$ satisfying $\lambda_1, \lambda_2 \in 2\ZZ+1$, one has
	\begin{equation} \label{eq:TP1xL}
		\irr{1}{\lambda} \otimes \irred{\mu} \cong \irred{\lambda+\mu}.
	\end{equation}
\end{proposition}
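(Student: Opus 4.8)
The plan is to exploit the fact that $\irr{1}{\lambda}$ is one-dimensional, so that tensoring with it acts essentially as a twist and cannot destroy irreducibility. First I would identify $\irr{1}{\lambda}$ explicitly. Since it has a single weight space (of weight $\lambda$), the generators $X_{\pm j}$, which shift weights, must act as zero, while $H_j$ acts as $\lambda(H_j)$ and $K_{\gamma}$ as $\ii^{\killing{\gamma}{\lambda}}$; one checks that this is consistent with the relations precisely because $\lambda_1,\lambda_2\in2\ZZ+1$. Write $v_{\lambda}$ for a spanning vector of $\irr{1}{\lambda}$ and $m_{\mu}$ for a maximal vector generating $\irred{\mu}$.

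Next I would compute the $\qgrp$-action on $\irr{1}{\lambda}\otimes\irred{\mu}$ from the coproduct \eqref{eq:Uisl3hopf}. Because $X_j v_{\lambda}=0$, one finds $X_j(v_{\lambda}\otimes w)=v_{\lambda}\otimes X_j w$, so $v_{\lambda}\otimes m_{\mu}$ is a maximal vector of weight $\lambda+\mu$. Dually, $\Delta(X_{-j})=X_{-j}\otimes1+K_j^{-1}\otimes X_{-j}$ yields $X_{-j}(v_{\lambda}\otimes w)=\ii^{-\killing{\alpha_j}{\lambda}}\,v_{\lambda}\otimes X_{-j}w$, a \emph{nonzero} rescaling of $v_{\lambda}\otimes X_{-j}w$. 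Since $\irred{\mu}=\qgrpnilm\cdot m_{\mu}$, repeated application of the lowering generators to $v_{\lambda}\otimes m_{\mu}$ sweeps out all of $v_{\lambda}\otimes\irred{\mu}=\irr{1}{\lambda}\otimes\irred{\mu}$. Hence the tensor product is a highest-weight module of highest weight $\lambda+\mu$, and so surjects onto its unique irreducible quotient $\irred{\lambda+\mu}$.

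The proof is then completed by a dimension count. As already noted before the statement, \eqref{eq:indexofsum} together with $\lambda_i-\killing{\rho}{\alpha_i}\in2\ZZ$ forces $\lambda+\mu$ into the same atypicality class as $\mu$ in the sense of \eqref{eq:thefourtypes}, whence $\dim\irred{\lambda+\mu}=\dim\irred{\mu}$ by \cref{irreduciblecharacter}. On the other hand $\dim(\irr{1}{\lambda}\otimes\irred{\mu})=\dim\irred{\mu}$ because $\irr{1}{\lambda}$ is one-dimensional. The surjection $\irr{1}{\lambda}\otimes\irred{\mu}\onto\irred{\lambda+\mu}$ is therefore a map between spaces of equal finite dimension, hence an isomorphism, giving \eqref{eq:TP1xL}.

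I expect the only genuinely delicate point to be the cyclicity claim, namely that $v_{\lambda}\otimes m_{\mu}$ generates the whole tensor product; this rests on the lowering generators acting on $v_{\lambda}\otimes(\blank)$ by nonzero multiples of their action on $\irred{\mu}$, which is immediate from the coproduct. A more structural alternative sidesteps even this: one verifies that $\irr{1}{\lambda}$ is invertible in $\qgrpcat$ with inverse $\irr{1}{-\lambda}$ (both one-dimensional, and $\irr{1}{\lambda}\otimes\irr{1}{-\lambda}$ is one-dimensional of weight $0$, hence the monoidal unit), so that $\irr{1}{\lambda}\otimes\blank$ is an auto-equivalence of $\qgrpcat$ preserving irreducibility; the image of $\irred{\mu}$ is then an irreducible of highest weight $\lambda+\mu$, necessarily $\irred{\lambda+\mu}$.
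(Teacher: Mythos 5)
Your main argument is exactly the paper's: the tensor product has a maximal vector $v_{\lambda}\otimes m_{\mu}$ of weight $\lambda+\mu$, and \eqref{eq:indexofsum} places $\lambda+\mu$ in the same atypicality class as $\mu$, so \cref{irreduciblecharacter} forces $\dim\irred{\lambda+\mu}=\dim\irred{\mu}=\dim(\irr{1}{\lambda}\otimes\irred{\mu})$ and the highest-weight submodule is everything and is irreducible. Your explicit cyclicity check and the closing remark that $\irr{1}{\lambda}$ is invertible are correct but not needed beyond what the dimension count already gives.
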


Tensor products involving a typical irreducible $\irr{8}{\mu}$ are likewise relatively easy to identify because of the following facts:
\begin{itemize}
	\item $\qgrpcat$ is pivotal \cite[Lemma~17]{GPV}, so the projectives form a tensor ideal.
	\item Every projective is self-dual (\cref{selfdual}), thus injective, hence their extensions split.
	\item Projectives are uniquely determined (up to isomorphism) by their characters (\cref{projcharacter}).
\end{itemize}
The tensor product of any module of $\qgrpcat$ with an $\irr{8}{\mu}$ thus decomposes as a direct sum of projectives which may be identified from its character.

To illustrate how one identifies such a tensor product, consider $\irr{3}{\lambda} \otimes \irr{8}{\mu}$, so that the $\mu_i$, $i=1,2,3$, are typical (note that $\irr{1}{\lambda} \otimes \irr{8}{\mu}$ was already identified in \cref{prop:TP1xL}).  Without loss of generality, assume that $\lambda_1 \in 2\ZZ+1$ and $\lambda_2 \in 2\ZZ$.  Then, the character of the tensor product satisfies
\begin{align}
	\ch{\irr{3}{\lambda} \otimes \irr{8}{\mu}} &= \ch{\irr{3}{\lambda}} \ch{\irr{8}{\mu}} = (z^{\lambda} + z^{\lambda-\alpha_2} + z^{\lambda-\alpha_3}) \ch{\verma{\mu}} \notag \\
	&= \ch{\verma{\lambda+\mu}} + \ch{\verma{\lambda+\mu-\alpha_2}} + \ch{\verma{\lambda+\mu-\alpha_3}}.
\end{align}
It therefore remains to identify which projective has this character.  This splits into several cases.

When $\lambda+\mu$, $\lambda+\mu-\alpha_2$ and $\lambda+\mu-\alpha_3$ are all typical, the projective is $\pro{8}{\lambda+\mu} \oplus \pro{8}{\lambda+\mu-\alpha_2} \oplus \pro{8}{\lambda+\mu-\alpha_3}$, where the dimension of each indecomposable summand is likewise indicated in the superscript.  By \eqref{eq:indexofsum}, the typicality of all three weights is equivalent to $\mu_i \notin \ZZ$, $i=1,2,3$.

Suppose now that $\mu_1 \in 2\ZZ$, but $\mu_2, \mu_3 \notin \ZZ$.  Then, $\lambda+\mu$ is typical while $\lambda+\mu-\alpha_2$ and $\lambda+\mu-\alpha_3$ are atypical of degree $1$: $(\lambda+\mu-\alpha_2)_1 = \lambda_1 + \mu_1 \in 2\ZZ+1$ and $(\lambda+\mu-\alpha_3)_1 = \lambda_1 + \mu_1 - 2 \in 2\ZZ+1$.  It follows that $\pro{8}{\lambda+\mu}$ splits off as a direct summand of the tensor product and the remaining two Verma modules combine to form a $16$-dimensional indecomposable projective.  Comparing characters with those in \cref{fig:LoewyP}, the latter projective is identified as $\pro{16}{\lambda+\mu-\alpha_3}$.

The story is similar when $\mu_2$ or $\mu_3$ is the only integral label (which then lies in $2\ZZ$).  The last remaining possibility is $\mu_1, \mu_2, \mu_3 \in 2\ZZ$.  Then, $\lambda+\mu$, $\lambda+\mu-\alpha_2$ and $\lambda+\mu-\alpha_3$ are atypical of degree $2$, corresponding to irreducibles of dimension $3$, $1$ and $3$, respectively.  The only projective consistent with this is $\pro{24}{\lambda+\mu-\alpha_3}$, again by \cref{fig:LoewyP}.

\begin{proposition} \label{prop:TP3xM}
	Given $\lambda \in \csub^*$ satisfying $\lambda_i, \lambda_3 \in 2\ZZ+1$ and $\lambda_j \in 2\ZZ$, where $\{i,j\} = \{1,2\}$, and $\mu \in \csub^*$ typical, $\irr{3}{\lambda} \otimes \irr{8}{\mu}$ is isomorphic to
	\begin{align} \label{eq:TP3xM}
		\mu_1, \mu_2, \mu_3 \in 2\ZZ &: &  &\pro{24}{\lambda+\mu-\alpha_3}, \notag \\
		\mu_i \in 2\ZZ,\ \mu_j, \mu_3 \notin \ZZ &: & &\pro{8}{\lambda+\mu} \oplus \pro{16}{\lambda+\mu-\alpha_3}, \notag \\
		\mu_j \in 2\ZZ,\ \mu_i, \mu_3 \notin \ZZ &: & &\pro{16}{\lambda+\mu-\alpha_j} \oplus \pro{8}{\lambda+\mu-\alpha_3}, \\
		\mu_3 \in 2\ZZ,\ \mu_1, \mu_2 \notin \ZZ &: & &\pro{8}{\lambda+\mu-\alpha_j} \oplus \pro{16}{\lambda+\mu-\alpha_3}, \notag \\
		\mu_1, \mu_2, \mu_3 \notin \ZZ &: & &\pro{8}{\lambda+\mu} \oplus \pro{8}{\lambda+\mu-\alpha_j} \oplus \pro{8}{\lambda+\mu-\alpha_3}. \notag
	\end{align}
\end{proposition}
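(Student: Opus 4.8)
The plan is to exploit the fact that $\irr{8}{\mu}$ is not merely simple but also projective. Indeed, since $\mu$ is typical, \cref{thm:LoewyP} gives $\irr{8}{\mu} = \verma{\mu} = \proj{\mu}$. As recalled just above the \lcnamecref{prop:TP3xM}, $\qgrpcat$ is pivotal, so its projectives form a tensor ideal; hence $\irr{3}{\lambda} \otimes \irr{8}{\mu}$ is projective and therefore decomposes as a direct sum of indecomposable projectives. Because projectives in $\qgrpcat$ are determined up to isomorphism by their characters (\cref{projcharacter}), it suffices to compute the character of the tensor product and to recognise it as the character of a direct sum of the projectives catalogued in \cref{fig:LoewyP}.

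First I would compute the character. Using the expression for $\ch{\irr{3}{\lambda}}$ from \cref{irreduciblecharacter} and $\ch{\irr{8}{\mu}} = \ch{\verma{\mu}}$, multiplicativity of characters yields
\begin{equation}
	\ch{\irr{3}{\lambda} \otimes \irr{8}{\mu}}
	= \bigl( z^{\lambda} + z^{\lambda-\alpha_j} + z^{\lambda-\alpha_3} \bigr) \ch{\verma{\mu}}
	= \ch{\verma{\lambda+\mu}} + \ch{\verma{\lambda+\mu-\alpha_j}} + \ch{\verma{\lambda+\mu-\alpha_3}},
\end{equation}
where I have used $z^{\nu}\ch{\verma{\mu}} = \ch{\verma{\nu+\mu}}$, which follows at once from \eqref{eq:VermaChar}. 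The task is thus to partition this sum of three Verma characters into the Verma-character expansions of indecomposable projectives recorded in \cref{fig:LoewyP}.

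Next I would run the case analysis, in each case determining the atypicality type of the three weights $\lambda+\mu$, $\lambda+\mu-\alpha_j$ and $\lambda+\mu-\alpha_3$ by means of \eqref{eq:indexofsum} together with $\killing{\rho}{\alpha_i} = 1$ (for $i=1,2$) and $\killing{\rho}{\alpha_3} = 2$. For instance, when $\mu_1,\mu_2,\mu_3 \in 2\ZZ$, all three weights turn out to be atypical of degree $2$ with third index atypical; setting $\nu = \lambda+\mu-\alpha_3$ one checks $\nu_i,\nu_3 \in 2\ZZ+1$, and the three Verma characters then coincide exactly with the three comprising $\ch{\pro{24}{\nu}}$, forcing the single summand $\pro{24}{\lambda+\mu-\alpha_3}$. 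When exactly one of the three weights is typical (the cases with a single $\mu$-index in $2\ZZ$), that weight's Verma is itself an $8$-dimensional simple projective and splits off, while the remaining two weights are atypical of degree $1$; differing by a single root $\alpha_i$ (with $i$ the common atypical index, possibly $i=3$), their Verma characters assemble into one $16$-dimensional projective whose atypical index and apex weight I would read off from \eqref{eq:indexofsum} and match against \cref{fig:LoewyP}. When all three weights are typical (all $\mu$-indices non-integral), each Verma is a simple projective and the product is a direct sum of three $8$-dimensional projectives.

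The bulk of the work — and the only place demanding care — is the bookkeeping confirming that these groupings are both consistent and forced. The potential subtlety is the \emph{uniqueness} of the decomposition into indecomposable projectives: since the total module is projective and projectives are fixed by their characters (\cref{projcharacter}), it suffices to observe that each projective $\proj{\nu}$ in \cref{fig:LoewyP} carries $\verma{\nu}$ as its topmost Verma factor, so that the three highest weights above pin down the summands unambiguously once their atypicality types are known. I expect no genuine obstacle beyond verifying, case by case, that the shifts $-\alpha_j$ and $-\alpha_3$ relate the relevant weights in precisely the way the projective characters of \cref{fig:LoewyP} require.
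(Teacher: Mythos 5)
Your proposal is correct and follows essentially the same route as the paper: use pivotality to conclude that tensoring with the projective $\irr{8}{\mu}=\verma{\mu}=\proj{\mu}$ yields a projective module, expand the character as a sum of three Verma characters, and identify the indecomposable projective summands case by case from the Verma-character data of \cref{fig:LoewyP} via \cref{projcharacter}. The paper works through the same case split (which $\mu$-indices lie in $2\ZZ$) in the discussion immediately preceding the proposition, so there is nothing to add.
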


Explicit decompositions for $\irr{4}{\lambda} \otimes \irr{8}{\mu}$ and $\irr{8}{\lambda} \otimes \irr{8}{\mu}$ may be readily found using the same methods.  We omit the details.

\begin{proposition} \label{prop:TP4xM}
	Given $\lambda \in \csub^*$ satisfying $\lambda_i \in 2\ZZ+1$ and $\lambda_j, \lambda_3 \notin \ZZ$, where $\{i,j\} = \{1,2\}$, and $\mu \in \csub^*$ typical, $\irr{4}{\lambda} \otimes \irr{8}{\mu}$ is isomorphic to
	\begin{align} \label{eq:TP4xM}
		\mu_i, \lambda_j + \mu_j \in 2\ZZ,\ \lambda_3 + \mu_3 \in 2\ZZ+1 &: & &\pro{24}{\lambda+\mu-\alpha_3} \oplus \pro{8}{\lambda+\mu-\alpha_j-\alpha_3}, \notag \\
		\mu_i, \lambda_3 + \mu_3 \in 2\ZZ,\ \lambda_j + \mu_j \in 2\ZZ+1 &: & &\pro{8}{\lambda+\mu} \oplus \pro{24}{\lambda+\mu-\alpha_j-\alpha_3}, \notag \\
		\mu_i \in 2\ZZ,\ \lambda_j + \mu_j, \lambda_3 + \mu_3 \notin \ZZ &: & &\pro{8}{\lambda+\mu} \oplus \pro{16}{\lambda+\mu-\alpha_3} \oplus \pro{8}{\lambda+\mu-\alpha_j-\alpha_3}, \notag \\
		\lambda_j + \mu_j \in 2\ZZ,\ \mu_i, \lambda_3 + \mu_3 \notin \ZZ &: & &\pro{16}{\lambda+\mu-\alpha_j} \oplus \pro{8}{\lambda+\mu-\alpha_3} \oplus \pro{8}{\lambda+\mu-\alpha_j-\alpha_3}, \notag \\
		\lambda_j + \mu_j \in 2\ZZ+1,\ \mu_i, \lambda_3 + \mu_3 \notin \ZZ &: & &\pro{8}{\lambda+\mu} \oplus \pro{8}{\lambda+\mu-\alpha_3} \oplus \pro{16}{\lambda+\mu-\alpha_j-\alpha_3}, \\
		\lambda_3 + \mu_3 \in 2\ZZ,\ \mu_i, \lambda_j + \mu_j \notin \ZZ &: & &\pro{8}{\lambda+\mu} \oplus \pro{8}{\lambda+\mu-\alpha_j} \oplus \pro{16}{\lambda+\mu-\alpha_j-\alpha_3}, \notag \\
		\lambda_3 + \mu_3 \in 2\ZZ+1,\ \mu_i, \lambda_j + \mu_j \notin \ZZ &: & &\pro{8}{\lambda+\mu-\alpha_j} \oplus \pro{16}{\lambda+\mu-\alpha_3} \oplus \pro{8}{\lambda+\mu-\alpha_j-\alpha_3}, \notag \\
		\mu_1, \lambda_2 + \mu_2, \lambda_3 + \mu_3 \notin \ZZ &; & &\pro{8}{\lambda+\mu} \oplus \pro{8}{\lambda+\mu-\alpha_j} \oplus \pro{8}{\lambda+\mu-\alpha_3} \oplus \pro{8}{\lambda+\mu-\alpha_j-\alpha_3}. \notag
	\end{align}
\end{proposition}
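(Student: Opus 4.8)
The plan is to follow the strategy already used for \cref{prop:TP3xM}. Since $\mu$ is typical, $\irr{8}{\mu}=\verma{\mu}$ is projective, and the projectives form a tensor ideal (they are self-dual by \cref{selfdual}, hence injective, and $\qgrpcat$ is pivotal). Therefore $\irr{4}{\lambda} \otimes \irr{8}{\mu}$ is projective, so it decomposes as a direct sum of indecomposable projectives. As projectives are determined up to isomorphism by their characters (\cref{projcharacter}), the whole problem reduces to a character computation followed by matching against the projective Loewy diagrams of \cref{fig:LoewyP}.

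First I would compute the character. Taking the two typical indices of $\lambda$ to be $j$ and $3$, case~\ref{it:atyp1} of \cref{irreduciblecharacter} gives $\ch{\irr{4}{\lambda}} = z^{\lambda} + z^{\lambda-\alpha_j} + z^{\lambda-\alpha_3} + z^{\lambda-\alpha_j-\alpha_3}$. Using $z^{\nu}\ch{\verma{\mu}} = \ch{\verma{\nu+\mu}}$, multiplying by $\ch{\verma{\mu}}$ yields
\begin{equation}
	\ch{\irr{4}{\lambda} \otimes \irr{8}{\mu}}
	= \ch{\verma{\lambda+\mu}} + \ch{\verma{\lambda+\mu-\alpha_j}}
	+ \ch{\verma{\lambda+\mu-\alpha_3}} + \ch{\verma{\lambda+\mu-\alpha_j-\alpha_3}}.
\end{equation}
Thus the product is assembled from exactly four Verma modules; since each is $8$-dimensional, the total dimension is $32$, consistent with the dimensions of the decompositions listed in \eqref{eq:TP4xM}.

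Next I would classify, for each case, the atypicality type of the four highest weights $\lambda+\mu$, $\lambda+\mu-\alpha_j$, $\lambda+\mu-\alpha_3$ and $\lambda+\mu-\alpha_j-\alpha_3$. Expanding each of the twelve relevant indices via \eqref{eq:indexofsum}, using $\lambda_i \in 2\ZZ+1$, $\lambda_j, \lambda_3 \notin \ZZ$ and the typicality of $\mu$, one sees that the classification is controlled by just three inputs: whether $\mu_i \in 2\ZZ$, and the classes of $\lambda_j+\mu_j$ and $\lambda_3+\mu_3$ in $\{2\ZZ, 2\ZZ+1, \CC\setminus\ZZ\}$. The identity $\lambda_3+\mu_3 = (\lambda_i+\mu_i)+(\lambda_j+\mu_j)$, with $\lambda_i+\mu_i$ an odd integer when $\mu_i\in2\ZZ$ and a non-integer when $\mu_i\notin\ZZ$, cuts the a~priori possibilities down to exactly the eight combinations appearing in \eqref{eq:TP4xM} (three when $\mu_i\in2\ZZ$, five when $\mu_i\notin\ZZ$). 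Tracking parities then labels each of the four weights as typical or atypical of degree $1$ or $2$, recording in the latter case whether the $\alpha_3$-index is atypical, and hence fixes the dimension of each irreducible head.

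Finally I would group the four Vermas into projectives by comparing with the Verma content recorded in \cref{fig:LoewyP}: a degree-$2$ projective $\pro{24}{}$ accounts for three Vermas, a degree-$1$ projective $\pro{16}{}$ for the pair $\verma{\nu},\verma{\nu+\alpha_i}$, and a typical or degree-$2$-with-$\lambda_3$-typical projective $\pro{8}{}$ for a single Verma, so the partition of the four Vermas is essentially forced once their types are known. In each case I would check that the required weight shifts ($+\alpha_3$, $+\alpha_i$, etc.) actually relate the four highest weights as demanded by the Loewy diagrams, so that the pieces genuinely fit into the claimed projectives; uniqueness of the answer follows because the $[\proj{\nu}]$ are linearly independent in the Grothendieck group (each has simple head $\irr{}{\nu}$ and leading Verma $\verma{\nu}$), so matching characters pins down the multiset of summands. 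The one real point of care, and the main obstacle, is precisely this bookkeeping: in the mixed cases where two Vermas share the same atypicality type (e.g.\ two degree-$1$ Vermas differing by $\alpha_i$), one must verify they pair off into a single $\pro{16}{}$ with the correct highest weight rather than merely matching total dimensions, and one must keep the symmetric subcases $i=1,2$ (related by the Dynkin diagram automorphism) straight throughout.
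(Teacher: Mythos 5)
Your proposal is correct and follows exactly the method the paper prescribes: since $\mu$ is typical the product is projective, its character is the sum of the four Verma characters $\ch{\verma{\lambda+\mu}}$, $\ch{\verma{\lambda+\mu-\alpha_j}}$, $\ch{\verma{\lambda+\mu-\alpha_3}}$, $\ch{\verma{\lambda+\mu-\alpha_j-\alpha_3}}$, and \cref{projcharacter} together with the Verma content of \cref{fig:LoewyP} pins down the summands. The paper explicitly omits the details for this case, stating it is handled by the same method as \cref{prop:TP3xM}, which is precisely what you carry out.
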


\begin{proposition} \label{prop:TP4XM'}
	Given $\lambda \in \csub^*$ satisfying $\lambda_1, \lambda_2 \notin \ZZ$ and $\lambda_3 \in 2\ZZ+1$, and $\mu \in \csub^*$ typical, $\irr{4}{\lambda} \otimes \irr{8}{\mu}$ is isomorphic to
	\begin{align} \label{eq:TP4xM'}
		\lambda_i + \mu_i, \mu_3 \in 2\ZZ,\ \lambda_j + \mu_j \in 2\ZZ+1 &: & & \pro{8}{\lambda+\mu-\alpha_j} \oplus \pro{24}{\lambda+\mu-\alpha_3}, \notag \\
		\mu_3 \in 2\ZZ,\ , \lambda_1 + \mu_1, \lambda_2 + \mu_2 \notin \ZZ &: & & \pro{8}{\lambda+\mu-\alpha_1} \oplus \pro{8}{\lambda+\mu-\alpha_2} \oplus \pro{16}{\lambda+\mu-\alpha_3}, \notag \\
		\lambda_i + \mu_i \in 2\ZZ,\ \lambda_j + \mu_j, \mu_3 \notin \ZZ &: & & \pro{16}{\lambda+\mu-\alpha_i} \oplus \pro{8}{\lambda+\mu-\alpha_j} \oplus \pro{8}{\lambda+\mu-\alpha_3}, \\
		\lambda_i + \mu_i \in 2\ZZ+1,\ \lambda_j + \mu_j, \mu_3 \notin \ZZ &: & & \pro{8}{\lambda+\mu} \oplus \pro{8}{\lambda+\mu-\alpha_i} \oplus \pro{16}{\lambda+\mu-\alpha_3}, \notag \\
		\lambda_1 + \mu_1, \lambda_2 + \mu_2, \mu_3 \notin \ZZ &; & & \pro{8}{\lambda+\mu} \oplus \pro{8}{\lambda+\mu-\alpha_1} \oplus \pro{8}{\lambda+\mu-\alpha_2} \oplus \pro{8}{\lambda+\mu-\alpha_3}, \notag
	\end{align}
	where $\{i,j\}=\{1,2\}$.
\end{proposition}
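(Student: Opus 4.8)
The plan is to mirror the method already used for \cref{prop:TP3xM}: compute the character of the product, express it as a sum of Verma characters, and then exploit projectivity to read off the decomposition into indecomposables. First I would record the character of the left-hand factor. Here $\lambda$ is atypical of degree $1$ with $\lambda_3$ the atypical index, so \cref{irreduciblecharacter} and $\alpha_3 = \alpha_1+\alpha_2$ give
\begin{equation*}
	\ch{\irr{4}{\lambda}} = z^{\lambda} + z^{\lambda-\alpha_1} + z^{\lambda-\alpha_2} + z^{\lambda-\alpha_3}.
\end{equation*}
Multiplying by $\ch{\irr{8}{\mu}} = \ch{\verma{\mu}}$ and recognising each resulting term as a Verma character via \eqref{eq:VermaChar} yields
\begin{equation*}
	\ch{\irr{4}{\lambda} \otimes \irr{8}{\mu}} = \ch{\verma{\lambda+\mu}} + \ch{\verma{\lambda+\mu-\alpha_1}} + \ch{\verma{\lambda+\mu-\alpha_2}} + \ch{\verma{\lambda+\mu-\alpha_3}},
\end{equation*}
a sum of four Verma characters, so the product is $32$-dimensional.

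Next I would reduce to a combinatorial matching problem. Since $\mu$ is typical, $\irr{8}{\mu}$ is projective by \cref{thm:LoewyP}, and as $\qgrpcat$ is pivotal the projectives form a tensor ideal; hence $\irr{4}{\lambda} \otimes \irr{8}{\mu}$ is projective. These projectives are self-dual by \cref{selfdual}, hence also injective, so the product decomposes as a direct sum of indecomposable projectives, and by \cref{projcharacter} this decomposition is fixed by the character. It therefore remains to partition the four Verma characters above into the Verma-character sums attached to the indecomposables of \cref{fig:LoewyP}, where a typical $\pro{8}{\sigma}$ accounts for one Verma, a $\pro{16}{\sigma}$ for two and a $\pro{24}{\sigma}$ for three.

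To carry out the partition I would determine the atypicality type of each weight $\lambda+\mu-\gamma$, $\gamma \in \{0,\alpha_1,\alpha_2,\alpha_3\}$. Writing $\nu = \lambda+\mu$, \eqref{eq:indexofsum} gives $\nu_1 = \lambda_1+\mu_1-1$, $\nu_2 = \lambda_2+\mu_2-1$ and $\nu_3 = \lambda_3+\mu_3-2$, and the indices of the shifted weights follow by subtracting the relevant Cartan pairings. Since $\lambda_1,\lambda_2 \notin \ZZ$ and $\lambda_3 \in 2\ZZ+1$, the index $\nu_3$ is atypical exactly when $\mu_3 \in 2\ZZ$, while the integrality and parity of $\lambda_1+\mu_1$ and $\lambda_2+\mu_2$, which sum to $\lambda_3+\mu_3$, govern the remaining types. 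A short check confirms that the only admissible configurations are the five mutually exclusive and exhaustive cases in the statement, and that in each the four Vermas admit a unique grouping: for example, when $\mu_3 \in 2\ZZ$ and $\lambda_1+\mu_1, \lambda_2+\mu_2 \notin \ZZ$, the weights $\nu-\alpha_1$ and $\nu-\alpha_2$ are typical whereas $\nu$ and $\nu-\alpha_3$ are atypical of degree $1$ and linked by $\alpha_3$, producing $\pro{8}{\nu-\alpha_1} \oplus \pro{8}{\nu-\alpha_2} \oplus \pro{16}{\nu-\alpha_3}$.

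I expect the main obstacle to be purely bookkeeping: checking that the five cases exhaust all admissible $(\lambda,\mu)$ and, in each, tracking the indices of the four shifted weights without arithmetic slips so as to identify the correct highest weights $\sigma$ of the projective summands. The one point that needs an argument rather than computation is the uniqueness of the grouping, but this is automatic: the Verma characters are linearly independent (their highest weights are distinct), so the multiset of Verma factors is determined by the character, and the atypicality types then dictate which Vermas must occupy a common indecomposable projective. No ideas beyond those already used for \cref{prop:TP1xL,prop:TP3xM,prop:TP4xM} are needed.
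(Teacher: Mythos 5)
Your proposal is correct and follows essentially the same route as the paper, which explicitly states that this decomposition "may be readily found using the same methods" as \cref{prop:TP3xM}: write $\ch{\irr{4}{\lambda}}\ch{\irr{8}{\mu}}$ as the sum of the four Verma characters $\ch{\verma{\lambda+\mu-\gamma}}$, $\gamma\in\{0,\alpha_1,\alpha_2,\alpha_3\}$, invoke the tensor-ideal/injectivity/character-determines-projective facts to conclude the product is a direct sum of indecomposable projectives, and then sort the Vermas by the atypicality types of the shifted weights using \cref{fig:LoewyP}. Your index computations (e.g.\ $\nu_3=\lambda_3+\mu_3-2$ atypical iff $\mu_3\in2\ZZ$) and your sample case check out against the stated decompositions.
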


\begin{proposition} \label{prop:TP8xM}
	Given typical $\lambda, \mu \in \csub^*$, $\irr{8}{\lambda} \otimes \irr{8}{\mu}$ is isomorphic to
	\begin{align} \label{eq:TP8xM}
		\lambda_1 + \mu_1, \lambda_2 + \mu_2, \lambda_3 + \mu_3 \in 2\ZZ &: & &2 \pro{8}{\lambda+\mu-\alpha_3} \oplus \pro{48}{\lambda+\mu-2\alpha_3}, \notag \\
		\lambda_i + \mu_i \in 2\ZZ,\ \lambda_j + \mu_j, \lambda_3 + \mu_3 \in 2\ZZ+1 &: & &\pro{8}{\lambda+\mu-\alpha_j} \oplus \pro{8}{\lambda+\mu-\alpha_i-\alpha_3} \oplus \pro{24}{\lambda+\mu-\alpha_3} \oplus \pro{24}{\lambda+\mu-2\alpha_3}, \notag \\
		\lambda_1 + \mu_1, \lambda_2 + \mu_2 \in 2\ZZ+1,\ \lambda_3 + \mu_3 \in 2\ZZ &: & &\pro{8}{\lambda+\mu} \oplus \pro{24}{\lambda+\mu-\alpha_1-\alpha_3} \oplus \pro{24}{\lambda+\mu-\alpha_2-\alpha_3} \oplus \pro{8}{\lambda+\mu-2\alpha_3}, \notag \\
		\lambda_i + \mu_i \in 2\ZZ,\ \lambda_j + \mu_j, \lambda_3 + \mu_3 \notin \ZZ &: & &\pro{16}{\lambda+\mu-\alpha_i} \oplus \pro{8}{\lambda+\mu-\alpha_j} \oplus 2 \pro{8}{\lambda+\mu-\alpha_3} \notag \\
		&&&\phantom{\pro{16}{\lambda+\mu-\alpha_i}} \oplus \pro{8}{\lambda+\mu-\alpha_i-\alpha_3} \oplus \pro{16}{\lambda+\mu-2\alpha_3}, \notag \\
		\lambda_i + \mu_i \in 2\ZZ+1,\ \lambda_j + \mu_j, \lambda_3 + \mu_3 \notin \ZZ &: & &\pro{8}{\lambda+\mu} \oplus \pro{8}{\lambda+\mu-\alpha_1} \oplus \pro{16}{\lambda+\mu-\alpha_3} \notag \\
		&&&\phantom{\pro{8}{\lambda+\mu}} \oplus \pro{16}{\lambda+\mu-\alpha_i-\alpha_3} \oplus \pro{8}{\lambda+\mu-\alpha_j-\alpha_3} \oplus \pro{8}{\lambda+\mu-2\alpha_3}, \\
		\lambda_3 + \mu_3 \in 2\ZZ,\ \lambda_1 + \mu_1, \lambda_2 + \mu_2 \notin \ZZ &: & &\pro{8}{\lambda+\mu} \oplus 2 \pro{8}{\lambda+\mu-\alpha_3} \oplus \pro{16}{\lambda+\mu-\alpha_1-\alpha_3} \notag \\
		&&&\phantom{\pro{8}{\lambda+\mu}} \oplus \pro{16}{\lambda+\mu-\alpha_2-\alpha_3} \oplus \pro{8}{\lambda+\mu-2\alpha_3}, \notag \\
		\lambda_3 + \mu_3 \in 2\ZZ+1,\ \lambda_1 + \mu_1, \lambda_2 + \mu_2 \notin \ZZ &: & &\pro{8}{\lambda+\mu-\alpha_1} \oplus \pro{8}{\lambda+\mu-\alpha_2} \oplus \pro{16}{\lambda+\mu-\alpha_3} \notag \\
		&&&\phantom{\pro{8}{\lambda+\mu-\alpha_1}} \oplus \pro{8}{\lambda+\mu-\alpha_1-\alpha_3} \oplus \pro{8}{\lambda+\mu-\alpha_2-\alpha_3} \oplus \pro{16}{\lambda+\mu-2\alpha_3}, \notag \\
		\lambda_1 + \mu_1, \lambda_2 + \mu_2, \lambda_3 + \mu_3 \notin \ZZ &: & &\pro{8}{\lambda+\mu} \oplus \pro{8}{\lambda+\mu-\alpha_1} \oplus \pro{8}{\lambda+\mu-\alpha_2} \oplus 2 \pro{8}{\lambda+\mu-\alpha_3} \notag \\
		&&&\phantom{\pro{8}{\lambda+\mu}} \oplus \pro{8}{\lambda+\mu-\alpha_1-\alpha_3} \oplus \pro{8}{\lambda+\mu-\alpha_2-\alpha_3} \oplus \pro{8}{\lambda+\mu-2\alpha_3}, \notag
	\end{align}
	where $\{i,j\}=\{1,2\}$ and $2V$ means $V^{\oplus 2}$.
\end{proposition}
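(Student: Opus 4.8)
The plan is to reduce the statement to a single character computation. Since $\lambda$ and $\mu$ are typical, each of $\irr{8}{\lambda}$ and $\irr{8}{\mu}$ is simultaneously irreducible, Verma, and projective, $\irr{8}{\lambda}=\verma{\lambda}=\proj{\lambda}$, by \cref{thm:LoewyP}. As $\qgrpcat$ is pivotal its projectives form a tensor ideal, so $T := \irr{8}{\lambda}\otimes\irr{8}{\mu}$ is projective, hence a direct sum of indecomposable projectives; and by \cref{projcharacter} a projective is determined up to isomorphism by its character. It therefore suffices, in each listed case, to exhibit a direct sum $R$ of indecomposable projectives with $\ch R=\ch T$, for then $T\cong R$. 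No module-theoretic input beyond the character is needed.

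First I would compute $\ch T=\ch{\verma{\lambda}}\,\ch{\verma{\mu}}=z^{\lambda+\mu}\prod_{i=1}^{3}(1+z^{-\alpha_i})^2$ from \eqref{eq:VermaChar}. Factoring out one copy of $\prod_i(1+z^{-\alpha_i})$ exhibits this as the sum of the eight Verma characters $\ch{\verma{\lambda+\mu-\gamma}}$, where $\gamma$ runs over the subset-sums of $\{\alpha_1,\alpha_2,\alpha_3\}$ and the value $\gamma=\alpha_3$ (occurring both as $\{\alpha_3\}$ and as $\{\alpha_1,\alpha_2\}$, since $\alpha_3=\alpha_1+\alpha_2$) is counted twice. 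Because the character of every indecomposable projective is an explicit sum of Verma characters --- equal to $\ch{\verma{\nu}}$ in the typical case and to the sums displayed in \cref{fig:LoewyP} otherwise --- matching $\ch R$ to $\ch T$ amounts to partitioning this fixed multiset of eight Vermas into the Verma contents of indecomposable projectives.

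To perform the partition I would use \eqref{eq:indexofsum}, with $\killing{\rho}{\alpha_1}=\killing{\rho}{\alpha_2}=1$ and $\killing{\rho}{\alpha_3}=2$, to convert the integrality and parity hypotheses on the $\lambda_i+\mu_i$ into the atypicality type --- via \eqref{eq:thefourtypes} --- of $\lambda+\mu$ and of each neighbour $\lambda+\mu-\gamma$. For instance, when $\lambda_1+\mu_1,\lambda_2+\mu_2,\lambda_3+\mu_3\in2\ZZ$ the weight $\lambda+\mu$ is atypical of degree $2$ with indices $1,2$ atypical, whereas $\lambda+\mu-\alpha_3$ is typical; \cref{fig:LoewyP} then shows that the six Vermas other than the two copies of $\verma{\lambda+\mu-\alpha_3}$ assemble into $\pro{48}{\lambda+\mu-2\alpha_3}$, leaving $2\,\pro{8}{\lambda+\mu-\alpha_3}$, as claimed. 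Each remaining case is treated identically: determine the type of every label, read off its Verma content from \cref{fig:LoewyP}, and check that these contents tile the eight Vermas exactly (respecting the doubled multiplicity of $\verma{\lambda+\mu-\alpha_3}$).

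The genuine obstacle is the bookkeeping in the degree-$2$ cases, where the projectives of dimension $24$ and $48$ each contribute three- or six-term Verma contents spread across several root shifts. There one must confirm both that each proposed label really has the claimed atypicality type and that the contents tile the multiset without overlap; the doubled factor $\verma{\lambda+\mu-\alpha_3}$ and the four one-dimensional composition factors in the middle layer of the dimension-$48$ projective are where errors are easiest to make. Once the identity $\ch R=\ch T$ is verified in every case, \cref{projcharacter} yields $T\cong R$ and the proposition follows.
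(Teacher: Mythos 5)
Your proposal is correct and follows essentially the same route as the paper: the authors establish exactly these three facts (pivotality makes projectives a tensor ideal, projectives split, and \cref{projcharacter} determines them by character), illustrate the resulting character-matching procedure on $\irr{3}{\lambda}\otimes\irr{8}{\mu}$, and then state that \cref{prop:TP8xM} "may be readily found using the same methods," omitting the details you supply. Your reduction of $\ch{\verma{\lambda}}\ch{\verma{\mu}}$ to the multiset of eight Verma characters (with $\verma{\lambda+\mu-\alpha_3}$ doubled) and the case-by-case tiling against the Verma contents in \cref{fig:LoewyP} is precisely the intended argument.
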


It only remains to decompose the tensor products of the $3$- and $4$-dimensional irreducibles.  Here, identifying projective summands is not always sufficient and one must sometimes resort to brute force computation.  However, there are still many cases for which projectivity is very helpful.

As an illustration, consider the decomposition of $\irr{4}{\lambda} \otimes \irr{4}{\mu}$, where $\lambda_1, \mu_1 \in 2\ZZ+1$ and $\lambda_2, \mu_2 \notin \ZZ$.  This tensor product obviously has a maximal vector $v$ of weight $\lambda+\mu$ that satisfies $(\lambda+\mu)_1 \in 2\ZZ+1$.  Assume that $\lambda+\mu$ is atypical of degree $1$, so $(\lambda+\mu)_2, (\lambda+\mu)_3 \notin \ZZ$.  Then, $v$ generates a submodule of dimension at least $4$ in which $X_{-2} v$ is not maximal.

Now, the weight $\lambda+\mu-\alpha_2$ has multiplicity $2$ in the tensor product while $\lambda+\mu$ has multiplicity $1$.  $X_2$ must therefore annihilate some vector $w$ of weight $\lambda+\mu-\alpha_2$.  As $\lambda+\mu+\alpha_1-\alpha_2$ is not a weight of $\irr{4}{\lambda} \otimes \irr{4}{\mu}$, $w$ is maximal.  But, $\lambda+\mu-\alpha_2$ is typical, hence $w$ generates a copy of the irreducible projective $\pro{8}{\lambda+\mu-\alpha_2}$ which is therefore a direct summand of the tensor product.

Of the $8$ weights (counted with multiplicity) not accounted for by the copy of $\pro{8}{\lambda+\mu-\alpha_2}$, at least $4$ belong to the submodule, isomorphic to $\irr{4}{\lambda+\mu}$, generated by $v$.  That leaves $4$, the highest of which is easily checked to be $\lambda+\mu-\alpha_2-\alpha_3$.  However, this is also atypical of degree $1$, hence the corresponding composition factor is $\irr{4}{\lambda+\mu-\alpha_2-\alpha_3}$ which accounts for all the remaining weights.

The tensor product therefore has three composition factors: $\irr{4}{\lambda+\mu}$, $\pro{8}{\lambda+\mu-\alpha_2}$ and $\irr{4}{\lambda+\mu-\alpha_2-\alpha_3}$.  The only unknown left is whether the $4$-dimensional irreducibles are direct summands or parts of an $8$-dimensional indecomposable.  However, the latter is impossible, by \cref{cor:irredext}.

Similar arguments result in the following decompositions.

\begin{proposition} \label{prop:TP34x34}
	\leavevmode
	\begin{itemize}
		\item Given $\lambda, \mu, \lambda+\mu \in \csub^*$ atypical of degree $1$, $\irr{4}{\lambda} \otimes \irr{4}{\mu}$ is isomorphic to
	\begin{align}
		\lambda_i, \mu_j \in 2\ZZ+1 &: & &\pro{8}{\lambda+\mu} \oplus \pro{8}{\lambda+\mu-\alpha_k} & \textup{(} \{i,j,k\} &= \{1,2,3\} \textup{)}, \notag \\
		\lambda_i, \mu_i \in 2\ZZ+1 &: & &\irr{4}{\lambda+\mu} \oplus \pro{8}{\lambda+\mu-\alpha_j} \oplus \irr{4}{\lambda+\mu-\alpha_j-\alpha_3} & \textup{(} \{i,j\} &= \{1,2\} \textup{)}, \label{eq:TP4x4} \\
		\lambda_3, \mu_3 \in 2\ZZ+1 &: & &\pro{8}{\lambda+\mu} \oplus \irr{4}{\lambda+\mu-\alpha_1} \oplus \irr{4}{\lambda+\mu-\alpha_2}. \notag
	\end{align}
	\item Given $\lambda \in \csub^*$ atypical of degree $2$ with $\lambda_3 \in 2\ZZ+1$ and $\mu$ atypical of degree $1$, $\irr{3}{\lambda} \otimes \irr{4}{\mu}$ is isomorphic to
	\begin{equation}
		\begin{aligned} \label{eq:TP3x4}
			\lambda_i, \mu_j \in 2\ZZ+1 &: &&& &\pro{8}{\lambda+\mu} \oplus \irr{4}{\lambda+\mu-\alpha_k} &&& \textup{(} i \in \{1,2\},\ \{i,j,k\} &= \{1,2,3\} \textup{)}, \\
			\lambda_i, \mu_i \in 2\ZZ+1 &: &&& &\irr{4}{\lambda+\mu} \oplus \pro{8}{\lambda+\mu-\alpha_j} &&& \textup{(} \{i,j\} &= \{1,2\} \textup{)}.
		\end{aligned}
	\end{equation}
	\item Given $\lambda, \mu \in \csub^*$ atypical of degree $2$ with $\lambda_i, \lambda_3, \mu_j, \mu_3 \in 2\ZZ+1$, where $\{i,j\} = \{1,2\}$, one has
	\begin{equation} \label{eq:TP3x3}
		\irr{3}{\lambda} \otimes \irr{3}{\mu} \cong \pro{8}{\lambda+\mu} \oplus \irr{1}{\lambda+\mu-\alpha_3}.
	\end{equation}
	\end{itemize}
\end{proposition}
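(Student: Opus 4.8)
The plan is to apply, case by case, the character-plus-maximal-vector technique illustrated above for $\irr{4}{\lambda} \otimes \irr{4}{\mu}$. For each product I would first express its character as the product of the relevant irreducible characters from \cref{irreduciblecharacter}, then use \eqref{eq:indexofsum} to read off the typicality of $\nu = \lambda+\mu$ and of its shifts $\nu-\alpha_k$. A maximal vector of typical weight generates an irreducible projective, necessarily eight-dimensional, which is injective by \cref{selfdual} and hence splits off as a direct summand; such summands are identified by their characters using \cref{projcharacter}. Whatever remains after removing all projective summands is read from the residual character, and \cref{cor:irredext} is then invoked to decide whether the surviving irreducibles split off individually or combine into a larger indecomposable.

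The third bullet is the cleanest instance. The hypotheses $\lambda_i,\lambda_3,\mu_j,\mu_3 \in 2\ZZ+1$ (with $\{i,j\}=\{1,2\}$, whence $\lambda_j,\mu_i \in 2\ZZ$) force, through \eqref{eq:indexofsum} and $\killing{\rho}{\alpha_k}\in\{1,1,2\}$, that $\nu_1,\nu_2,\nu_3 \in 2\ZZ$; thus $\nu$ is typical. The maximal vector of weight $\nu$ in $\irr{3}{\lambda}\otimes\irr{3}{\mu}$ therefore generates a copy of $\verma{\nu}=\irr{8}{\nu}$, which is irreducible by \cref{irredM} and projective by \cref{thm:LoewyP}. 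Being injective, this copy splits off; as the product has dimension $9$, its complement is one-dimensional and so must be $\irr{1}{\nu-\alpha_3}$. No extension analysis is needed.

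For the first two bullets I would run the same routine. The products $\irr{3}{\lambda}\otimes\irr{4}{\mu}$ are twelve-dimensional: once \eqref{eq:indexofsum} shows which of $\nu$ or $\nu-\alpha_j$ is typical, splitting off the corresponding eight-dimensional projective leaves a single four-dimensional irreducible, determined uniquely by its character. The products $\irr{4}{\lambda}\otimes\irr{4}{\mu}$ are sixteen-dimensional: in the subcase where $\lambda$ and $\mu$ are atypical at distinct indices, $\nu$ is typical and the product is a sum of two eight-dimensional irreducibles, while in the subcases $\lambda_i,\mu_i\in2\ZZ+1$ and $\lambda_3,\mu_3\in2\ZZ+1$ exactly one eight-dimensional projective splits off and two four-dimensional irreducibles survive.

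The main obstacle is precisely this last situation, where the two surviving four-dimensional factors must be shown to be direct summands rather than the head and socle of an eight-dimensional indecomposable. Here \cref{cor:irredext} is decisive: the two surviving highest weights differ by $\alpha_j+\alpha_3$ (in the subcase $\lambda_i,\mu_i\in2\ZZ+1$) or by $\alpha_1-\alpha_2$ (in the subcase $\lambda_3,\mu_3\in2\ZZ+1$), and neither of these lies among the weight differences permitting a non-split extension between irreducibles of the relevant degree-one atypicality. The factors therefore split, yielding the stated decompositions. The only step demanding genuine care is the preliminary check, via the weight multiplicities, that a maximal vector of the requisite typical weight actually exists, so that the projective summand can be extracted in the first place.
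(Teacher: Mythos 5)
Your proposal runs along exactly the lines of the paper's own proof, which consists of the worked example preceding the proposition (split off an injective irreducible projective generated by a maximal vector of typical weight, identify the residue by its character, and use \cref{cor:irredext} to decide whether the surviving four-dimensional factors split) together with the remark that ``similar arguments'' handle the remaining cases. Your handling of the $\irr{3}{\lambda}\otimes\irr{4}{\mu}$ and $\irr{3}{\lambda}\otimes\irr{3}{\mu}$ products, and your extension analysis in the first bullet (the differences $\alpha_j+\alpha_3$ and $\alpha_1-\alpha_2$ are indeed not of the form $\pm\alpha_m$ permitted by \cref{cor:irredext}), are correct instances of those arguments.

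There is, however, one false deduction. In the first bullet's distinct-index sub-case you claim that $\lambda_i,\mu_j\in2\ZZ+1$ with $i\neq j$ makes $\nu=\lambda+\mu$ typical. By \cref{eq:indexofsum} these conditions only give $\nu_i,\nu_j\notin\ZZ$; the remaining index $\nu_k$ may be odd, even or non-integral, so typicality cannot be read off. Worse, the proposition's blanket hypothesis that $\lambda+\mu$ be atypical of degree $1$ actually \emph{forces} $\nu_k\in2\ZZ+1$ in this sub-case, and then the asserted summand $\pro{8}{\nu}$ does not exist (by \cref{thm:LoewyP}, every projective with atypical highest weight has dimension $16$, $24$ or $48$), while the product instead has the character of the indecomposable $\pro{16}{\nu-\alpha_k}$ --- a situation covered neither by this proposition nor by \cref{prop:TP4x4}. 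The third sub-case suffers the same defect: $\lambda_3,\mu_3\in2\ZZ+1$ gives $\nu_3\in2\ZZ$ and $\nu_1+\nu_2=\nu_3$, so $\nu$ can never be atypical of degree exactly $1$ there. The stated decompositions in these two sub-cases are correct precisely when $\lambda+\mu$ is typical, which is the situation in which the paper itself invokes them in \cref{sec:KLfusion}; so you must impose typicality of $\lambda+\mu$ as part of those case hypotheses rather than claim to derive it. This flaw is inherited from, and shared with, the imprecise statement of the proposition itself; with that amendment your argument is complete, and note that in the second and third bullets no such issue arises, since there the stated parities really do determine the typicality of all relevant weights, exactly as you assert.
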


The remaining tensor products are $\irr{4}{\lambda} \otimes \irr{4}{\mu}$, with $\lambda+\mu$ atypical of degree $2$, and $\irr{3}{\lambda} \otimes \irr{3}{\mu}$, with $\lambda_i, \lambda_3, \mu_i, \mu_3 \in 2\ZZ+1$.  It seems that there is no easy way to identify these decompositions except by brute force computation.  We outline the results below.

\begin{proposition} \label{prop:TP4x4}
	\leavevmode
	\begin{itemize}
		\item Given $\lambda,\mu \in \mathfrak{h}^*$ atypical of degree $1$ and $\lambda+\mu$ atypical of degree of $2$, $\irr{4}{\lambda} \otimes \irr{4}{\mu}$ is isomorphic to
		\begin{align}
			\lambda_i, \mu_i, (\lambda+\mu)_i, (\lambda+\mu)_j \in 2\ZZ+1 &: & &K_{\lambda+\mu-\alpha_j}^{(16)} & \textup{(} i,j &\in \{1,2\} \textup{)}, \notag \\
			\lambda_i, \mu_i, (\lambda+\mu)_i, (\lambda+\mu)_3 \in 2\ZZ+1 &: & &R_{\lambda+\mu-\alpha_j-\alpha_3}^{(8)} \oplus \pro{8}{\lambda+\mu-\alpha_j} & \textup{(} i,j &\in \{1,2\} \textup{)}, \\
			\lambda_3, \mu_3, (\lambda+\mu)_1, (\lambda+\mu)_2 \in 2\ZZ+1 &: & &S_{\lambda+\mu}^{(16)} %S_{\lambda+\mu}^{(15)} \oplus \irr{1}{\lambda-2\alpha_3}
			, \notag
		\end{align}
		where $K_{\lambda+\mu-\alpha_j}^{(16)}$ is an indecomposable quotient of $\pro{24}{\lambda+\mu-\alpha_j} \oplus \pro{24}{\lambda+\mu-\alpha_j-\alpha_3}$, $R_{\lambda+\mu-\alpha_j-\alpha_3}^{(8)}$ is an indecomposable quotient of $\pro{48}{\lambda+\mu-\alpha_j-\alpha_3}$ and $S_{\lambda+\mu}^{(16)}$ is an indecomposable quotient of $\pro{24}{\lambda+\mu-\alpha_1} \oplus \pro{24}{\lambda+\mu-\alpha_2}$. %$S_{\lambda+\mu}^{(15)}$ is an indecomposable subquotient of $P^{(48)}_{\lambda+\mu}$.
		Their (unique) Loewy diagrams are pictured in \cref{fig:LoewyKRSN}.
		\item Given $\lambda,\mu \in \mathfrak{h}^*$ atypical of degree $2$ with $\lambda_i, \lambda_3, \mu_i, \mu_3 \in 2\ZZ+1$, for some $i \in \{1,2\}$, one has
		\begin{equation} \label{eq:TP3x3'}
			\irr{3}{\lambda} \otimes \irr{3}{\mu} \cong \quo{9}{\lambda+\mu-\alpha_j},
		\end{equation}
		where $\quo{9}{\lambda+\mu-\alpha_j}$ is an indecomposable quotient of $\pro{24}{\lambda+\mu-\alpha_j}$, $\{i,j\} = \{1,2\}$.  Its (unique) Loewy diagram is again pictured in \cref{fig:LoewyKRSN}.
	\end{itemize}
\end{proposition}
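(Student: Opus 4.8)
The plan is to treat both parts by a single explicit, representation-theoretic method. The key point is that, unlike the products handled in \cref{prop:TP3xM,prop:TP4xM,prop:TP4XM',prop:TP8xM}, neither factor is now typical, so the tensor product need not be projective and matching characters against \cref{fig:LoewyP} no longer determines the module. First I would record $\ch{\irr{4}{\lambda} \otimes \irr{4}{\mu}}$ (respectively $\ch{\irr{3}{\lambda} \otimes \irr{3}{\mu}}$) as the product of the irreducible characters listed in \cref{irreduciblecharacter}, and extract the composition factors by repeatedly subtracting irreducible characters. This subtraction is pinned down using the self-duality of the product: each factor is a self-dual irreducible and, as $\qgrpcat$ is braided and pivotal, the duality functor is compatible with $\otimes$, so $\dual{(\irr{4}{\lambda} \otimes \irr{4}{\mu})} \cong \irr{4}{\lambda} \otimes \irr{4}{\mu}$. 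By \eqref{eq:socq=radq} this forces the Loewy diagram to be symmetric under top--bottom reflection, which already excludes most candidate structures and, via \cref{cor:irredext}, restricts the possible arrows.

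Next I would determine the socle, and hence (by self-duality) the head, by brute force. Using the coproduct \eqref{eq:Uisl3hopf}, I would write the action of $X_{\pm1}, X_{\pm2}, H_1, H_2$ on an explicit weight basis of the product assembled from weight bases of the two factors, then locate the maximal vectors (the simultaneous kernel of $X_1$ and $X_2$) and analyse the submodules they generate. The weights and multiplicities of the irreducible submodules fix $\soc T$, and the projective cover of $T$ is then the direct sum of the covers (\cref{thm:LoewyP}) of the head constituents. For Part 2, for instance, this procedure yields head and socle $\irr{3}{\lambda+\mu-\alpha_j}$ together with the middle-layer singlets $\irr{1}{\lambda+\mu}$, $\irr{1}{\lambda+\mu-2\alpha_j}$, $\irr{1}{\lambda+\mu-2\alpha_3}$, exhibiting $\irr{3}{\lambda}\otimes\irr{3}{\mu}$ as a $9$-dimensional quotient $\quo{9}{\lambda+\mu-\alpha_j}$ of $\pro{24}{\lambda+\mu-\alpha_j}$ with a simple head.

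With the composition factors and the head/socle in hand, I would identify $T$ as the asserted quotient by matching its radical and socle layers against those of the relevant $\pro{24}{}$ and $\pro{48}{}$ in \cref{fig:LoewyP} and reading off the surviving arrows from the explicit matrices, obtaining the diagrams in \cref{fig:LoewyKRSN}. In the mixed case $R^{(8)}_{\lambda+\mu-\alpha_j-\alpha_3} \oplus \pro{8}{\lambda+\mu-\alpha_j}$ the summand $\pro{8}{\lambda+\mu-\alpha_j}$ is the typical irreducible $\irr{8}{\lambda+\mu-\alpha_j}$, which splits off automatically since it is projective and (being self-dual) injective; the remaining indecomposable $R^{(8)}$ is then treated as above.

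The main obstacle is establishing indecomposability in the cases where the head has two constituents, namely $K^{(16)}_{\lambda+\mu-\alpha_j}$ and $S^{(16)}_{\lambda+\mu}$, each a quotient of a sum of two copies of $\pro{24}{}$. Here $T$ is neither cyclic nor cocyclic --- by self-duality the socle also splits into two factors --- so indecomposability cannot be read off from a simple head or simple socle. Instead I would argue directly from the explicit action that the two highest-weight generators are linked, in the sense that suitable lowering operators carry both of them into a common indecomposable subquotient, so that every idempotent in $\End(T)$ is forced to be $0$ or $\Id$. The few remaining a priori admissible arrow configurations would be eliminated by a non-existence argument in the spirit of \cref{Loewylemma}, thereby fixing the unique Loewy diagram asserted for $K^{(16)}$, $R^{(8)}$, $S^{(16)}$ and $\quo{9}{}$.
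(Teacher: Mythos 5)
Your plan is essentially the paper's own argument: the authors likewise proceed by brute-force computation with the explicit coproduct \eqref{eq:Uisl3hopf}, locating maximal vectors, pinning down the composition factors by weight and dimension counts, and realising the result as a quotient of the projective cover(s) of its head (they write out only the $\irr{3}{\lambda} \otimes \irr{3}{\mu}$ case, declaring the remaining products ``similar though slightly more intricate''). Your supplementary use of the self-duality of the tensor product to force the top--bottom symmetry of the Loewy diagrams, and your explicit attention to indecomposability in the two cases with non-simple head ($K^{(16)}$ and $S^{(16)}$), are sensible additions that the paper leaves implicit.
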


\begin{figure}
	\centering
	\begin{tikzpicture}[->, thick, scale=0.72]
		\node (12) at (0,4) [] {$\irr{3}{\lambda}$};
		\node (13) at (3,4) [] {$\irr{3}{\lambda-\alpha_3}$};
		\node (21) at (-3,2) [] {$\irr{1}{\lambda+\alpha_i}$};
		\node (22) at (0,2) [] {$\irr{1}{\lambda-\alpha_i}$};
		\node (23) at (3,2) [] {$\irr{1}{\lambda+\alpha_i-2\alpha_3}$};
		\node (24) at (6,2) [] {$\irr{1}{\lambda-\alpha_i-2\alpha_3}$};
		\node (33) at (3,0) [] {$\irr{3}{\lambda-\alpha_3}$};
		\node (32) at (0,0) [] {$\irr{3}{\lambda}$};
		\draw (12) -- (22);
		\draw (12) -- (23);
		\draw (12) -- (21);
		\draw (13) -- (22);
		\draw (13) -- (23);
		\draw (13) -- (24);
		\draw (21) -- (32);
		\draw (22) -- (32);
		\draw (23) -- (32);
		\draw (24) -- (33);
		\draw (22) -- (33);
		\draw (23) -- (33);
		\node[nom] at (-4,4) {$K^{(16)}_{\lambda}$};
		\begin{scope}[shift={(13,0)}]
			\node (1) at (0,4) [] {$\irr{1}{\lambda}$};
			\node (21) at (-2,2) [] {$\irr{3}{\lambda+\alpha_j+\alpha_3}$};
			\node (22) at (2,2) [] {$\irr{3}{\lambda-\alpha_j}$};
			\node (3) at (0,0) [] {$\irr{1}{\lambda}$};
			\draw (1) -- (21);
			\draw (1) -- (22);
			\draw (1) -- (21);
			\draw (21) -- (3);
			\draw (22) -- (3);
			\node[nom] at (-3,4) {$R^{(8)}_{\lambda}$};
		\end{scope}
		\begin{scope}[shift={(1.5,-6)}]
			\node (11) at (-1.5,4) [] {$\irr{3}{\lambda-\alpha_1}$};
			\node (12) at (1.5,4) [] {$\irr{3}{\lambda-\alpha_2}$};
			\node (21) at (-4.5,2) [] {$\irr{1}{\lambda-2\alpha_1}$};
			\node (22) at (-1.5,2) [] {$\irr{1}{\lambda}$};
			\node (22') at (1.5,2) {$\irr{1}{\lambda-2\alpha_3}$};
			\node (23) at (4.5,2) [] {$\irr{1}{\lambda-2\alpha_2}$};
			\node (31) at (-1.5,0) [] {$\irr{3}{\lambda-\alpha_1}$};
			\node (32) at (1.5,0) [] {$\irr{3}{\lambda-\alpha_2}$};
			\draw (11) -- (21);
			\draw (11) -- (22);
			\draw (11) -- (22');
			\draw (12) -- (22');
			\draw (12) -- (22);
			\draw (12) -- (23);
			\draw (21) -- (31);
			\draw (22) -- (31);
			\draw (22) -- (32);
			\draw (22') -- (31);
			\draw (22') -- (32);
			\draw (23) -- (32);
			\node[nom] at (-5.5,4) {$S^{(15)}_{\lambda}$};
		\end{scope}
		%
		%\begin{scope}[shift={(1.5,-6)}]
			%\node (1) at (-1.5,4) [] {$\irr{3}{\lambda-\alpha_1}$};
			%\node (12) at (1.5,4) [] {$\irr{3}{\lambda-\alpha_2}$};
			%\node (21) at (-3,2) [] {$\irr{1}{\lambda-2\alpha_1}$};
			%\node (22) at (0,2) [] {$\irr{1}{\lambda}$};
			%\node (23) at (3,2) [] {$\irr{1}{\lambda-2\alpha_2}$};
			%\node (31) at (-1.5,0) [] {$\irr{3}{\lambda-\alpha_1}$};
			%\node (32) at (1.5,0) [] {$\irr{3}{\lambda-\alpha_2}$};
			%\draw (1) -- (21);
			%\draw (1) -- (22);
			%\draw (12) -- (22);
			%\draw (12) -- (23);
			%\draw (21) -- (31);
			%\draw (22) -- (31);
			%\draw (22) -- (32);
			%\draw (23) -- (32);
			%\node[nom] at (-6,4) {$S^{(15)}_{\lambda}$};
		%\end{scope}
		%
		\begin{scope}[shift={(13,-6)}]
			\node (1) at (0,4) [] {$\irr{3}{\lambda}$};
			\node (21) at (-3,2) [] {$\irr{1}{\lambda+\alpha_i}$};
			\node (22) at (0,2) [] {$\irr{1}{\lambda-\alpha_i}$};
			\node (23) at (3,2) [] {$\irr{1}{\lambda+\alpha_i-2\alpha_3}$};
			\node (3) at (0,0) [] {$\irr{3}{\lambda}$};
			\draw (1) -- (22);
			\draw (1) -- (23);
			\draw (1) -- (21);
			\draw (21) -- (3);
			\draw (22) -- (3);
			\draw (23) -- (3);
			\node[nom] at (-4,4) {$\quo{9}{\lambda}$};
		\end{scope}
	\end{tikzpicture}
\caption{The Loewy diagrams of the indecomposable $\qgrp$-modules appearing in \cref{prop:TP4x4}.} \label{fig:LoewyKRSN}
\end{figure}

\begin{proof}
	We outline the argument resulting in \eqref{eq:TP3x3'}, the other tensor products being similar though slightly more intricate.  So let $v \in \irr{3}{\lambda} \otimes \irr{3}{\mu}$ have weight $\lambda+\mu$.  Then, $v$ is maximal and $(\lambda+\mu)_1, (\lambda+\mu)_2 \in 2\ZZ+1$, hence there is a composition factor $\irr{1}{\lambda+\mu}$.  Using the explicit coproduct of \eqref{eq:Uisl3hopf} and the structure of the irreducibles worked out in the proof of \cref{irreduciblecharacter}, one computes that $X_{-1} v = 0$ but $X_{-2} v \ne 0$.  Indeed, $X_{-2} v$ is maximal and so there is a composition factor $\irr{3}{\lambda+\mu-\alpha_2}$.  	From the structure of this factor, it follows that $X_{-2} X_{-1} X_{-2} v \ne 0$.  However, $(X_{-1} X_{-2})^2 v = (X_{-2} X_{-1})^2 v = 0$ and so $v$ generates a $4$-dimensional submodule $V$ of $\irr{3}{\lambda} \otimes \irr{3}{\mu}$.

	Next, note that $\lambda+\mu-\alpha_2$ has multiplicity $2$ in $\irr{3}{\lambda} \otimes \irr{3}{\mu}$.  Explicit computation shows that $X_2$ does not act as $0$ on the corresponding weight space.  In other words, there exists $w$ of this weight with $X_2 w = v$.  Since $X_1 w = 0$ by weight considerations, $[w]$ is maximal in $(\irr{3}{\lambda} \otimes \irr{3}{\mu}) \big/ V$ and so gives a second composition factor $\irr{3}{\lambda+\mu-\alpha_2}$.  It follows that $X_{-1} [w], X_{-2} X_{-1} [w] \ne 0$.  However, explicit calculation shows that $X_{-2} [w]$ and $X_{-1} X_{-2} X_{-1} [w]$ are also non-zero, indicating that $[w]$ does not generate an irreducible submodule of $(\irr{3}{\lambda} \otimes \irr{3}{\mu}) \big/ V$.  Indeed, these calculations imply that $(\irr{3}{\lambda} \otimes \irr{3}{\mu}) \big/ V$, and hence $\irr{3}{\lambda} \otimes \irr{3}{\mu}$, has $\irr{1}{\lambda+\mu-2\alpha_2}$ and $\irr{1}{\lambda+\mu-2\alpha_3}$ as composition factors.

	By dimensional considerations, these complete the set of composition factors of $\irr{3}{\lambda} \otimes \irr{3}{\mu}$.  To complete the description of the structure of this module, one checks that $X_{-1} X_{-2} w$ and $X_1 X_{-1} X_{-2} X_{-1} w$ are both non-zero multiples of $X_{-2} X_{-1} X_{-2} v$.  With this, it is easy to compute the radical and socle series of $\irr{3}{\lambda} \otimes \irr{3}{\mu}$ (which again coincide) and draw the Loewy diagram.  Finally, the fact that $\irr{3}{\lambda} \otimes \irr{3}{\mu}$ may be realised as a quotient of $\pro{24}{\lambda+\mu-\alpha_2}$ follows from the latter's role as the projective cover of the head of the former.
\end{proof}

\section{Categories related to $\affvoa$} \label{sec:KL}

In this \lcnamecref{sec:KL}, a Kazhdan--Lusztig correspondence is conjectured between the category of finite-dimensional weight $\qgrp$-modules and a module category of a vertex operator algebra.  More precisely, the conjecture is that this correspondence takes the form of an equivalence of braided tensor categories.  The vertex operator algebra appearing in this correspondence is not $\affvoa$, but its parafermionic coset.  This is of course the commutant of the rank-$2$ Heisenberg subalgebra of $\affvoa$ generated by the fields associated with the Cartan subalgebra of $\slthree$.

The reason why one might expect a Kazhdan--Lusztig correspondence involving this parafermionic vertex operator algebra is that it has recently been shown \cite{A,AdaPar20} to be isomorphic to $\singvoa$, a rank-$2$ analogue of the singlet algebras $\singlet$, $p \in \ZZ_{\ge2}$ \cite{Kausch:1990vg}.  This isomorphism extends the known coincidences between the $L_k(\sltwo)$-parafermions, with $k=-\frac{1}{2}$ and $-\frac{4}{3}$, and the $\singlet$, with $p=2,3$ respectively \cite{Ada-sl2,RidFus10} (as well as between $k=-\frac{2}{3}$ and the bosonic orbifold of an $N=1$ ``supersinglet'' algebra \cite{A,AugMod17}).

$\singvoa$ is one of a family, parametrised by $p \in \ZZ_{\ge2}$ and an ADE Dynkin diagram, of vertex algebras introduced by Feigin and Tipunin \cite{FT}, see also \cite{CM2}.  Aside from the $A_1$ case (the singlet algebras), the representation theories of these vertex operators algebras have not yet received much attention.  However, that of the singlet algebras is very well understood, see \cite{AM-singlet,AM-log,TsuExt13,RidMod13,CMY-singlet}.  In this case, a conjectural braided tensor equivalence has been proposed \cite{CGP,CM} and studied \cite{CMR,CGR,GN,CLR}.  It connects a category of modules over $\singlet$ with the category of finite-dimensional weight modules of the unrolled restricted quantum group $\UHbar{\mfsl{2}}$ with $q=\ee^{\pi \ii / p}$.

It is thus natural to conjecture \cite{CR} that this equivalence generalises to higher ranks and, in particular, to a braided tensor equivalence between the category of finite-dimensional weight $\qgrp$-modules and an appropriate category of $\singvoa$-modules.  Here, this conjecture is made explicit by comparing with the representation theory of the simple affine vertex operator algebra $\affvoa$.  Consequences for other related vertex operator algebra categories are also explored.

\subsection{Representation theory of $\affvoa$} \label{sec:KLaff}

The simple affine vertex operator algebra $\affvoa$ has central charge $-8$.  A list of its irreducible modules includes the following:
\begin{itemize}
	\item Four highest-weight modules $\airr{0}$, $\airr{-\frac{3}{2}\omega_1}$, $\airr{-\frac{3}{2}\omega_2}$ and $\airr{-\frac{1}{2}\rho}$.  These are highest-weight with respect to the standard Borel of $\aslthree$; the subscript indicates the projection of the highest weight onto $\csub$.  Their ground states have conformal weight $0$, $-\frac{1}{2}$, $-\frac{1}{2}$ and $-\frac{1}{2}$, respectively.
	\item A single family of semirelaxed highest-weight modules $\asemi{\mu}$, for all $[\mu] \in (-\frac{3}{2} \omega_1 + \CC \alpha_1) / \ZZ \alpha_1$ except for $[\mu] = [-\frac{3}{2} \omega_1]$ and $[\mu] = [-\frac{1}{2} \rho]$.  The root vectors of $\slthree$ corresponding to $\pm \alpha_1$, $-\alpha_2$ and $-\alpha_3$ act injectively while those for $\alpha_2$ and $\alpha_3$ act locally nilpotently.  The $\slthree$-weights of the ground states constitute the set $\mu + \ZZ \alpha_1 - \NN \alpha_2 - \NN \alpha_3$; these always have conformal weight $-\frac{1}{2}$.
	\item A single family of relaxed highest-weight modules $\arel{\mu}$, for all $[\mu] \in \csub^* / Q$ except
	\begin{equation} \label{eq:Rtyp}
		[\mu] \in [-\tfrac{3}{2} \omega_1 + \CC \alpha_1] \cup [-\tfrac{1}{2} \rho + \CC \alpha_2] \cup [-\tfrac{3}{2} \omega_1 + \CC \alpha_3].
	\end{equation}
	All root vectors of $\slthree$ act injectively and the set of $\slthree$-weights of the ground states is $\mu + Q$; these also always have conformal weight $-\frac{1}{2}$.
\end{itemize}
The highest-weight modules were originally classified in \cite{PerVer07}, while the relaxed highest-weight modules were constructed, but not proved to be exhaustive, in \cite{A}.  A complete classification was first obtained in \cite{AraWei16} using Gelfand-Tsetlin methods, see also \cite{KR1} for an alternative classification based on Mathieu's coherent families \cite{MatCla00}.

A complete set of irreducible positive-energy weight $\affvoa$-modules, with finite-dimensional weight spaces, is obtained from the irreducibles listed above by twisting by the automorphisms of $\slthree$ that preserve the chosen Cartan subalgebra.  These automorphisms form a group isomorphic to the dihedral group $D_6$.  Similarly, a complete set of irreducible weight $\affvoa$-modules, with finite-dimensional weight spaces, is obtained by also twisting by the spectral flow automorphisms $\sfaut{\omega}$, $\omega \in P$, of $\aslthree$.  Recall that the latter are defined by
\begin{equation}
	\sfmod{\omega}{e^\alpha_n} = e^\alpha_{n - \killing{\alpha}{\omega}}, \quad
	\sfmod{\omega}{h_n} = h_n - \killing{\omega}{h} \delta_{n,0} K, \quad
	\sfmod{\omega}{K} = K.
\end{equation}
Occasionally, a given $D_6$- or spectral flow twist will relate two of the irreducibles listed above, for example $\sfmod{\omega_i}{\airr{0}} \cong \airr{-3\omega_i/2}$ for $i=1,2$.  However, most twists result in new irreducibles.  A complete list of the exceptions may be found in \cite[Equation~(5.9)]{KRW}.

Denote the category of irreducible positive-energy weight $\affvoa$-modules, with finite-dimensional weight spaces and \emph{real} weights, by $\affcat$.  The list \cite[Equation~(5.9)]{KRW} of exceptions shows that the irreducibles in $\affcat$ may all be realised as spectral flows of the following irreducibles:
\begin{equation} \label{eq:irruptotwists}
	\airr{0}, \quad \airr{-\frac{1}{2}\rho}, \quad \conjmod{\airr{-\frac{1}{2}\rho}}, \quad \asemi{\mu}, \quad \weylmod{2}{\asemi{\mu}}, \quad \weylaut{1} \weylmod{2}{\asemi{\mu}}, \quad \arel{\mu}.
\end{equation}
Here, $\weylaut{i} \in D_6$, $i=1,2,3$, is the Weyl reflection of $\slthree$ corresponding to $\alpha_i$ and $\conjaut \in D_6$ is the conjugation automorphism (which acts on the Cartan subalgebra of $\slthree$ as multiplication by $-1$).

The relaxed family $\arel{\mu}$ also includes reducible $\affvoa$-modules.  Writing $\omega_3 = \omega_2 - \omega_1$, their composition factors were determined in \cite[Equations~(5.11--12)]{KRW} and are as follows:
\begin{subequations} \label{eq:Rdegens}
	\begin{gather}
		\begin{aligned}
			\mu &\in -\tfrac{3}{2} \omega_1 + \CC \alpha_1, & [\mu] &\ne [-\tfrac{3}{2} \omega_1], [-\tfrac{1}{2} \rho]:
			& &\asemi{\mu},\ \sfmod{-\omega_2}{\asemi{\mu-\frac{1}{2}\alpha_1}}, \\
			\mu &\in -\tfrac{1}{2} \rho + \CC \alpha_2, & [\mu] &\ne [-\tfrac{1}{2} \rho], [-\tfrac{3}{2} \omega_2]:
			& &\weylaut{1} \weylmod{2}{\asemi{\weylaut{2} \cdot \weylmod{1}{\mu}}},\ \sfaut{\omega_1} \weylaut{1} \weylmod{2}{\asemi{\weylaut{2} \weylaut{1} \cdot \mu + \frac{3}{2}\omega_2}}, \\
			\mu &\in -\tfrac{3}{2} \omega_1 + \CC \alpha_3, & [\mu] &\ne [-\tfrac{3}{2} \omega_1], [-\tfrac{3}{2} \omega_2]:
			& &\weylmod{2}{\asemi{\weylmod{2}{\mu}}},\ \sfaut{\omega_3} \weylmod{2}{\asemi{\weylaut{2} \cdot \mu + \frac{3}{2}\omega_2}},
		\end{aligned}
		\label{eq:Rdegen1} \\
		\begin{aligned}
			\mu &= [-\tfrac{3}{2} \omega_1]:
			& &\sfmod{\omega_1}{\airr{0}},\ \sfmod{-\omega_1}{\airr{0}},\ \sfaut{\omega_2} \conjmod{\airr{-\frac{1}{2}\rho}},\ \sfmod{-\omega_2}{\airr{-\frac{1}{2}\rho}}, \\
			\mu &= [-\tfrac{1}{2} \rho]:
			& &\airr{-\frac{1}{2}\rho},\ \conjmod{\airr{-\frac{1}{2}\rho}},\ \sfmod{\omega_3}{\airr{0}},\ \sfmod{-\omega_3}{\airr{0}}, \\
			\mu &= [-\tfrac{3}{2} \omega_2]:
			& &\sfmod{\omega_2}{\airr{0}},\ \sfmod{-\omega_2}{\airr{0}},\ \sfaut{\omega_1} \conjmod{\airr{-\frac{1}{2}\rho}},\ \sfmod{-\omega_1}{\airr{-\frac{1}{2}\rho}}.
		\end{aligned}
		\label{eq:Rdegen2}
	\end{gather}
\end{subequations}
Here, every composition factor has been expressed as a spectral flow of an irreducible in \eqref{eq:irruptotwists}.

Grothendieck fusion rules for all irreducibles in $\affcat$ were computed in \cite{KRW} using a conjectural Verlinde formula \cite{CreLog13,RidVer14}.  As Grothendieck fusion respects $D_6$-twists and spectral flow,
\begin{equation} \label{eq:respect}
	\begin{aligned}
		\weylmod{}{M} \Grfuse \weylmod{}{N} &= \weylmod{}{M \Grfuse N}, &&& \weylaut{} &\in D_6, \\
		\sfmod{\omega}{M} \Grfuse \sfmod{\omega'}{N} &= \sfmod{\omega+\omega'}{M \Grfuse N}, &&& \omega, \omega' &\in P,
	\end{aligned}
\end{equation}
it suffices to list the following rules \cite[Equations~(5.54, 56, 58, 61, 62, 64, 65 and 68)]{KRW}:
\begin{subequations} \label{eq:grfusionrules}
	\begin{align}
		\airr{-\frac{1}{2}\rho} \Grfuse \airr{-\frac{1}{2}\rho} &= \airr{0} + \sfmod{2\omega_1}{\airr{0}} + \sfmod{2\omega_2}{\airr{0}} + 2\, \sfaut{\rho} \conjmod{\airr{-\frac{1}{2}\rho}}, \label{eq:GFR33} \\
		\airr{-\frac{1}{2}\rho} \Grfuse \conjmod{\airr{-\frac{1}{2}\rho}} &= \airr{0} + \arel{0}, \label{eq:GFR33bar} \\
		\airr{-\frac{1}{2}\rho} \Grfuse \asemi{\mu} &= \sfmod{\omega_1}{\asemi{\mu+\frac{1}{2}\alpha_1}} + \sfmod{\omega_2}{\arel{\mu+\frac{1}{2}\alpha_2}}, \label{eq:GFR34} \\
		\airr{-\frac{1}{2}\rho} \Grfuse \arel{\mu} &= \arel{\mu-\frac{1}{2}\rho} + \sfmod{\omega_1}{\arel{\mu+\frac{1}{2}\alpha_1}} + \sfmod{\omega_2}{\arel{\mu+\frac{1}{2}\alpha_2}}, \label{eq:GFR38} \\
		\asemi{\lambda} \Grfuse \asemi{\mu} &= \sfmod{\omega_1}{\asemi{\lambda+\mu+\frac{3}{2}\omega_1}} + \sfmod{\omega_2}{\arel{\lambda+\mu+\frac{3}{2}\omega_2}} + \sfmod{\omega_3}{\asemi{\lambda+\mu+\frac{3}{2}\omega_3}}, \label{eq:GFR44} \\
		\asemi{\lambda} \Grfuse \weylmod{2}{\asemi{\mu}} &= \arel{\lambda+\weylmod{2}{\mu}} + \sfmod{\omega_1}{\arel{\lambda+\weylmod{2}{\mu}+\frac{3}{2}\omega_1}}, \label{eq:GFR4w4} \\
		\asemi{\lambda} \Grfuse \weylaut{1} \weylmod{2}{\asemi{\mu}} &= \arel{\lambda+\weylaut{1}\weylmod{2}{\mu}} + \sfmod{\omega_3}{\arel{\lambda+\weylaut{1}\weylmod{2}{\mu}+\frac{3}{2}\omega_3}}, \label{eq:GFR4ww4} \\
		\asemi{\lambda} \Grfuse \arel{\mu} &= \arel{\lambda+\mu} + \textstyle\sum_{i=1}^3 \sfmod{\omega_i}{\arel{\lambda+\mu+\frac{3}{2}\omega_i}}, \label{eq:GFR48} \\
		\arel{\lambda} \Grfuse \arel{\mu} &= 2 \, \arel{\lambda+\mu} + \textstyle\sum_{i=1}^3 \left( \sfmod{\omega_i}{\arel{\lambda+\mu+\frac{3}{2}\omega_i}} + \sfmod{-\omega_i}{\arel{\lambda+\mu+\frac{3}{2}\omega_i}} \right). \label{eq:GFR88}
	\end{align}
\end{subequations}

The restriction made above, that modules in $\affcat$ have real weights, is purely technical in nature.  However, it is worth noting that the decompositions \eqref{eq:grfusionrules} were also deduced in \cite{KRW} under the assumption that all weights are real.

\subsection{A parafermionic coset} \label{sec:KLcoset}

The vertex operator algebra $\affvoa$ has a rank-$2$ Heisenberg subalgebra $H$ generated by the fields associated with the Cartan subalgebra $\csub$.  As noted above, the commutant of this subalgebra in $\affvoa$ is known \cite{A,AdaPar20} to be isomorphic to the Feigin--Tipunin algebra $\singvoa$.  The consequent conformal embedding $\singvoa \otimes H \into \affvoa$ then leads to decompositions of $\affvoa$-modules as $\singvoa \otimes H$-modules.

To analyse these decompositions, the following \lcnamecref{conj:vtc} is useful.  It is understood to be in force for all that follows.
\begin{conjecture} \label{conj:vtc}
	The category $\affcat$ of weight $\affvoa$-modules with finite-dimensional weight spaces and real weights admits the structure of a vertex tensor category.  Furthermore, the tensor (fusion) product $\singfuse$ of $\affcat$ descends to the Grothendieck group and the resulting Grothendieck fusion rules are as in \eqref{eq:respect} and \eqref{eq:grfusionrules}.
\end{conjecture}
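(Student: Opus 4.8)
The plan is to reduce both claims to the representation theory of the coset $\singvoa$ via the conformal extension $\singvoa \otimes H \into \affvoa$, rather than to confront $\affvoa$ head-on. For the vertex tensor category structure, I would first isolate a category of $\singvoa$-modules that is closed under fusion and admits a Huang--Lepowsky--Zhang (HLZ) tensor product; the rank-$2$ Heisenberg vertex algebra $H$ contributes its standard braided tensor category of Fock modules, and the Deligne product of the two then supplies a vertex tensor category of $\singvoa \otimes H$-modules. The key structural input is that $\affvoa$ is a commutative (haploid) algebra object in this category, in fact a conformal extension expected to be of simple-current type, so that the theory of vertex algebra extensions identifies $\affcat$ with the category of local modules over this algebra and equips it with an induced vertex tensor structure. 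To make this effective one must first compute the branching rules of the irreducibles \eqref{eq:irruptotwists} into $\singvoa \otimes H$-modules and verify that the smallest category containing them and closed under fusion, $D_6$-twists and spectral flow is stable under the induced tensor product.

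For the Grothendieck fusion rules I would use that induction is exact and monoidal, so that it transports fusion on the $\singvoa \otimes H$ side to fusion in $\affcat$. Granting the conjectural braided tensor equivalence between $\singvoa$-modules and $\qgrp$-modules (\cref{conj:bte}), the fusion of $\singvoa$-modules is governed by the explicit tensor product decompositions of \cref{tensorsec}, namely \cref{prop:TP1xL,prop:TP3xM,prop:TP4xM,prop:TP4XM',prop:TP8xM,prop:TP34x34,prop:TP4x4}, while tensoring with Fock modules merely shifts the Heisenberg charge. The equivariance relations \eqref{eq:respect} under the $D_6$-symmetry and the spectral flow automorphisms $\sfmod{\omega}{\blank}$, which follow from the compatibility of these automorphisms with fusion, reduce the whole computation to the finite list \eqref{eq:grfusionrules}. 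One then checks line by line that inducing the relevant $\qgrp$-level decompositions reproduces each relation in \eqref{eq:grfusionrules}, with the degeneracies \eqref{eq:Rdegens} accounting for the reducible relaxed modules $\arel{\mu}$.

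The hard part is the first step: establishing, rather than assuming, a genuine vertex tensor category structure on a sufficiently large category of $\singvoa$-modules. For Feigin--Tipunin algebras of rank greater than one essentially nothing is known --- even $C_2$-cofiniteness of the octuplet algebra $\octvoa$ is only anticipated --- so the HLZ machinery cannot yet be invoked off the shelf, and it is exactly the convergence and associativity of intertwining operators among the non-ordinary (relaxed and semirelaxed) modules where the analytic difficulty concentrates. For this reason I regard the statement as the formulation of a program rather than a theorem in reach: the abelian data and fusion on the $\qgrp$ side are fully under control and the extension-theoretic bridge is robust, but the foundational input on the coset side is missing. A complementary route to the fusion rules alone would be to prove rigidity for $\affcat$ and apply a logarithmic Verlinde formula directly; since, however, the right-hand sides of \eqref{eq:grfusionrules} were themselves obtained in \cite{KRW} from precisely such a (still conjectural) formula, this would establish internal consistency rather than an unconditional result.
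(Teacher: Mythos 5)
This statement is a \emph{conjecture}: the paper offers no proof of it, only supporting evidence, so there is no argument of the authors' to compare yours against step by step. Your proposal is essentially a faithful articulation of the programme the paper itself assembles --- the conformal embedding $\singvoa \otimes H \into \affvoa$, the identification of $\affcat$ with local modules over a simple-current algebra object in a direct limit completion of the Deligne product of $\singcat$ and $\fockcat$, the exact monoidal induction functor of \cref{sec:KLrecon}, and the consistency check against the Verlinde-formula computations of \cite{KRW} --- and you correctly diagnose both the missing foundational input (an HLZ vertex tensor structure on a large enough category of modules for a higher-rank Feigin--Tipunin algebra) and the fact that agreement with \eqref{eq:grfusionrules} is internal consistency rather than independent confirmation, since those rules were themselves produced by a conjectural Verlinde formula. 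The one logical point worth flagging is the ordering: in the paper, \cref{conj:vtc} is assumed \emph{before} \cref{conj:bte} can even be formulated, because the decompositions \eqref{eq:Rdecomp}, \eqref{eq:Sdecomp} and \eqref{eq:Ldecomp} defining the objects of $\singcat$ rely on it; so your second paragraph, which runs the tensor products of \cref{tensorsec} through \cref{conj:bte} and induction to recover \eqref{eq:grfusionrules}, is the consistency test carried out in \cref{sec:KLfusion}, not a route to proving the fusion-rule half of \cref{conj:vtc} without circularity. With that caveat, your assessment that the statement is the formulation of a programme rather than a theorem in reach matches the paper's own stance.
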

Assuming this \lcnamecref{conj:vtc}, the results of \cite{CKLR} for general Heisenberg commutants imply that the submodule structure of a given module in $\affcat$ is shared by each of the $\singvoa$-modules appearing in its decomposition.  Moreover, they also imply that $\affvoa$ decomposes into simple currents:
\begin{equation} \label{eq:vacdecomp}
	\affvoa \cong \bigoplus_{\lambda \in Q} \singirr{1}{\lambda} \otimes \fock{\lambda}, \qquad
	\singirr{1}{\lambda} \singfuse \singirr{1}{\mu} \cong \singirr{1}{\lambda+\mu}.
\end{equation}
Here, $\fock{\lambda}$ denotes the Fock module of $H$ of highest weight $\lambda$ and $\singirr{1}{\lambda}$, $\lambda \in Q$, is an irreducible $\singvoa$-module.

The natural $\affvoa$-modules with which to start this decomposition game are the relaxed highest-weight $\affvoa$-modules $\arel{\mu} \in \affcat$, because these (along with their spectral flows) are the \emph{standard modules} in the sense of \cite{CreLog13,RidVer14}.  In particular, every other irreducible weight module appears as a composition factor of a standard one, see \eqref{eq:Rdegens}.  As the $\slthree$-weights of $\arel{\mu}$ are precisely the elements of $[\mu] = \mu+Q$, it decomposes into modules over $\singvoa \otimes H$ as follows:
\begin{equation} \label{eq:Rdecomp}
	\arel{\mu} \cong \:\bigoplus_{\mathclap{\lambda \in \mu+Q}}\: \singirr{8}{\lambda} \otimes \fock{\lambda}.
\end{equation}
Here, the $\singirr{8}{\lambda}$ are $\singvoa$-modules satisfying $\singirr{8}{\lambda} \singfuse \singirr{1}{\mu} \cong \singirr{8}{\lambda+\mu}$ \cite{CKLR}.

The character of $\singirr{8}{\lambda}$ is now easily deduced from \cite[Equation~(5.14)]{KRW}:
\begin{equation} \label{eq:cosetRchar}
	\ch{\arel{\mu}} = \frac{1}{\eta(q)^4} \:\sum_{\mathclap{\lambda \in \mu+Q}}\: z^{\lambda}
	= \:\sum_{\mathclap{\lambda \in \mu+Q}}\: \frac{q^{\frac{1}{3}\norm{\lambda}^2}}{\eta(q)^2} \frac{z^{\lambda} q^{-\frac{1}{3}\norm{\lambda}^2}}{\eta(q)^2}
	\quad \Rightarrow \quad
	\ch{\singirr{8}{\lambda}} = \frac{q^{\frac{1}{3}\norm{\lambda}^2}}{\eta(q)^2}.
\end{equation}
This suggests that $\singirr{8}{\lambda}$ may be constructed as another rank-$2$ Fock module, albeit one whose central charge is $-10$ (this free field approach to $\singvoa$ is that originally used in \cite{FT}).  Applying \cite[Equation~(5.4)]{KRW} now gives the character decomposition of a general standard module:
\begin{equation} \label{eq:cosetstchar}
	\ch{\sfmod{\omega}{\arel{\mu}}}
	= \:\sum_{\mathclap{\lambda \in \mu+Q}}\: \frac{q^{\frac{1}{3}\norm{\lambda}^2}}{\eta(q)^2} \frac{z^{\lambda-\frac{3}{2}\omega} q^{-\frac{1}{3}\norm{\lambda-\frac{3}{2}\omega}^2}}{\eta(q)^2}
	= \:\sum_{\mathclap{\lambda \in \mu+Q}}\: \ch{\singirr{8}{\lambda}} \ch{\fock{\lambda-\frac{3}{2}\omega}}.
\end{equation}
When $\arel{\mu}$ is irreducible (typical), this shows that
\begin{equation}
	\sfmod{\omega}{\arel{\mu}} \cong \:\bigoplus_{\mathclap{\lambda \in \mu+Q}}\: \singirr{8}{\lambda} \otimes \fock{\lambda-\frac{3}{2}\omega},
\end{equation}
hence that spectral flow only changes the Fock module labels.  The same must then be true for the atypical $\arel{\mu}$ as well.

\subsection{A logarithmic Kazhdan--Lusztig correspondence} \label{sec:KLcorr}

Recall that $\arel{\mu}$ is typical unless $[\mu] \in \csub^* / Q$ belongs to the set in \eqref{eq:Rtyp}.  The $\singirr{8}{\lambda}$ appearing in \eqref{eq:Rdecomp} are then irreducible (typical) unless
\begin{equation} \label{eq:atypcoset}
	\lambda \in \bigcup_{\substack{i,j=1 \\ i \ne j}}^3 \left( (\ZZ + \tfrac{1}{2}) \alpha_i + \RR \alpha_j \right)
\end{equation}
(recalling the restriction to real weights in the definition of $\affcat$).  These typicality conditions are illustrated in \cref{fig:atypsing}.  Crucially, the conditions for $\singirr{8}{\lambda}$ match those given in \cref{fig:atypwts} for the Verma modules $\verma{\lambda'}$ of $\qgrp$, if $\lambda$ and $\lambda'$ are related by the affine transformation $\lambda' = \varphi(\lambda) + \rho$ of $\csub^*$.  Here, $\varphi$ is the linear map satisfying
\begin{equation} \label{eq:affine}
	\varphi(\alpha_1) = 2 \omega_1, \quad \varphi(\alpha_2) = 2 \omega_3 \quad \text{and} \quad \varphi(\alpha_3) = 2 \omega_2.
\end{equation}
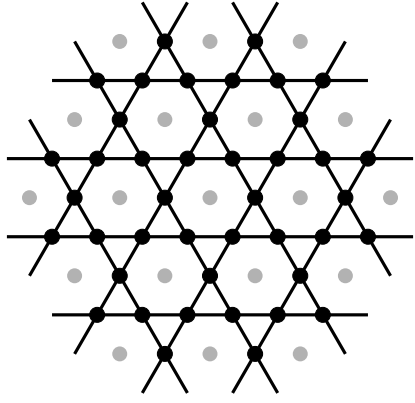
\begin{figure}
	\centering
	\begin{tikzpicture}[scale=1.2]
		% typical integral weights
		\foreach \n in {-2,-1,0,1,2} \node[wt] at (0:\n) {};
		\begin{scope}[shift={(60:1)}] \foreach \n in {-2,-1,0,1} \node[wt] at (0:\n) {}; \end{scope}
		\begin{scope}[shift={(60:2)}] \foreach \n in {-2,-1,0} \node[wt] at (0:\n) {}; \end{scope}
		\begin{scope}[shift={(-60:1)}] \foreach \n in {-2,-1,0,1} \node[wt] at (0:\n) {}; \end{scope}
		\begin{scope}[shift={(-60:2)}] \foreach \n in {-2,-1,0} \node[wt] at (0:\n) {}; \end{scope}
		% atypical lines
		\begin{scope}[shift={(60:0.5)}] \draw[very thick] (180:2.5) -- (0:2); \end{scope}
		\begin{scope}[shift={(60:1.5)}] \draw[very thick] (180:2.5) -- (0:1); \end{scope}
		\begin{scope}[shift={(-60:0.5)}] \draw[very thick] (180:2.5) -- (0:2); \end{scope}
		\begin{scope}[shift={(-60:1.5)}] \draw[very thick] (180:2.5) -- (0:1); \end{scope}
		\begin{scope}[shift={(60:0.5)}] \draw[very thick] (120:2) -- (-60:2.5); \end{scope}
		\begin{scope}[shift={(60:1.5)}] \draw[very thick] (120:1) -- (-60:2.5); \end{scope}
		\begin{scope}[shift={(-120:0.5)}] \draw[very thick] (120:2.5) -- (-60:2); \end{scope}
		\begin{scope}[shift={(-120:1.5)}] \draw[very thick] (120:2.5) -- (-60:1); \end{scope}
		\begin{scope}[shift={(120:0.5)}] \draw[very thick] (-120:2.5) -- (60:2); \end{scope}
		\begin{scope}[shift={(120:1.5)}] \draw[very thick] (-120:2.5) -- (60:1); \end{scope}
		\begin{scope}[shift={(-60:0.5)}] \draw[very thick] (-120:2) -- (60:2.5); \end{scope}
		\begin{scope}[shift={(-60:1.5)}] \draw[very thick] (-120:1) -- (60:2.5); \end{scope}
		% atypical integral weights
		\foreach \n in {-1.5,-0.5,...,1.5} \node[atyp] at (0:\n) {};
		\begin{scope}[shift={(60:0.5)}] \foreach \n in {-2,-1.5,...,1.5} \node[atyp] at (0:\n) {}; \end{scope}
		\begin{scope}[shift={(60:1)}] \foreach \n in {-1.5,-0.5,0.5} \node[atyp] at (0:\n) {}; \end{scope}
		\begin{scope}[shift={(60:1.5)}] \foreach \n in {-2,-1.5,...,0.5} \node[atyp] at (0:\n) {}; \end{scope}
		\begin{scope}[shift={(60:2)}] \foreach \n in {-1.5,-0.5} \node[atyp] at (0:\n) {}; \end{scope}
		\begin{scope}[shift={(-60:0.5)}] \foreach \n in {-2,-1.5,...,1.5} \node[atyp] at (0:\n) {}; \end{scope}
		\begin{scope}[shift={(-60:1)}] \foreach \n in {-1.5,-0.5,0.5} \node[atyp] at (0:\n) {}; \end{scope}
		\begin{scope}[shift={(-60:1.5)}] \foreach \n in {-2,-1.5,...,0.5} \node[atyp] at (0:\n) {}; \end{scope}
		\begin{scope}[shift={(-60:2)}] \foreach \n in {-1.5,-0.5} \node[atyp] at (0:\n) {}; \end{scope}
	\end{tikzpicture}
\caption{The atypical weights $\lambda \in \csub^*$ of the $\singvoa$-modules $\singirr{8}{\lambda}$, pictured with $0$ at the centre.  Grey dots indicate typical weights in $Q$, black lines indicate atypical weights of degree $1$ and black dots are atypical weights of degree $2$.  The highest root $\alpha_3$ is the grey dot closest to $0$ along the ray $30^{\circ}$ to the right of the upwards vertical.} \label{fig:atypsing}
\end{figure}% DR this "%" prevents a space before the next sentence!
This suggests the following (partial) Kazhdan--Lusztig correspondence.
\begin{conjecture} \label{conj:btetyp}
	The $\singvoa$-modules obtained by decomposing the $\affvoa$-modules in $\affcat$ form a vertex tensor category $\singcat$.  The identification of $\singirr{8}{\lambda}$ with $\verma{\varphi(\lambda)+\rho}$ partially defines a braided tensor equivalence between $\singcat$ and the full subcategory $\qgrpcat_{\RR}$ of $\qgrpcat$ containing the finite-dimensional weight $\qgrp$-modules with \emph{real} weights.
\end{conjecture}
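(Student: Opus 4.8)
The plan is to construct the claimed equivalence $F \colon \singcat \to \qgrpcat_{\RR}$ in stages, taking the complete description of $\qgrpcat$ obtained in \cref{qgrpmod} as the target template and transporting structure from $\affcat$ across the coset. First I would establish that $\singcat$ is genuinely a vertex tensor category: granting \cref{conj:vtc} for $\affcat$, the conformal embedding $\singvoa \otimes H \into \affvoa$ together with the general Heisenberg-commutant results of \cite{CKLR} should endow the category of $\singvoa$-modules arising in \eqref{eq:Rdecomp} with a braided tensor structure, since decoupling the Fock factors is compatible with the fusion product. On objects the functor is anchored by the identification $F(\singirr{8}{\lambda}) = \verma{\varphi(\lambda)+\rho}$ of \cref{conj:btetyp}; the crucial point is that the affine map $\lambda \mapsto \varphi(\lambda)+\rho$ of \eqref{eq:affine} carries the atypicality loci of \cref{fig:atypsing} exactly onto those of \cref{fig:atypwts}, so typical standards are sent to typical (hence irreducible, by \cref{irredM}) Vermas and the four atypicality classes are matched.

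Next I would upgrade $F$ to an equivalence of abelian categories. Both sides have simple objects indexed by weights of matching atypicality type, so I would first match simples by highest weight (equivalently by character, via \eqref{eq:cosetRchar} on the coset side). I would then match projectives: $\qgrpcat$ has enough projectives with BGG reciprocity (\cref{bgg}) and the explicit Loewy diagrams of \cref{thm:LoewyP}, while on the $\singvoa$ side the projective covers and their standard filtrations are read off from the degeneration data \eqref{eq:Rdegens} of the affine standard modules, together with the fact (from \cite{CKLR}) that the submodule structure is shared between an $\affvoa$-module and its $\singvoa$-constituents. Matching these invariants block by block --- one semisimple typical block, three degree-one families, and a single principal degree-two block --- would yield exactness, full faithfulness and essential surjectivity of $F$.

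To make $F$ monoidal I would compare the tensor-product decompositions of $\qgrpcat$ computed in \cref{tensorsec} with the fusion rules of $\singcat$. The latter arise from the affine Grothendieck fusion rules \eqref{eq:grfusionrules} by decoupling the Heisenberg Fock factors, precisely the descent the paper carries out; checking that the quantum-group decompositions (\cref{prop:TP3xM,prop:TP8xM,prop:TP4x4}) reproduce these rules would supply the natural monoidal isomorphisms on standards and, by exactness and the projectivity arguments of \cref{tensorsec}, on all objects. Finally I would verify that $F$ intertwines the braidings: the quantum-group braiding comes from the quasitriangular structure of \cite{GP,R}, whereas the vertex-algebra braiding is governed by the action of $L_0$ through the \cite{CKLR} monodromy, and one would match the resulting double braidings and ribbon twists on simples before extending by naturality.

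The hard part will be twofold. Establishing the vertex tensor category structure on $\singcat$ rigorously is genuinely difficult: singlet-type categories are not $C_1$-cofinite, so the Huang--Lepowsky--Zhang existence theorems do not apply directly and one must bootstrap everything from $\affvoa$ via the Heisenberg extension, which already presupposes \cref{conj:vtc}. The second obstacle is the braiding itself: matching Grothendieck fusion is only a shadow of the full monoidal structure, and controlling the actual (non-semisimple) tensor products of indecomposables together with their monodromy is delicate. It is exactly these two points that keep the statement conjectural; the present contribution is to verify the Grothendieck-level consistency, which is strong supporting evidence rather than a proof.
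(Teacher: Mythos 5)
This statement is a conjecture, and the paper offers no proof of it --- only the motivating observation that the affine map $\lambda \mapsto \varphi(\lambda)+\rho$ carries the atypicality loci of the coset modules onto those of the $\qgrp$ Verma modules, plus the later Grothendieck-level fusion checks against \cite{KRW}. Your proposal correctly reproduces exactly this evidence, outlines the same would-be strategy (decomposition via the Heisenberg commutant results of \cite{CKLR}, matching of simples, projectives and tensor decompositions block by block), and honestly identifies the two genuine obstructions --- constructing the vertex tensor structure on $\singcat$ without $C_1$-cofiniteness, and lifting Grothendieck-level agreement to a braided tensor equivalence --- which are precisely why the statement remains open; so your assessment is in full agreement with the paper's.
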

Of course, if $\lambda$ is typical for $\singvoa$, then $\varphi(\lambda)+\rho$ is typical for $\qgrp$, hence $\verma{\varphi(\lambda)+\rho}$ is irreducible and the identification in \cref{conj:btetyp} becomes
\begin{equation}
	\singirr{8}{\lambda} \corrto \irr{8}{\varphi(\lambda)+\rho}.
\end{equation}
The complete (conjectural) Kazhdan--Lusztig correspondence is obtained by extending this identification to the remaining irreducible $\qgrp$-modules in $\qgrpcat_{\RR}$.

For this, note that degree-$1$ atypicality in $\qgrpcat$ corresponds to the $\asemi{\mu}$ and their $D_6$-twists in $\affcat$.  Decomposing defines (generically irreducible) $\singvoa$-modules $\singirr{4}{\lambda}$:
\begin{subequations} \label{eq:Sdecomp}
	\begin{align}
		\asemi{\mu} &\cong \:\bigoplus_{\mathclap{\lambda \in \mu + Q}}\: \singirr{4}{\lambda} \otimes \fock{\lambda}, &
		\mu &\in -\tfrac{3}{2} \omega_1 + \RR \alpha_1.
	\intertext{This defines the $\singirr{4}{\lambda}$ when $\lambda \in -\frac{3}{2} \omega_1 + \RR \alpha_1 + Q$.  However, twisting by $\weylaut{2}$ and $\weylaut{1} \weylaut{2}$ extends this to $\lambda \in -\frac{3}{2} \omega_1 + \RR \alpha_3 + Q$ and $\lambda \in -\frac{1}{2} \rho + \RR \alpha_2 + Q$, respectively:}
		\weylmod{2}{\asemi{\weylmod{2}{\mu}}} &\cong \:\bigoplus_{\mathclap{\lambda \in \mu + Q}}\: \singirr{4}{\lambda} \otimes \fock{\lambda}, &
		\mu &\in -\tfrac{3}{2} \omega_1 + \RR \alpha_3, \\
		\weylaut{1} \weylmod{2}{\asemi{\weylaut{2} \weylmod{1}{\mu}}} &\cong \:\bigoplus_{\mathclap{\lambda \in \mu + Q}}\: \singirr{4}{\lambda} \otimes \fock{\lambda}, &
		\mu &\in -\tfrac{1}{2} \rho + \RR \alpha_2.
	\end{align}
\end{subequations}
For degree-$2$ atypicality, the decompositions are instead
\begin{equation} \label{eq:Ldecomp}
	\airr{-\frac{1}{2} \rho} \cong \:\bigoplus_{\mathclap{\lambda \in -\frac{1}{2} \rho + Q}}\: \singirr{3}{\lambda} \otimes \fock{\lambda}, \qquad
	\conjmod{\airr{-\frac{1}{2} \rho}} \cong \:\bigoplus_{\mathclap{\lambda \in -\frac{1}{2} \rho + Q}}\: \singirr{\overline{3}}{\lambda} \otimes \fock{\lambda}.
\end{equation}
Recall also the decomposition \eqref{eq:vacdecomp} defining the $\singirr{1}{\lambda}$, $\lambda \in Q$.  In all cases, \cite{CKLR} gives
\begin{equation}
	\singirr{1}{\lambda} \singfuse \singirr{d}{\mu} \cong \singirr{d}{\lambda+\mu}, \quad d=1,3,4,8.
\end{equation}

The full Kazhdan--Lusztig correspondence is as follows.
\begin{conjecture} \label{conj:bte}
	There is a braided tensor equivalence between $\singcat$ and $\qgrpcat_{\RR}$ defined by
	\begin{subequations} \label{eq:KLequiv}
		\begin{gather}
			\singirr{8}{\lambda} \corrto \irr{8}{\varphi(\lambda)+\rho} \qquad \text{for generic $\lambda$}, \\
			\singirr{4}{\lambda} \corrto
			\begin{cases*}
				\irr{4}{\varphi(\lambda)+\rho} & for generic $\lambda$ in $-\tfrac{3}{2} \omega_1 + \RR \alpha_1 + Q$ or $-\frac{3}{2} \omega_1 + \RR \alpha_3 + Q$, \\
				\irr{4}{\varphi(\lambda)} & for generic $\lambda$ in $-\tfrac{1}{2} \rho + \RR \alpha_2 + Q$,
			\end{cases*}
			\\
			\singirr{3}{\lambda} \corrto \irr{3}{\varphi(\lambda)+\rho}, \quad
			\singirr{\overline{3}}{\lambda} \corrto \irr{3}{\varphi(\lambda)} \qquad \text{for $\lambda \in -\frac{1}{2} \rho + Q$}, \\
			\singirr{1}{\lambda} \corrto \irr{1}{\varphi(\lambda)} \qquad \text{for $\lambda \in Q$}.
		\end{gather}
	\end{subequations}
\end{conjecture}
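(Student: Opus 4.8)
The plan is to establish \cref{conj:bte} in two stages: first as an equivalence of abelian categories, and then to promote it to a braided monoidal equivalence. The starting point is the correspondence on simple objects recorded in \eqref{eq:KLequiv}, which is essentially forced by matching typicality patterns: the atypicality loci of the $\singirr{8}{\lambda}$ in \cref{fig:atypsing} coincide with those of the Vermas $\verma{\varphi(\lambda)+\rho}$ in \cref{fig:atypwts} under the affine map $\lambda \mapsto \varphi(\lambda)+\rho$ of \eqref{eq:affine}, and \cref{irreduciblecharacter} shows that this typicality class determines the dimension and hence the isomorphism type of each irreducible $\qgrp$-module. The functor is defined on objects by \eqref{eq:KLequiv} and on morphisms through the decomposition functor implicit in \eqref{eq:Rdecomp}, \eqref{eq:Sdecomp} and \eqref{eq:Ldecomp}; it is manifestly exact and character-preserving.

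To establish the abelian equivalence I would exploit the key input, already invoked in \cref{sec:KLcoset}, that by \cite{CKLR} every $\singvoa$-module occurring in the decomposition of an $\affvoa$-module in $\affcat$ inherits the full submodule lattice of that $\affvoa$-module. Thus the $\Ext^1$-groups and Loewy data of $\singcat$ are read off from the corresponding data in $\affcat$, in particular from the composition series \eqref{eq:Rdegens}. I would then match this, block by block, against the complete classification of non-split extensions of $\qgrp$-irreducibles in \cref{cor:irredext} and the projective structures of \cref{thm:LoewyP}. Because \cref{projcharacter} shows that projectives in $\qgrpcat$ are distinguished by their characters, and our functor is exact and preserves characters, full faithfulness and essential surjectivity would follow once the simples, their first extensions, and their projective covers are matched; the block decomposition of $\qgrpcat$ recorded after \cref{cor:irredext} organises this comparison.

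The monoidal step has as its deepest prerequisite — and main obstacle — the existence of a braided vertex tensor structure on $\singcat$ at all, which is not supplied by general theory since $\singvoa$ is not known to be $C_2$-cofinite. I would try to obtain it by transporting the conjectural structure on $\affcat$ (\cref{conj:vtc}) along the conformal embedding $\singvoa \otimes H \into \affvoa$, using the vertex-algebra extension theory of \cite{CKLR}: since $\affvoa$ is the simple-current extension \eqref{eq:vacdecomp} of $\singvoa \otimes H$, the category $\affcat$ is a category of local modules over this extension inside the Deligne product of $\singcat$ and $\fockcat$, and one hopes to descend its tensor structure to $\singcat$. Granting the structure, I would verify monoidality by projecting the $\affvoa$ Grothendieck fusion rules \eqref{eq:grfusionrules} onto the $H$-commutant, factoring out the Heisenberg (Fock) contributions, and checking that the resulting $\singvoa$-fusion products reproduce the $\qgrp$ tensor decompositions of \cref{prop:TP1xL,prop:TP3xM,prop:TP4xM,prop:TP8xM,prop:TP34x34,prop:TP4x4} under \eqref{eq:KLequiv}. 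The indecomposable summands $K$, $R$, $S$ and $N$ of \cref{prop:TP4x4} furnish especially stringent tests, since their Loewy diagrams in \cref{fig:LoewyKRSN} must be reproduced exactly and not merely on the Grothendieck group.

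Beyond the $\sltwo$ case, I expect two genuinely new difficulties: establishing the vertex tensor structure on $\singcat$ without cofiniteness, and upgrading the Grothendieck-level agreement of fusion rules to honest natural isomorphisms of the associativity and braiding constraints. A complementary organising principle I would pursue in parallel is the uprolling philosophy of \cite{CGR}: if the Feigin--Tipunin algebra $\octvoa$ corresponds, as a quasi-Hopf algebra, to the restricted quantum group $\Uqbar{\slthree}$ (the higher-rank analogue of the picture of \cite{FT}), then passing to the $H$-orbifold $\singvoa$ on the vertex-algebra side should mirror unrolling $\Uqbar{\slthree}$ to $\qgrp$ on the quantum-group side, yielding \cref{conj:bte} as a consequence. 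Each of these auxiliary correspondences is, however, itself open in rank greater than one, which is precisely why \cref{conj:bte} is stated as a conjecture rather than proved outright.
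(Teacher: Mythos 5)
The statement you are addressing is a conjecture: the paper offers no proof of \cref{conj:bte}, only motivation (the matching of atypicality loci in \cref{fig:atypwts,fig:atypsing} under the affine map $\lambda \mapsto \varphi(\lambda)+\rho$) and consistency tests (inducing the $\qgrp$ tensor products of \cref{tensorsec} up to $\affvoa$ and comparing with the Verlinde-formula Grothendieck fusion rules of \cite{KRW}). Your proposal correctly identifies this evidence and honestly flags that the two essential ingredients --- a vertex tensor structure on $\singcat$ and the upgrade from Grothendieck-level agreement to an actual braided equivalence --- remain open, so in that sense it is faithful to the paper's epistemic status. Two steps of your programme, however, would not go through as written.

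First, in the abelian stage you assert that full faithfulness and essential surjectivity ``would follow once the simples, their first extensions, and their projective covers are matched.'' Matching composition factors, $\Ext^1$-groups and even Loewy diagrams does not determine an abelian category: you would need to identify the endomorphism algebras of projective generators block by block (equivalently, the quiver \emph{with relations}, not just the quiver), and nothing in \cref{cor:irredext}, \cref{thm:LoewyP} or \cite{CKLR} supplies the relations on the $\singvoa$ side. Second, and more seriously, your route to the tensor structure runs in the wrong direction. The extension theory of \cite{CKL,CKM1,CKLR} provides an induction functor that is monoidal \emph{from} (a subcategory of) the Deligne product $\singcat \boxtimes \fockcat$ \emph{to} the local modules $\affcat$; it gives no mechanism for descending a vertex tensor structure from $\affcat$ to the Heisenberg commutant, and ``projecting the $\affvoa$ fusion rules onto the $H$-commutant'' is not a tensor functor in any established sense. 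This is precisely why the paper must \emph{assume} the tensor structure on $\singcat$ (\cref{conj:btetyp}) as an input and then induce upward to generate predictions for $\affcat$ that can be tested against \cite{KRW}; the logical flow is quantum group $\to$ $\singcat$ $\to$ $\affcat$, not the reverse. Your closing paragraph on uprolling matches the paper's introductory motivation and is a reasonable complementary strategy, but, as you note, it is equally open in rank greater than one.
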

This \lcnamecref{conj:bte} will be tested in what follows by comparing the tensor products of $\qgrpcat_{\RR}$, computed in \cref{tensorsec}, with the Grothendieck fusion rules of $\affcat$, computed in \cite{KRW} and listed in \eqref{eq:grfusionrules}.

This correspondence predicts Loewy diagrams for the projective covers of the irreducible $\singvoa$-modules in $\singcat$.  More precisely, the prediction is that the irreducible $\singirr{8}{\mu}$ are already projective and for the projective covers of the other irreducibles, Loewy diagrams are obtained by replacing each composition factor in \cref{fig:LoewyP} by its image in $\singcat$ under the equivalence \eqref{eq:KLequiv}, see \cref{fig:LoewyE}.  For this, it is convenient to invert \eqref{eq:KLequiv}:
\begin{subequations} \label{eq:KLequiv'}
	\begin{gather}
		\irr{8}{\mu} \corrto \singirr{8}{\varphi^{-1}(\mu-\rho)} \qquad \text{for generic $\mu$}, \\
		\irr{4}{\mu} \corrto
		\begin{cases*}
			\singirr{8}{\varphi^{-1}(\mu-\rho)} & for generic $\mu$ in $\RR \omega_1 + 2P$ or $\RR \omega_2 + 2P$, \\
			\singirr{8}{\varphi^{-1}(\mu)} & for generic $\mu$ in $\omega_1 + \RR \omega_3 + 2P = \omega_2 + \RR \omega_3 + 2P$,
		\end{cases*}
		\\
		\irr{3}{\mu} \corrto \singirr{3}{\varphi^{-1}(\mu-\rho)} \quad \text{for $\mu \in \omega_1 + 2P$}, \qquad
		\irr{3}{\mu} \corrto \singirr{\overline{3}}{\varphi^{-1}(\mu)} \quad \text{for $\mu \in \omega_2 + 2P$}, \\
		\irr{1}{\mu} \corrto \singirr{1}{\varphi^{-1}(\mu)} \quad \text{for $\mu \in 2P$}.
	\end{gather}
\end{subequations}

\begin{figure}
	\centering
	\begin{tikzpicture}[->, thick, scale=0.72]
		\node[nom] at (-3,2) {$\singpro{16}{\lambda}$};
		\node (t) at (0,2) {$\singirr{4}{\lambda}$};
		\node (l) at (2,0) {$\singirr{4}{\lambda-\frac{3}{2}\omega_2}$};
		\node (r) at (-2,0) {$\singirr{4}{\lambda+\frac{3}{2}\omega_2}$};
		\node (b) at (0,-2) {$\singirr{4}{\lambda}$};
		\draw[verma] (t) -- (l);
		\draw[dverma] (t) -- (r);
		\draw[dverma] (l) -- (b);
		\draw[verma] (r) -- (b);
		\node[align=center] at (0,-3.25) {$\lambda \in -\frac{3}{2} \omega_1 + \RR \alpha_1 + Q$, \\ atypical of degree $1$.};
		\begin{scope}[shift={(7,0)}]
			\node (t) at (0,2) {$\singirr{4}{\lambda}$};
			\node (l) at (2,0) {$\singirr{4}{\lambda-\frac{3}{2}\omega_3}$};
			\node (r) at (-2,0) {$\singirr{4}{\lambda+\frac{3}{2}\omega_3}$};
			\node (b) at (0,-2) {$\singirr{4}{\lambda}$};
			\draw[verma] (t) -- (l);
			\draw[dverma] (t) -- (r);
			\draw[dverma] (l) -- (b);
			\draw[verma] (r) -- (b);
			\node[align=center] at (0,-3.25) {$\lambda \in -\frac{3}{2} \omega_1 + \RR \alpha_3 + Q$, \\ atypical of degree $1$.};
		\end{scope}
		\begin{scope}[shift={(14,0)}]
			\node (t) at (0,2) {$\singirr{4}{\lambda}$};
			\node (l) at (2,0) {$\singirr{4}{\lambda-\frac{3}{2}\omega_1}$};
			\node (r) at (-2,0) {$\singirr{4}{\lambda+\frac{3}{2}\omega_1}$};
			\node (b) at (0,-2) {$\singirr{4}{\lambda}$};
			\draw[verma] (t) -- (l);
			\draw[dverma] (t) -- (r);
			\draw[dverma] (l) -- (b);
			\draw[verma] (r) -- (b);
			\node[align=center] at (0,-3.25) {$\lambda \in -\frac{1}{2} \rho + \RR \alpha_2 + Q$, \\ atypical of degree $1$.};
		\end{scope}
	\end{tikzpicture}
	\\ \bigskip
	\begin{tikzpicture}[->, thick, xscale=0.6, yscale=0.72]
			\node (1) at (0,4) [] {$\singirr{3}{\lambda}$};
			\node (21) at (-4,2) [] {$\singirr{1}{\lambda+\frac{3}{2}\rho}$};
			\node (22) at (0,2) [] {$\singirr{1}{\lambda-\frac{3}{2}\omega_3}$};
			\node (23) at (4,2) [] {$\singirr{1}{\lambda+\frac{3}{2}\omega_3}$};
			\node (31) at (-6,0) [] {$\singirr{\overline{3}}{\lambda+3\omega_1}$};
			\node (32) at (-2,0) [] {$\singirr{\overline{3}}{\lambda+3\omega_2}$};
			\node (33) at (2,0) [] {$\singirr{3}{\lambda}$};
			\node (34) at (6,0) [] {$\singirr{\overline{3}}{\lambda}$};
			\node (41) at (-4,-2) [] {$\singirr{1}{\lambda+\frac{3}{2}\rho}$};
			\node (42) at (0,-2) [] {$\singirr{1}{\lambda-\frac{3}{2}\omega_3}$};
			\node (43) at (4,-2) [] {$\singirr{1}{\lambda+\frac{3}{2}\omega_3}$};
			\node (5) at (0,-4) [] {$\singirr{3}{\lambda}$};
			\draw[verma] (1) -- (22);
			\draw[verma] (1) -- (23);
			\draw[verma] (22) -- (34);
			\draw[verma] (23) -- (34);
			\draw[verma] (21) -- (32);
			\draw[verma] (21) -- (33);
			\draw[verma] (32) -- (43);
			\draw[verma] (33) -- (43);
			\draw[verma] (31) -- (41);
			\draw[verma] (31) -- (42);
			\draw[verma] (41) -- (5);
			\draw[verma] (42) -- (5);
			\draw[dverma] (1) -- (21);
			\draw[dverma] (21) -- (31);
			\draw[dverma] (22) -- (31);
			\draw[dverma] (23) -- (32);
			\draw[dverma] (23) -- (33);
			\draw[dverma] (32) -- (41);
			\draw[dverma] (33) -- (41);
			\draw[dverma] (34) -- (42);
			\draw[dverma] (34) -- (43);
			\draw[dverma] (43) -- (5);
			\draw[other] (22) -- (33);
			\draw[other] (33) -- (42);
			\node[nom] at (-6,4) {$\singpro{24}{\lambda}$};
			\node[align=center] at (0,-5.25) {$\lambda \in -\frac{1}{2} \rho + Q$, \\ atypical of degree $2$.};
			\begin{scope}[shift={(14.5,0)}]
				\node (1) at (0,4) [] {$\singirr{\overline{3}}{\lambda}$};
				\node (21) at (-4,2) [] {$\singirr{1}{\lambda-\frac{3}{2}\omega_3}$};
				\node (22) at (0,2) [] {$\singirr{1}{\lambda+\frac{3}{2}\omega_3}$};
				\node (23) at (4,2) [] {$\singirr{1}{\lambda-\frac{3}{2}\rho}$};
				\node (31) at (-6,0) [] {$\singirr{3}{\lambda}$};
				\node (32) at (-2,0) [] {$\singirr{3}{\lambda-3\omega_2}$};
				\node (33) at (2,0) [] {$\singirr{\overline{3}}{\lambda}$};
				\node (34) at (6,0) [] {$\singirr{3}{\lambda-3\omega_1}$};
				\node (41) at (-4,-2) [] {$\singirr{1}{\lambda-\frac{3}{2}\omega_3}$};
				\node (42) at (0,-2) [] {$\singirr{1}{\lambda+\frac{3}{2}\omega_3}$};
				\node (43) at (4,-2) [] {$\singirr{1}{\lambda-\frac{3}{2}\rho}$};
				\node (5) at (0,-4) [] {$\singirr{\overline{3}}{\lambda}$};
				\draw[verma] (1) -- (22);
				\draw[verma] (1) -- (23);
				\draw[verma] (22) -- (34);
				\draw[verma] (23) -- (34);
				\draw[verma] (21) -- (32);
				\draw[verma] (21) -- (33);
				\draw[verma] (32) -- (43);
				\draw[verma] (33) -- (43);
				\draw[verma] (31) -- (41);
				\draw[verma] (31) -- (42);
				\draw[verma] (41) -- (5);
				\draw[verma] (42) -- (5);
				\draw[dverma] (1) -- (21);
				\draw[dverma] (21) -- (31);
				\draw[dverma] (22) -- (31);
				\draw[dverma] (23) -- (32);
				\draw[dverma] (23) -- (33);
				\draw[dverma] (32) -- (41);
				\draw[dverma] (33) -- (41);
				\draw[dverma] (34) -- (42);
				\draw[dverma] (34) -- (43);
				\draw[dverma] (43) -- (5);
				\draw[other] (22) -- (33);
				\draw[other] (33) -- (42);
				\node[nom] at (-6,4) {$\singpro{\overline{24}}{\lambda}$};
				\node[align=center] at (0,-5.25) {$\lambda \in -\frac{1}{2} \rho + Q$, \\ atypical of degree $2$.};
			\end{scope}
		\end{tikzpicture}
	\\ \bigskip
	\begin{tikzpicture}[->, thick, xscale=1.1, yscale=0.72]
			\node (1) at (0,4) [] {$\singirr{1}{\lambda}$};
			\node (21) at (-5,2) [] {$\singirr{\overline{3}}{\lambda+\frac{3}{2}\rho}$};
			\node (22) at (-3,2) [] {$\singirr{3}{\lambda-\frac{3}{2}\omega_3}$};
			\node (23) at (-1,2) [] {$\singirr{3}{\lambda+\frac{3}{2}\omega_3}$};
			\node (24) at (1,2) [] {$\singirr{\overline{3}}{\lambda-\frac{3}{2}\omega_3}$};
			\node (25) at (3,2) [] {$\singirr{\overline{3}}{\lambda+\frac{3}{2}\omega_3}$};
			\node (26) at (5,2) [] {$\singirr{3}{\lambda-\frac{3}{2}\rho}$};
			\node (31) at (-6,0) [] {$\singirr{1}{\lambda+3\omega_1}$};
			\node (32) at (-4,0) [] {$\singirr{1}{\lambda+3\omega_2}$};
			\node (33) at (-2,0) [] {$\singirr{1}{\lambda-3\omega_3}$};
			\node (34) at (0,0) [] {${\singirr{1}{\lambda}}{}^{\oplus4}$};
			\node (35) at (2,0) [] {$\singirr{1}{\lambda+3\omega_3}$};
			\node (36) at (4,0) [] {$\singirr{1}{\lambda-3\omega_2}$};
			\node (37) at (6,0) [] {$\singirr{1}{\lambda-3\omega_1}$};
			\node (41) at (-5,-2) [] {$\singirr{\overline{3}}{\lambda+\frac{3}{2}\rho}$};
			\node (42) at (-3,-2) [] {$\singirr{3}{\lambda-\frac{3}{2}\omega_3}$};
			\node (43) at (-1,-2) [] {$\singirr{3}{\lambda+\frac{3}{2}\omega_3}$};
			\node (44) at (1,-2) [] {$\singirr{\overline{3}}{\lambda-\frac{3}{2}\omega_3}$};
			\node (45) at (3,-2) [] {$\singirr{\overline{3}}{\lambda+\frac{3}{2}\omega_3}$};
			\node (46) at (5,-2) [] {$\singirr{3}{\lambda-\frac{3}{2}\rho}$};
			\node (5) at (0,-4) [] {$\singirr{1}{\lambda}$};
			\draw[verma] (1) -- (25);
			\draw[verma] (1) -- (26);
			\draw[verma] (25) -- (37);
			\draw[verma] (26) -- (37);
			\draw[verma] (21) -- (32);
			\draw[verma] (21) -- (34);
			\draw[verma] (32) -- (43);
			\draw[verma] (34) -- (43);
			\draw[verma] (22) -- (33);
			\draw[verma] (22) -- (34);
			\draw[verma] (33) -- (44);
			\draw[verma] (34) -- (44);
			\draw[verma] (23) -- (34);
			\draw[verma] (23) -- (35);
			\draw[verma] (34) -- (45);
			\draw[verma] (35) -- (45);
			\draw[verma] (24) -- (34);
			\draw[verma] (24) -- (36);
			\draw[verma] (34) -- (46);
			\draw[verma] (36) -- (46);
			\draw[verma] (31) -- (41);
			\draw[verma] (31) -- (42);
			\draw[verma] (41) -- (5);
			\draw[verma] (42) -- (5);
			\draw[dverma] (1) -- (21);
			\draw[dverma] (1) -- (22);
			\draw[dverma] (21) -- (31);
			\draw[dverma] (22) -- (31);
			\draw[dverma] (23) -- (32);
			\draw[dverma] (32) -- (41);
			\draw[dverma] (34) -- (41);
			\draw[dverma] (24) -- (33);
			\draw[dverma] (33) -- (42);
			\draw[dverma] (34) -- (42);
			\draw[dverma] (25) -- (34);
			\draw[dverma] (25) -- (35);
			\draw[dverma] (35) -- (43);
			\draw[dverma] (26) -- (34);
			\draw[dverma] (26) -- (36);
			\draw[dverma] (36) -- (44);
			\draw[dverma] (37) -- (45);
			\draw[dverma] (37) -- (46);
			\draw[dverma] (45) -- (5);
			\draw[dverma] (46) -- (5);
			\draw[other] (1) -- (23);
			\draw[other] (1) -- (24);
			\draw[other] (43) -- (5);
			\draw[other] (44) -- (5);
			\node[nom] at (-6,4) {$\singpro{48}{\lambda}$};
			\node[align=center] at (0,-5.25) {$\lambda \in Q$, \\ atypical of degree $2$.};
		\end{tikzpicture}
	\caption{Conjectural Loewy diagrams for the projective covers of the atypical irreducible $\singvoa$-modules $\singirr{4}{\lambda}$, $\singirr{3}{\lambda}$, $\singirr{\overline{3}}{\lambda}$ and $\singirr{1}{\lambda}$.} \label{fig:LoewyE}
\end{figure}

To illustrate this replacement process, let $\singpro{24}{\lambda}$ denote the projective cover of $\singirr{3}{\lambda}$, $\lambda \in -\frac{1}{2} \rho + Q$.  Under the equivalence \eqref{eq:KLequiv}, $\singirr{3}{\lambda} \corrto \irr{3}{\mu}$ so $\singpro{24}{\lambda} \corrto \pro{24}{\mu}$, where $\mu = \varphi(\lambda) + \rho \in \omega_1 + 2P$.  The composition factors of $\pro{24}{\mu}$ are given in \cref{fig:LoewyP}.  Ignoring repetitions, there are only $7$:
\begin{equation}
	\irr{3}{\mu}, \quad \irr{1}{\mu+\alpha_2}, \quad \irr{1}{\mu-\alpha_2}, \quad \irr{1}{\mu+\alpha_2-2\alpha_3}, \quad
	\irr{3}{\mu+\alpha_3}, \quad \irr{3}{\mu+2\alpha_2-\alpha_3} \quad \text{and} \quad \irr{3}{\mu-\alpha_3}.
\end{equation}
Noting that $\mu\pm\alpha_3, \mu+2\alpha_2-\alpha_3 \in \omega_2 + 2P$, the corresponding $\singvoa$-modules under \eqref{eq:KLequiv'} are
\begin{equation}
	\singirr{3}{\lambda}, \quad \singirr{1}{\lambda+\frac{3}{2}\rho}, \quad \singirr{1}{\lambda-\frac{3}{2}\omega_3}, \quad
	\singirr{1}{\lambda+\frac{3}{2}\omega_3}, \quad \singirr{\overline{3}}{\lambda+3\omega_1}, \quad
	\singirr{\overline{3}}{\lambda} \quad \text{and} \quad \singirr{\overline{3}}{\lambda+3\omega_2},
\end{equation}
respectively.  For this, it is convenient to record the following useful facts:
\begin{equation} \label{eq:defvarphiinv}
	\varphi^{-1}(\alpha_1) = -\tfrac{3}{2} \omega_3, \quad
	\varphi^{-1}(\alpha_2) = \tfrac{3}{2} \omega_2 \quad \text{and} \quad
	\varphi^{-1}(\alpha_3) = \tfrac{3}{2} \omega_1.
\end{equation}
Making these replacements in the Loewy diagram of $\pro{24}{\mu}$ now gives that of $\singpro{24}{\lambda}$.

Note that the typical, atypical of degree $1$ and atypical of degree $2$ irreducibles $\singirr{d}{\lambda}$ appear in $1$, $2$ and $3$ distinct Loewy layers of its projective cover, suggesting that the Virasoro zero-mode acts on the projectives with Jordan blocks of rank $1$, $2$ and $3$, respectively.

\subsection{Reconstructing $\affvoa$-modules} \label{sec:KLrecon}

Just as $\singvoa$-modules, tensored with Fock modules, may be obtained by restricting $\affvoa$-modules, so too may $\affvoa$-modules be obtained by \emph{inducing} $\singvoa$-modules, again after tensoring with appropriate Fock modules.  For the situation at hand, the setup is as follows:
\begin{itemize}
	\item Restriction defines a functor from the category $\affcat$ of weight $\affvoa$-modules with finite multiplicities and real weights to a direct limit completion \cite{CMY-lim} of the Deligne product of the category $\singcat$ of $\singvoa$-modules and the category $\fockcat$ of Fock modules over $H$.
	\item The Fock modules in $\fockcat$ will be restricted to having \emph{real} highest weights so that $\fockcat$ is a vertex tensor category \cite[Theorem~2.3]{CKLR}.  This is the reason for restricting $\affcat$ to have real weights.
	\item \cref{conj:btetyp} assumes that $\singcat$ is also a vertex tensor category.
	\item As $\fockcat$ is semisimple, the Deligne product of $\singcat$ and $\fockcat$ is then a vertex tensor category \cite[Theorem~5.5]{CKM2} as is its direct limit completion \cite{CMY-lim}.
\end{itemize}
As $\affvoa$ is a simple current extension in this direct limit completion \cite{CMY-lim,AR}, it now follows from \cite{CKL,CKM1} that $\affcat$ is the category of local modules for $\affvoa$, viewed as a commutative algebra object in the direct limit completion.  There is therefore an induction functor that maps any module $V$ in the completion that centralises the algebra object $\affvoa$ to an object $\induct{V}$ in $\affcat$.  Moreover, this functor is a vertex tensor functor \cite{CKM1}, meaning that it respects the fusion products of the completion and $\affcat$:
\begin{equation} \label{eq:indfusion}
	\induct{V \singfuse V'} \cong \induct{V} \afffuse \induct{V'}.
\end{equation}

Practically, one starts with a $\singvoa$-module such as $\singirr{8}{\lambda}$, tensors it with some Fock module $\fock{\mu}$ and induces to a $\affvoa$-module by fusing with the decomposition \eqref{eq:vacdecomp}:
\begin{equation}
	\induct{\singirr{8}{\lambda} \otimes \fock{\mu}} = \bigoplus_{\nu \in Q} \left( \singirr{8}{\lambda} \otimes \fock{\mu} \right) \singfuse \left( \singirr{1}{\nu} \otimes \fock{\nu} \right) = \bigoplus_{\nu \in Q} \singirr{8}{\lambda+\nu} \otimes \fock{\mu+\nu}.
\end{equation}
The resulting object in the direct limit completion defines a local $\affvoa$-module if and only if the conformal weights of the summands differ by integers.  By \eqref{eq:cosetRchar}, the conformal weights of $\singirr{8}{\lambda} \otimes \fock{\mu}$ are given ($\bmod{1}$) by
\begin{equation}
	\Delta(\lambda,\mu) = \tfrac{1}{3} \norm{\lambda}^2 - \tfrac{1}{2} - \tfrac{1}{3} \norm{\mu}^2 = \tfrac{1}{3} \killing{\lambda-\mu}{\lambda+\mu} - \tfrac{1}{2}.
\end{equation}
The difference between the conformal weights of summands whose $\nu$-labels differ by $\beta \in Q$ is then
\begin{equation}
	\Delta(\lambda+\nu+\beta,\mu+\nu+\beta) - \Delta(\lambda+\nu,\mu+\nu) = \tfrac{2}{3} \killing{\lambda-\mu}{\beta}.
\end{equation}

This proves that inducing $\singirr{8}{\lambda} \otimes \fock{\mu}$ gives an $\affvoa$-module if and only if $\lambda-\mu \in \frac{3}{2} P$.  In this case, \eqref{eq:cosetstchar} identifies the result:
\begin{subequations} \label{eq:induct}
	\begin{equation} \label{eq:identst}
		\induct{\singirr{8}{\lambda} \otimes \fock{\mu}} \cong \sfmod{\frac{2}{3} (\lambda-\mu)}{\arel{\lambda}}.
	\end{equation}
	Similarly, the decompositions \eqref{eq:vacdecomp}, \eqref{eq:Sdecomp} and \eqref{eq:Ldecomp} (and their spectral flows) lead to the following identifications, again assuming that $\lambda-\mu \in \frac{3}{2} P$:
	\begin{gather}
		\induct{\singirr{4}{\lambda} \otimes \fock{\mu}} \cong \label{eq:induce4}
		\begin{cases}
			\sfmod{\frac{2}{3} (\lambda-\mu)}{\asemi{\lambda'}}, & \lambda \in -\tfrac{3}{2} \omega_1 + \RR \alpha_1 + Q, \\
			\sfaut{\frac{2}{3} (\lambda-\mu)} \weylmod{2}{\asemi{\weylmod{2}{\lambda'}}}, & \lambda \in -\tfrac{3}{2} \omega_1 + \RR \alpha_3 + Q, \\
			\sfaut{\frac{2}{3} (\lambda-\mu)} \weylaut{1} \weylmod{2}{\asemi{\weylaut{2} \weylmod{1}{\lambda'}}}, & \lambda \in -\tfrac{1}{2} \rho + \RR \alpha_2 + Q,
		\end{cases}
	\intertext{(here, $\lambda' \in \lambda + Q$ is chosen so that $\lambda' \in -\frac{3}{2} \omega_1 + \RR \alpha_1$, $-\frac{3}{2} \omega_1 + \RR \alpha_3$ and $-\frac{3}{2} \omega_3 + \RR \alpha_2$, respectively)}
		\begin{aligned}
			\induct{\singirr{3}{\lambda} \otimes \fock{\mu}} &\cong \sfmod{\frac{2}{3} (\lambda-\mu)}{\airr{-\frac{1}{2} \rho}}, \\
			\induct{\singirr{\overline{3}}{\lambda} \otimes \fock{\mu}} &\cong \sfaut{\frac{2}{3} (\lambda-\mu)} \conjmod{\airr{-\frac{1}{2} \rho}},
		\end{aligned}
		\qquad \lambda \in -\tfrac{1}{2} \rho + Q, \label{eq:induce3} \\
		\induct{\singirr{1}{\lambda} \otimes \fock{\mu}} \cong \sfmod{\frac{2}{3} (\lambda-\mu)}{\airr{0}}, \qquad \lambda \in Q. \label{eq:induce1}
	\end{gather}
\end{subequations}

With the inductions \eqref{eq:induct} identified, it is now straightforward to predict Loewy diagrams for the projective covers of the irreducible $\affvoa$-modules: the irreducible relaxed modules $\arel{\lambda}$ are already projective and Loewy diagrams for the remaining projective covers are obtained by tensoring each composition factor in \cref{fig:LoewyE} by $\fock{\mu}$ and inducing.  The results are pictured in \cref{fig:LoewyAff}.
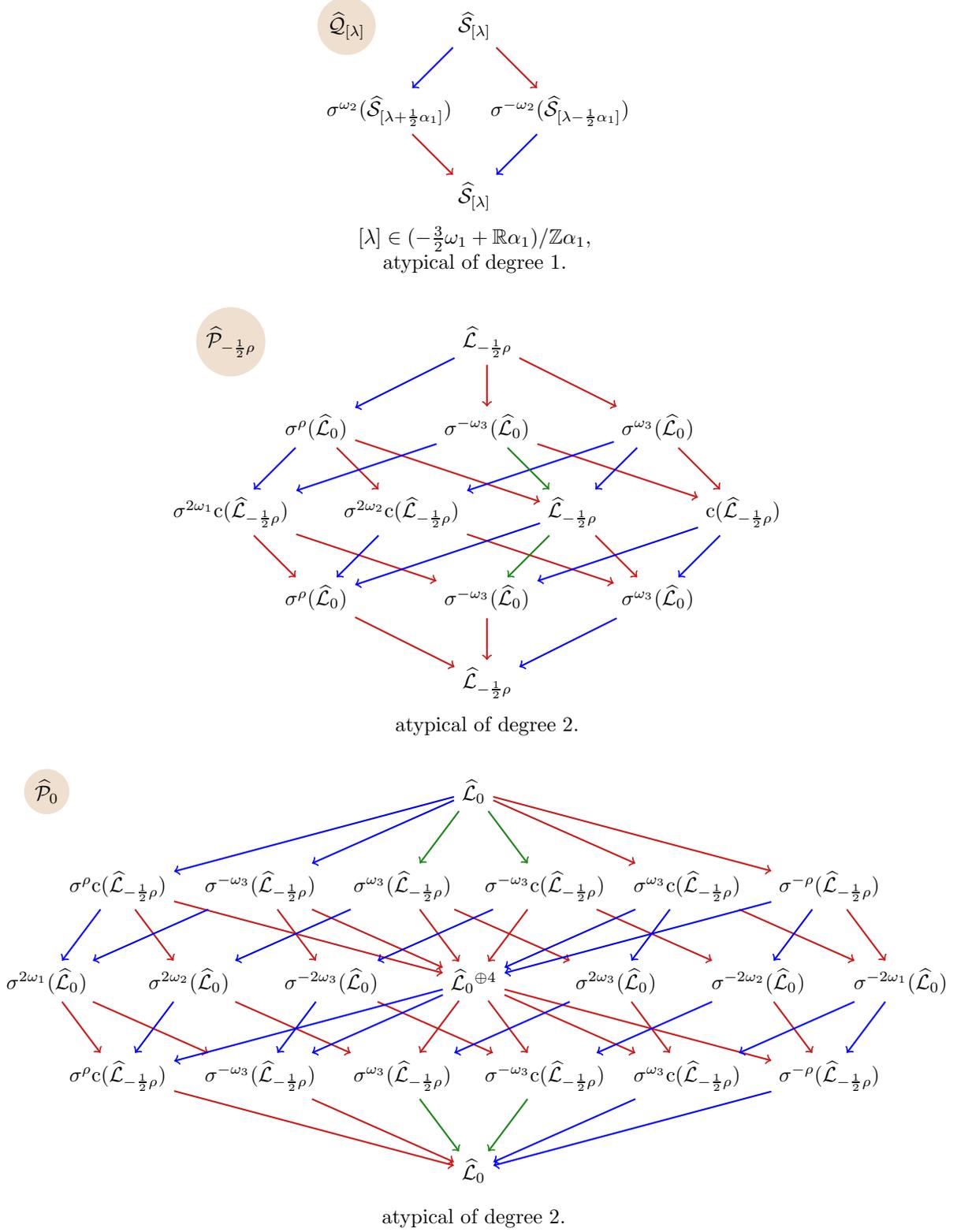
\begin{figure}
	\centering
	\begin{tikzpicture}[->, thick, scale=0.72]
		\node[nom] at (-3,2) {$\asemipro{\lambda}$};
		\node (t) at (0,2) {$\asemi{\lambda}$};
		\node (l) at (2,0) {$\sfmod{-\omega_2}{\asemi{\lambda-\frac{1}{2}\alpha_1}}$};
		\node (r) at (-2,0) {$\sfmod{\omega_2}{\asemi{\lambda+\frac{1}{2}\alpha_1}}$};
		\node (b) at (0,-2) {$\asemi{\lambda}$};
		\draw[verma] (t) -- (l);
		\draw[dverma] (t) -- (r);
		\draw[dverma] (l) -- (b);
		\draw[verma] (r) -- (b);
		\node[align=center] at (0,-3.25) {$[\lambda] \in (-\frac{3}{2} \omega_1 + \RR \alpha_1) / \ZZ \alpha_1$, \\ atypical of degree $1$.};
	\end{tikzpicture}
	\\ \bigskip
	\begin{tikzpicture}[->, thick, scale=0.72]
			\node (1) at (0,4) [] {$\airr{-\frac{1}{2}\rho}$};
			\node (21) at (-4,2) [] {$\sfmod{\rho}{\airr{0}}$};
			\node (22) at (0,2) [] {$\sfmod{-\omega_3}{\airr{0}}$};
			\node (23) at (4,2) [] {$\sfmod{\omega_3}{\airr{0}}$};
			\node (31) at (-6,0) [] {$\sfaut{2\omega_1} \conjmod{\airr{-\frac{1}{2}\rho}}$};
			\node (32) at (-2,0) [] {$\sfaut{2\omega_2} \conjmod{\airr{-\frac{1}{2}\rho}}$};
			\node (33) at (2,0) [] {$\airr{-\frac{1}{2}\rho}$};
			\node (34) at (6,0) [] {$\conjmod{\airr{-\frac{1}{2}\rho}}$};
			\node (41) at (-4,-2) [] {$\sfmod{\rho}{\airr{0}}$};
			\node (42) at (0,-2) [] {$\sfmod{-\omega_3}{\airr{0}}$};
			\node (43) at (4,-2) [] {$\sfmod{\omega_3}{\airr{0}}$};
			\node (5) at (0,-4) [] {$\airr{-\frac{1}{2}\rho}$};
			\draw[verma] (1) -- (22);
			\draw[verma] (1) -- (23);
			\draw[verma] (22) -- (34);
			\draw[verma] (23) -- (34);
			\draw[verma] (21) -- (32);
			\draw[verma] (21) -- (33);
			\draw[verma] (32) -- (43);
			\draw[verma] (33) -- (43);
			\draw[verma] (31) -- (41);
			\draw[verma] (31) -- (42);
			\draw[verma] (41) -- (5);
			\draw[verma] (42) -- (5);
			\draw[dverma] (1) -- (21);
			\draw[dverma] (21) -- (31);
			\draw[dverma] (22) -- (31);
			\draw[dverma] (23) -- (32);
			\draw[dverma] (23) -- (33);
			\draw[dverma] (32) -- (41);
			\draw[dverma] (33) -- (41);
			\draw[dverma] (34) -- (42);
			\draw[dverma] (34) -- (43);
			\draw[dverma] (43) -- (5);
			\draw[other] (22) -- (33);
			\draw[other] (33) -- (42);
			\node[nom] at (-6,4) {$\airrpro{-\frac{1}{2}\rho}$};
			\node[align=center] at (0,-5) {atypical of degree $2$.};
		\end{tikzpicture}
	\\ \bigskip
	\begin{tikzpicture}[->, thick, xscale=1.2, yscale=0.8]
			\node (1) at (0,4) [] {$\airr{0}$};
			\node (21) at (-5,2) [] {$\sfaut{\rho} \conjmod{\airr{-\frac{1}{2}\rho}}$};
			\node (22) at (-3,2) [] {$\sfmod{-\omega_3}{\airr{-\frac{1}{2}\rho}}$};
			\node (23) at (-1,2) [] {$\sfmod{\omega_3}{\airr{-\frac{1}{2}\rho}}$};
			\node (24) at (1,2) [] {$\sfaut{-\omega_3} \conjmod{\airr{-\frac{1}{2}\rho}}$};
			\node (25) at (3,2) [] {$\sfaut{\omega_3} \conjmod{\airr{-\frac{1}{2}\rho}}$};
			\node (26) at (5,2) [] {$\sfmod{-\rho}{\airr{-\frac{1}{2}\rho}}$};
			\node (31) at (-6,0) [] {$\sfmod{2\omega_1}{\airr{0}}$};
			\node (32) at (-4,0) [] {$\sfmod{2\omega_2}{\airr{0}}$};
			\node (33) at (-2,0) [] {$\sfmod{-2\omega_3}{\airr{0}}$};
			\node (34) at (0,0) [] {${\airr{0}}{}^{\oplus4}$};
			\node (35) at (2,0) [] {$\sfmod{2\omega_3}{\airr{0}}$};
			\node (36) at (4,0) [] {$\sfmod{-2\omega_2}{\airr{0}}$};
			\node (37) at (6,0) [] {$\sfmod{-2\omega_1}{\airr{0}}$};
			\node (41) at (-5,-2) [] {$\sfaut{\rho} \conjmod{\airr{-\frac{1}{2}\rho}}$};
			\node (42) at (-3,-2) [] {$\sfmod{-\omega_3}{\airr{-\frac{1}{2}\rho}}$};
			\node (43) at (-1,-2) [] {$\sfmod{\omega_3}{\airr{-\frac{1}{2}\rho}}$};
			\node (44) at (1,-2) [] {$\sfaut{-\omega_3} \conjmod{\airr{-\frac{1}{2}\rho}}$};
			\node (45) at (3,-2) [] {$\sfaut{\omega_3} \conjmod{\airr{-\frac{1}{2}\rho}}$};
			\node (46) at (5,-2) [] {$\sfmod{-\rho}{\airr{-\frac{1}{2}\rho}}$};
			\node (5) at (0,-4) [] {$\airr{0}$};
			\draw[verma] (1) -- (25);
			\draw[verma] (1) -- (26);
			\draw[verma] (25) -- (37);
			\draw[verma] (26) -- (37);
			\draw[verma] (21) -- (32);
			\draw[verma] (21) -- (34);
			\draw[verma] (32) -- (43);
			\draw[verma] (34) -- (43);
			\draw[verma] (22) -- (33);
			\draw[verma] (22) -- (34);
			\draw[verma] (33) -- (44);
			\draw[verma] (34) -- (44);
			\draw[verma] (23) -- (34);
			\draw[verma] (23) -- (35);
			\draw[verma] (34) -- (45);
			\draw[verma] (35) -- (45);
			\draw[verma] (24) -- (34);
			\draw[verma] (24) -- (36);
			\draw[verma] (34) -- (46);
			\draw[verma] (36) -- (46);
			\draw[verma] (31) -- (41);
			\draw[verma] (31) -- (42);
			\draw[verma] (41) -- (5);
			\draw[verma] (42) -- (5);
			\draw[dverma] (1) -- (21);
			\draw[dverma] (1) -- (22);
			\draw[dverma] (21) -- (31);
			\draw[dverma] (22) -- (31);
			\draw[dverma] (23) -- (32);
			\draw[dverma] (32) -- (41);
			\draw[dverma] (34) -- (41);
			\draw[dverma] (24) -- (33);
			\draw[dverma] (33) -- (42);
			\draw[dverma] (34) -- (42);
			\draw[dverma] (25) -- (34);
			\draw[dverma] (25) -- (35);
			\draw[dverma] (35) -- (43);
			\draw[dverma] (26) -- (34);
			\draw[dverma] (26) -- (36);
			\draw[dverma] (36) -- (44);
			\draw[dverma] (37) -- (45);
			\draw[dverma] (37) -- (46);
			\draw[dverma] (45) -- (5);
			\draw[dverma] (46) -- (5);
			\draw[other] (1) -- (23);
			\draw[other] (1) -- (24);
			\draw[other] (43) -- (5);
			\draw[other] (44) -- (5);
			\node[nom] at (-6,4) {$\airrpro{0}$};
			\node[align=center] at (0,-5) {atypical of degree $2$.};
		\end{tikzpicture}
	\caption{Conjectural Loewy diagrams for the projective covers of the atypical irreducible $\affvoa$-modules $\asemi{\lambda}$, $\airr{-\frac{1}{2}\rho}$ and $\airr{0}$.  Projective covers for their $D_6$ and spectral flow twists are obtained by applying the twist to each automorphism.} \label{fig:LoewyAff}
\end{figure}

To verify the (predicted) diagram of the projective cover $\asemipro{\lambda}$ of $\asemi{\lambda}$, $[\lambda] \in (-\frac{3}{2} \omega_1 + \RR \alpha_1) / \ZZ \alpha_1$, start with the (predicted) diagram in \cref{fig:LoewyE} for the projective cover $\singpro{16}{\lambda}$ of the irreducible $\singvoa$-module $\singirr{4}{\lambda}$.  The composition factors of $\singpro{16}{\lambda}$ induce, after tensoring with $\fock{\lambda}$, to give
\begin{equation}
	\asemi{\lambda'}, \quad \sfmod{\omega_2}{\asemi{(\lambda+\frac{3}{2}\omega_2)'}} \quad \text{and} \quad \sfmod{-\omega_2}{\asemi{(\lambda-\frac{3}{2}\omega_2)'}}
\end{equation}
(the first with multiplicity $2$), by \eqref{eq:induce4}.  Here, $\lambda'$ may be taken to be $\lambda$ because the latter already belongs to $-\frac{3}{2} \omega_1 + \RR \alpha_1$ by assumption.  However, $\lambda \pm \frac{3}{2}\omega_2$ does not.  To determine the corresponding primed weight, note that $\frac{3}{2}\omega_2 = \frac{1}{2} \alpha_1 + \alpha_2$ so we may take
\begin{equation}
	(\lambda \pm \tfrac{3}{2}\omega_2)' = \lambda \pm (\tfrac{3}{2}\omega_2 - \alpha_2) = \lambda \pm \tfrac{1}{2}\alpha_1 \in \lambda \pm \tfrac{3}{2}\omega_2 + Q,
\end{equation}
which does lie in $-\frac{3}{2} \omega_1 + \RR \alpha_1$.  The Loewy diagram in \cref{fig:LoewyAff} now follows.

Note again that the multiplicity with which the head of each projective appears suggests that the Virasoro zero-mode acts with rank-$2$ Jordan blocks on the $\asemipro{\lambda}$ and with rank-$3$ Jordan blocks on $\airrpro{0}$ and $\airrpro{-\frac{1}{2}\rho}$ (and their spectral flows).  This is consistent with the recent work \cite{AdaRel21} who have constructed indecomposable $\affvoa$-modules with rank-$2$ and $3$ blocks using inverse quantum hamiltonian reduction \cite{Ada-log,AdaRea20}.  It will be interesting to try to identify their constructions in the setting presented here.

\subsection{Fusion rules}\label{sec:KLfusion}

Another important check of this Kazhdan--Lusztig correspondence is that it is indeed a tensor equivalence: it maps tensor products in $\qgrpcat_{\RR}$ to fusion rules in $\singcat$.  As the latter are not known, the former will be used to deduce fusion rules in $\affcat$ (again using \cite{CKM1}).  These in turn will be compared with the Grothendieck fusion rules \eqref{eq:grfusionrules} computed for $\affcat$ in \cite{KRW}.  Note that a given Grothendieck fusion rule completely characterises the corresponding fusion rule if the fusion product in question happens to be completely reducible.

The first case to analyse is the generic fusion product of two relaxed $\affvoa$-modules (in which the result is completely reducible).  In the decomposition \eqref{eq:Rdecomp}, the $\singvoa$-modules are the irreducible $\singirr{8}{\lambda}$.  Their fusion product is computed using \cref{conj:bte,prop:TP8xM} as follows:
\begin{align}
	\singirr{8}{\lambda} \singfuse \singirr{8}{\mu}
	&\corrto \irr{8}{\varphi(\lambda)+\rho} \otimes \irr{8}{\varphi(\mu)+\rho}
	\cong \bigoplus_{i,j=1}^3 \irr{8}{\varphi(\lambda+\mu)+2\rho-\alpha_i-\alpha_j}
	= \bigoplus_{i,j=1}^3 \irr{8}{\varphi \left( \lambda+\mu + \varphi^{-1}(\alpha_3-\alpha_i-\alpha_j) \right) + \rho} \notag \\
	&\corrto 2 \, \singirr{8}{\lambda+\mu} \oplus \bigoplus_{i=1}^3 \Bigl( \singirr{8}{\lambda+\mu+\frac{3}{2}\omega_i} \oplus \singirr{8}{\lambda+\mu-\frac{3}{2}\omega_i} \Bigr).
\end{align}
Tensoring with $\fock{\lambda} \singfuse \fock{\mu} \cong \fock{\lambda+\mu}$ and inducing now gives
\begin{align}
	\arel{\lambda} \afffuse \arel{\mu}
	&\cong \induct{\singirr{8}{\lambda} \otimes \fock{\lambda}} \afffuse \induct{\singirr{8}{\mu} \otimes \fock{\mu}}
	\cong \induct{(\singirr{8}{\lambda} \singfuse \singirr{8}{\mu}) \otimes \fock{\lambda+\mu}} \notag \\
	&\cong 2 \, \induct{\singirr{8}{\lambda+\mu} \otimes \fock{\lambda+\mu}} \oplus \bigoplus_{i=1}^3 \Bigl( \induct{\singirr{8}{\lambda+\mu+\frac{3}{2}\omega_i} \otimes \fock{\lambda+\mu}} \oplus \induct{\singirr{8}{\lambda+\mu-\frac{3}{2}\omega_i} \otimes \fock{\lambda+\mu}} \Bigr) \notag \\
	&\cong 2 \, \arel{\lambda+\mu} \oplus \bigoplus_{i=1}^3 \Bigl( \sfmod{\omega_i}{\arel{\lambda+\mu+\frac{3}{2}\omega_i}} \oplus \sfmod{-\omega_i}{\arel{\lambda+\mu-\frac{3}{2}\omega_i}} \Bigr),
\end{align}
courtesy of \eqref{eq:indfusion} and \eqref{eq:identst}.  This correctly reproduces the generic fusion rule given in \eqref{eq:GFR88}.

Similar calculations recover the other generic fusion rules in \eqref{eq:grfusionrules} and thereby provide strong evidence for the proposed Kazhdan--Lusztig correspondence (\cref{conj:bte}).  Here, the generic fusion rule involving $\asemi{\lambda}$ and $\weylaut{1} \weylmod{2}{\asemi{\weylaut{2} \weylmod{1}{\mu}}}$, where $\lambda \in -\frac{3}{2} \omega_1 + \RR \alpha_1$ and $\mu \in -\frac{1}{2} \rho + \RR \alpha_2$, is computed to illustrate the different formulae in \eqref{eq:KLequiv}.

First, note that $\singirr{4}{\lambda} \singfuse \singirr{4}{\mu}$ corresponds to
\begin{equation}
	\irr{4}{\varphi(\lambda)+\rho} \otimes \irr{4}{\varphi(\mu)} \cong \irr{8}{\varphi(\lambda+\mu)+\rho} \oplus \irr{8}{\varphi(\lambda+\mu)+\rho-\alpha_1},
\end{equation}
by \cref{prop:TP34x34}, because $\bigl( \varphi(\lambda) + \rho \bigr)_2, \varphi(\mu)_3 \in 2\ZZ+1$.  The conjectural fusion rule in $\singcat$ is thus
\begin{equation}
	\singirr{4}{\lambda} \singfuse \singirr{4}{\mu} \cong \singirr{8}{\lambda+\mu} \oplus \singirr{8}{\lambda+\mu+\frac{3}{2}\omega_3},
\end{equation}
by \eqref{eq:defvarphiinv}.  Inducing then yields the generic fusion rule
\begin{equation}
	\asemi{\lambda} \afffuse \weylaut{1} \weylmod{2}{\asemi{\weylaut{2} \weylmod{1}{\mu}}} \cong \arel{\lambda+\mu} \oplus \sfmod{\omega_3}{\arel{\lambda+\mu+\frac{3}{2}\omega_3}}
\end{equation}
in $\affcat$, which is equivalent to \eqref{eq:GFR4ww4}.

More interesting are the non-generic fusion rules that result in reducible but indecomposable direct summands.  The structures of these indecomposables are not visible in the Grothendieck fusion rules of \eqref{eq:grfusionrules}, but they can now be predicted from the Loewy diagrams of the indecomposable projectives of $\affcat$ depicted in \cref{fig:LoewyAff}.

As an example, consider the tensor product rule
\begin{equation}
	\irr{3}{\lambda} \otimes \irr{8}{\mu} \cong \pro{24}{\lambda+\mu-\alpha_3}
\end{equation}
from \cref{prop:TP3xM}, which holds if $\lambda_i, \lambda_3 \in 2\ZZ+1$ and $\mu_1, \mu_2, \mu_3 \in 2\ZZ$.  Take $i=2$, so that these conditions are equivalent to $\lambda \in \omega_1 + 2P$ and $\mu \in \rho + 2P$.  As $\lambda+\mu-\alpha_3 \in \omega_1 + 2P$, the correspondence \eqref{eq:KLequiv} converts this tensor product rule into the fusion rule
\begin{equation}
	\singirr{3}{\lambda'} \singfuse \singirr{8}{\mu'} \cong \singpro{24}{\lambda'+\mu'},
\end{equation}
where $\lambda' = \varphi^{-1}(\lambda-\rho) \in -\frac{1}{2}\rho + Q$ and $\mu' = \varphi^{-1}(\mu-\rho) \in Q$.  Tensoring with $\fock{\lambda'+\mu'}$ and inducing now predicts the following fusion rule of $\affvoa$-modules:
\begin{equation}
	\airr{-\frac{1}{2}\rho} \afffuse \arel{0} \cong \airrpro{-\frac{1}{2}\rho}.
\end{equation}
One can use \eqref{eq:Rdegen2} to check that the composition factors of this fusion product match those given in the corresponding Grothendieck fusion rule \eqref{eq:GFR38}.

As a second example, suppose that one wished to identify the fusion product of the irreducible $\affvoa$-module $\arel{\lambda}$ and its conjugate $\conjmod{\arel{\lambda}} \cong \arel{-\lambda}$.  This can be computed from the $\singvoa$-module fusion product $\singirr{8}{\lambda} \singfuse \singirr{8}{-\lambda}$ which corresponds, under the Kazhdan--Lusztig correspondence \eqref{eq:KLequiv} to the $\qgrp$ tensor product $\irr{8}{\lambda'} \otimes \irr{8}{\mu'}$, where $\lambda' = \varphi(\lambda)+\rho$ and $\mu' = -\varphi(\lambda)+\rho$.  It is easy to verify that $\lambda'_i + \mu'_i \in 2\ZZ$, for $i=1,2,3$, hence that the required tensor product rule is
\begin{equation}
	\irr{8}{\varphi(\lambda)+\rho} \otimes \irr{8}{-\varphi(\lambda)+\rho} \cong 2\irr{8}{\rho} \oplus \pro{48}{0},
\end{equation}
by \cref{prop:TP8xM}.  Using the correspondence once again, specifically \eqref{eq:KLequiv'}, the $\singvoa$ fusion product is
\begin{equation}
	\singirr{8}{\lambda} \singfuse \singirr{8}{-\lambda} \cong 2\singirr{8}{0} \oplus \singpro{48}{0}.
\end{equation}
Tensoring with Fock spaces and inducing then gives the desired fusion rule for $\affvoa$:
\begin{equation}
	\arel{\lambda} \afffuse \conjmod{\arel{\lambda}} \cong 2\arel{0} \oplus \airrpro{0}.
\end{equation}
Many other affine fusion rules follow similarly from the tensor product computations of \cref{tensorsec}.

\subsection{An octuplet algebra}\label{sec:KLoctuplet}

In \cite{SemVir11,SemNot13}, Semikhatov introduced and studied $\slthree$ analogues of the well known triplet algebras $\triplet$ of Kausch \cite{Kausch:1990vg} (which correspond to $\sltwo$).  As these analogues are generated by $8$ $W_3$ primary fields, rather than $3$ Virasoro ones, he dubbed these the \emph{octuplet} algebras.  The first nontrivial octuplet algebra $\octvoa$ has central charge $c=-10$ and turns out to be a simple current extension of $\singvoa$, just as $\triplet$ is a simple current extension of $\singlet$.

Each of the $\singirr{1}{\lambda}$, $\lambda \in Q$, is a simple current, by \eqref{eq:vacdecomp}.  The conformal weight $\Delta_{\lambda}$ of its highest-weight vector is given by the difference between the minimal conformal weight of a vector of weight $\lambda$ in $\affvoa$ and the conformal weight $\frac{1}{3} \norm{\lambda}^2$ of the highest-weight vector of $\fock{\lambda}$.  Obviously, $\Delta_0 = 0$.  The next-smallest conformal weights occur when $\lambda = \pm\alpha_i$, $i=1,2,3$, in which case $\Delta_{\lambda} = \frac{5}{3}$, and when $\lambda = \pm3\omega_i$, $i=1,2,3$, giving $\Delta_{\lambda} = 4$.  It is this latter set of six simple currents that define $\octvoa$:
\begin{equation}
	\octvoa = \bigoplus_{\lambda \in 3P} \singirr{1}{\lambda}.
\end{equation}
The remaining two octuplet fields have weight $0$.  Their existence is easily checked by using the decomposition \eqref{eq:vacdecomp} to compute the character
\begin{equation}
	\ch{\singirr{1}{0}} = q^{5/12} \bigl( 1 + q^2 + 2q^3 + 5q^4 + \cdots \bigr)
\end{equation}
and noting that the $W_3$-symmetry can only account for $3$ of the $5$ fields of conformal weight $4$.

It is now straightforward to induce an irreducible $\singvoa$-module to obtain a (possibly twisted) irreducible $\octvoa$-module.  As $Q/3P = \bigl\{ [0], [\rho], [-\rho] \bigr\}$, the $\singirr{1}{\lambda}$ yield three induced modules $\octirr{1}{0}$, $\octirr{1}{\rho}$ and $\octirr{1}{-\rho}$, respectively, by fusing with $\octvoa$.  The first is the vacuum module of $\octvoa$ and the others are order-$3$ simple currents, by \eqref{eq:indfusion}.

To determine if an irreducible induced module is untwisted, it suffices to check if the conformal weights of the component $\singvoa$-modules differ by integers.  This will be the case if and only if the corresponding Fock space conformal weights do.  Since
\begin{equation}
	\tfrac{1}{3} \norm{\lambda \pm 3 \omega_i}^2 - \tfrac{1}{3} \norm{\lambda}^2 = 2 \pm \killing{2\lambda}{\omega_i}, \quad i=1,2,3,
\end{equation}
$\singirr{d}{\lambda}$ induces to an untwisted irreducible $\octvoa$-module if and only if $\lambda \in \frac{1}{2} Q$.  In particular, $\octirr{1}{0}$, $\octirr{1}{\alpha_3}$ and $\octirr{1}{-\alpha_3}$ are untwisted $\octvoa$-modules.

Similar analyses result in further untwisted irreducible $\octvoa$-modules.  Their top-space dimensions, $\slthree$-weights and conformal weights are summarised in \cref{tab:octuplet}.  Note that the $\singirr{8}{\lambda}$ with $\lambda \in \frac{1}{2} Q$ are reducible unless $\lambda \in Q$.  Similarly, the $\singirr{4}{\lambda}$ with $\lambda \in \frac{1}{2} Q$ are always reducible.  The number of irreducibles is finite in accord with the expectation that the octuplet algebra $\octvoa$ is $C_2$-cofinite.

\begin{table}
	\renewcommand{\arraystretch}{1.5}
	\centering
	\begin{tabular}{CCCCC}
		\singvoa \text{-mod.} & \octvoa \text{-mod.} & \text{top-space dim.} & \slthree \text{-weights} & \text{conf.\ weight} \\
		\hline\hline
		\multirow{3}{*}{$\singirr{1}{\lambda}$} & \octirr{1}{0} & 1 & 0 & 0 \\
		 & \octirr{1}{\rho} & 3 & \rho,\ \rho-3\omega_1,\ \rho-3\omega_2 & \frac{5}{3} \\
		 & \octirr{1}{-\rho} & 3 & -\rho,\ -\rho+3\omega_1,\ -\rho+3\omega_2 & \frac{5}{3} \\
		\hline
		\multirow{3}{*}{$\singirr{3}{\lambda}$} & \octirr{3}{-\frac{1}{2}\rho} & 1 & -\frac{1}{2}\rho & -\frac{1}{3} \\
		 & \octirr{3}{\frac{1}{2}\rho} & 3 & \frac{1}{2}\rho,\ \frac{1}{2}\rho-3\omega_1,\ \frac{1}{2}\rho-3\omega_2 & \frac{2}{3} \\
		 & \octirr{3}{\frac{3}{2}\rho} & 3 & -\frac{3}{2}\rho,\ -\frac{3}{2}\rho+3\omega_1,\ -\frac{3}{2}\rho+3\omega_2 & 1 \\
		\hline
		\multirow{3}{*}{$\singirr{\overline{3}}{\lambda}$} & \octirr{\overline{3}}{-\frac{1}{2}\rho} & 3 & -\frac{1}{2}\rho,\ -\frac{1}{2}\rho+3\omega_1,\ -\frac{1}{2}\rho+3\omega_2 & -\frac{1}{3} \\
		 & \octirr{\overline{3}}{\frac{1}{2}\rho} & 1 & \frac{1}{2}\rho & \frac{2}{3} \\
		 & \octirr{\overline{3}}{\frac{3}{2}\rho} & 3 & \frac{3}{2}\rho,\ \frac{3}{2}\rho-3\omega_1,\ \frac{3}{2}\rho-3\omega_2 & 1 \\
		\hline
		\multirow{3}{*}{$\singirr{8}{\lambda}$} & \octirr{8}{0} & 1 & 0 & -\frac{1}{2} \\
		 & \octirr{8}{\rho} & 3 & \rho,\ \rho-3\omega_1,\ \rho-3\omega_2 & \frac{1}{6} \\
		 & \octirr{8}{-\rho} & 3 & -\rho,\ -\rho+3\omega_1,\ -\rho+3\omega_2 & \frac{1}{6} \\
		\hline
	\end{tabular}
\caption{The dimensions, $\slthree$-weights and conformal weights of the top spaces of the (untwisted) irreducible $\octvoa$-modules $\octirr{d}{\lambda}$, where $[\lambda] \in Q/3P$.} \label{tab:octuplet}
\end{table}

Predictions for the fusion rules of these irreducible $\octvoa$-modules now follow by applying \eqref{eq:indfusion} to the conjectural fusion rules of the irreducible $\singvoa$-modules, themselves deduced from the tensor product computations of \cref{tensorsec} and the Kazhdan--Lusztig correspondence of \cref{conj:bte}.  Of course, $\octirr{1}{0}$ is the vacuum module and $\octirr{1}{\pm\rho}$ is easily seen to be an order-$3$ simple current:
\begin{equation}
	\octirr{1}{\pm\rho} \octfuse \octirr{1}{\pm\rho} \cong \octirr{1}{\mp\rho}, \quad
	\octirr{1}{\pm\rho} \octfuse \octirr{1}{\mp\rho} \cong \octirr{1}{0}.
\end{equation}
The orbits of the irreducible $\octvoa$-modules under fusion with $\octirr{1}{\rho}$ are as follows:
\begin{equation}
	\begin{gathered}
		\octirr{1}{0} \to \octirr{1}{\rho} \to \octirr{1}{-\rho} \to \octirr{1}{0}, \\
		\octirr{8}{0} \to \octirr{8}{\rho} \to \octirr{8}{-\rho} \to \octirr{1}{0},
	\end{gathered}
	\qquad
	\begin{gathered}
		\octirr{3}{-\frac{1}{2}\rho} \to \octirr{3}{\frac{1}{2}\rho} \to \octirr{3}{\frac{3}{2}\rho} \to \octirr{3}{-\frac{1}{2}\rho}, \\
		\octirr{\overline{3}}{-\frac{1}{2}\rho} \to \octirr{\overline{3}}{\frac{1}{2}\rho} \to \octirr{\overline{3}}{\frac{3}{2}\rho} \to \octirr{\overline{3}}{-\frac{1}{2}\rho}.
	\end{gathered}
\end{equation}

As fusion is associative, the remaining irreducible fusion rules are determined by the six involving the orbit representatives $\octirr{3}{\frac{3}{2}\rho}$, $\octirr{\overline{3}}{\frac{3}{2}\rho}$ and $\octirr{8}{0}$.  These rules are
\begin{equation}
	\begin{aligned}
		\octirr{3}{\frac{3}{2}\rho} \octfuse \octirr{3}{\frac{3}{2}\rho} &\cong \octquo{\overline{9}}{\frac{3}{2}\rho}, &
		\octirr{\overline{3}}{\frac{3}{2}\rho} \octfuse \octirr{\overline{3}}{\frac{3}{2}\rho} &\cong \octquo{9}{\frac{3}{2}\rho}, &
		\octirr{3}{\frac{3}{2}\rho} \octfuse \octirr{\overline{3}}{\frac{3}{2}\rho} &\cong \octirr{8}{0} \oplus \octirr{1}{0}, \\
		\octirr{3}{\frac{3}{2}\rho} \octfuse \octirr{8}{0} &\cong \octpro{24}{\frac{3}{2}\rho}, &
		\octirr{\overline{3}}{\frac{3}{2}\rho} \octfuse \octirr{8}{0} &\cong \octpro{\overline{24}}{\frac{3}{2}\rho}, &
		\octirr{8}{0} \octfuse \octirr{8}{0} &\cong 2\octirr{8}{0} \oplus \octpro{48}{0}.
	\end{aligned}
\end{equation}
\begin{figure}
	\centering
	\begin{tikzpicture}[->, thick, xscale=0.5, yscale=0.72]
		\node (1) at (0,4) [] {$\octirr{3}{\frac{3}{2}\rho}$};
		\node (21) at (-4,2) [] {$\octirr{1}{0}$};
		\node (22) at (0,2) [] {$\octirr{1}{0}$};
		\node (23) at (4,2) [] {$\octirr{1}{0}$};
		\node (33) at (0,0) [] {$\octirr{3}{\frac{3}{2}\rho}$};
		\draw[verma] (1) -- (22);
		\draw[verma] (1) -- (23);
		\draw[dverma] (1) -- (21);
		\draw[verma] (21) -- (33);
		\draw[other] (22) -- (33);
		\draw[dverma] (23) -- (33);
		\node[nom] at (-4,5) {$\octquo{9}{\frac{3}{2}\rho}$};
	\end{tikzpicture}
	\\ \bigskip
	\begin{tikzpicture}[->, thick, xscale=0.6, yscale=0.72]
			\node (1) at (0,4) [] {$\octirr{3}{\frac{3}{2}\rho}$};
			\node (21) at (-4,2) [] {$\octirr{1}{0}$};
			\node (22) at (0,2) [] {$\octirr{1}{0}$};
			\node (23) at (4,2) [] {$\octirr{1}{0}$};
			\node (31) at (-6,0) [] {$\octirr{\overline{3}}{\frac{3}{2}\rho}$};
			\node (32) at (-2,0) [] {$\octirr{\overline{3}}{\frac{3}{2}\rho}$};
			\node (33) at (2,0) [] {$\octirr{3}{\frac{3}{2}\rho}$};
			\node (34) at (6,0) [] {$\octirr{\overline{3}}{\frac{3}{2}\rho}$};
			\node (41) at (-4,-2) [] {$\octirr{1}{0}$};
			\node (42) at (0,-2) [] {$\octirr{1}{0}$};
			\node (43) at (4,-2) [] {$\octirr{1}{0}$};
			\node (5) at (0,-4) [] {$\octirr{3}{\frac{3}{2}\rho}$};
			\draw[verma] (1) -- (22);
			\draw[verma] (1) -- (23);
			\draw[verma] (22) -- (34);
			\draw[verma] (23) -- (34);
			\draw[verma] (21) -- (32);
			\draw[verma] (21) -- (33);
			\draw[verma] (32) -- (43);
			\draw[verma] (33) -- (43);
			\draw[verma] (31) -- (41);
			\draw[verma] (31) -- (42);
			\draw[verma] (41) -- (5);
			\draw[verma] (42) -- (5);
			\draw[dverma] (1) -- (21);
			\draw[dverma] (21) -- (31);
			\draw[dverma] (22) -- (31);
			\draw[dverma] (23) -- (32);
			\draw[dverma] (23) -- (33);
			\draw[dverma] (32) -- (41);
			\draw[dverma] (33) -- (41);
			\draw[dverma] (34) -- (42);
			\draw[dverma] (34) -- (43);
			\draw[dverma] (43) -- (5);
			\draw[other] (22) -- (33);
			\draw[other] (33) -- (42);
			\node[nom] at (-6,5) {$\octpro{24}{\frac{3}{2}\rho}$};
		\end{tikzpicture}
	\\ \bigskip
	\begin{tikzpicture}[->, thick, xscale=1.1, yscale=0.72]
			\node (1) at (0,4) [] {$\octirr{1}{0}$};
			\node (21) at (-5,2) [] {$\octirr{\overline{3}}{\frac{3}{2}\rho}$};
			\node (22) at (-3,2) [] {$\octirr{3}{\frac{3}{2}\rho}$};
			\node (23) at (-1,2) [] {$\octirr{3}{\frac{3}{2}\rho}$};
			\node (24) at (1,2) [] {$\octirr{\overline{3}}{\frac{3}{2}\rho}$};
			\node (25) at (3,2) [] {$\octirr{\overline{3}}{\frac{3}{2}\rho}$};
			\node (26) at (5,2) [] {$\octirr{3}{\frac{3}{2}\rho}$};
			\node (31) at (-6,0) [] {$\octirr{1}{0}$};
			\node (32) at (-4,0) [] {$\octirr{1}{0}$};
			\node (33) at (-2,0) [] {$\octirr{1}{0}$};
			\node (34) at (0,0) [] {${\octirr{1}{0}}{}^{\oplus4}$};
			\node (35) at (2,0) [] {$\octirr{1}{0}$};
			\node (36) at (4,0) [] {$\octirr{1}{0}$};
			\node (37) at (6,0) [] {$\octirr{1}{0}$};
			\node (41) at (-5,-2) [] {$\octirr{\overline{3}}{\frac{3}{2}\rho}$};
			\node (42) at (-3,-2) [] {$\octirr{3}{\frac{3}{2}\rho}$};
			\node (43) at (-1,-2) [] {$\octirr{3}{\frac{3}{2}\rho}$};
			\node (44) at (1,-2) [] {$\octirr{\overline{3}}{\frac{3}{2}\rho}$};
			\node (45) at (3,-2) [] {$\octirr{\overline{3}}{\frac{3}{2}\rho}$};
			\node (46) at (5,-2) [] {$\octirr{3}{\frac{3}{2}\rho}$};
			\node (5) at (0,-4) [] {$\octirr{1}{0}$};
			\draw[verma] (1) -- (25);
			\draw[verma] (1) -- (26);
			\draw[verma] (25) -- (37);
			\draw[verma] (26) -- (37);
			\draw[verma] (21) -- (32);
			\draw[verma] (21) -- (34);
			\draw[verma] (32) -- (43);
			\draw[verma] (34) -- (43);
			\draw[verma] (22) -- (33);
			\draw[verma] (22) -- (34);
			\draw[verma] (33) -- (44);
			\draw[verma] (34) -- (44);
			\draw[verma] (23) -- (34);
			\draw[verma] (23) -- (35);
			\draw[verma] (34) -- (45);
			\draw[verma] (35) -- (45);
			\draw[verma] (24) -- (34);
			\draw[verma] (24) -- (36);
			\draw[verma] (34) -- (46);
			\draw[verma] (36) -- (46);
			\draw[verma] (31) -- (41);
			\draw[verma] (31) -- (42);
			\draw[verma] (41) -- (5);
			\draw[verma] (42) -- (5);
			\draw[dverma] (1) -- (21);
			\draw[dverma] (1) -- (22);
			\draw[dverma] (21) -- (31);
			\draw[dverma] (22) -- (31);
			\draw[dverma] (23) -- (32);
			\draw[dverma] (32) -- (41);
			\draw[dverma] (34) -- (41);
			\draw[dverma] (24) -- (33);
			\draw[dverma] (33) -- (42);
			\draw[dverma] (34) -- (42);
			\draw[dverma] (25) -- (34);
			\draw[dverma] (25) -- (35);
			\draw[dverma] (35) -- (43);
			\draw[dverma] (26) -- (34);
			\draw[dverma] (26) -- (36);
			\draw[dverma] (36) -- (44);
			\draw[dverma] (37) -- (45);
			\draw[dverma] (37) -- (46);
			\draw[dverma] (45) -- (5);
			\draw[dverma] (46) -- (5);
			\draw[other] (1) -- (23);
			\draw[other] (1) -- (24);
			\draw[other] (43) -- (5);
			\draw[other] (44) -- (5);
			\node[nom] at (-6,5) {$\octpro{48}{0}$};
		\end{tikzpicture}
	\caption{Conjectural Loewy diagrams for the $\octvoa$-modules $\octquo{9}{\frac{3}{2}\rho}$, $\octpro{24}{\frac{3}{2}\rho}$ and $\octpro{48}{0}$.} \label{fig:LoewyOct}
\end{figure}

Here, $\octpro{24}{\frac{3}{2}\rho}$, $\octpro{\overline{24}}{\frac{3}{2}\rho}$ and $\octpro{48}{0}$ are the (conjectural) projective covers of $\octirr{3}{\frac{3}{2}\rho}$, $\octirr{\overline{3}}{\frac{3}{2}\rho}$ and $\octirr{1}{0}$, respectively, while $\octquo{9}{\frac{3}{2}\rho}$ and $\octquo{\overline{9}}{\frac{3}{2}\rho}$ are quotients of $\octirr{3}{\frac{3}{2}\rho}$ and $\octirr{\overline{3}}{\frac{3}{2}\rho}$, respectively.

Loewy diagrams for $\octquo{9}{\frac{3}{2}\rho}$, $\octpro{24}{\frac{3}{2}\rho}$ and $\octpro{48}{0}$ are pictured in \cref{fig:LoewyOct}.
Those of $\octquo{\overline{9}}{\frac{3}{2}\rho}$ and $\octpro{\overline{24}}{\frac{3}{2}\rho}$ are obtained from their unbarred cousins by exchanging $3$ with $\overline{3}$ everywhere.  Those of the projective covers of the remaining irreducibles are then obtained by fusing each composition factor with whichever of the simple currents $\octirr{1}{\rho}$ or $\octirr{1}{-\rho}$ is appropriate.  Finally, the irreducibles $\octirr{8}{0}$, $\octirr{8}{\rho}$ and $\octirr{8}{-\rho}$ are their own projective covers.  The Virasoro zero-mode is expected to act with Jordan blocks of rank $3$ on $\octpro{24}{\frac{3}{2}\rho}$ and $\octpro{48}{0}$.

\footnotesize
\flushleft
%\bibliography{qg}
%\bibliographystyle{halpha} % customised labels (use this one)

%\bibliographystyle{alpha} % standard labels (don't use)

\providecommand{\opp}[2]{\textsf{arXiv:\mbox{#2}/#1}}
\providecommand{\pp}[2]{\textsf{arXiv:#1 [\mbox{#2}]}}

\end{document}